 \newtheorem{theorem}{Theorem}[section]
\newtheorem{cor}[theorem]{Corollary}
 \theoremstyle{definition}
\newtheorem{lemma}[theorem]{Lemma}
\theoremstyle{remark}
\numberwithin{equation}{section}
\DeclareMathOperator{\Hi}{O}
\DeclareMathOperator{\Imagine}{Im}
\DeclareMathOperator{\Real}{Re}
\begin{document}

\title{\vspace{-1.2cm} \bf Boundary asymptotics of the relative Bergman kernel metric for curves} 
\author{Robert Xin Dong}
\date{}
\maketitle

\begin{abstract}
We study the behaviors of the relative Bergman kernel metrics on holomorphic families of degenerating hyperelliptic Riemann surfaces and their Jacobian varieties. Near a node or cusp, we obtain precise asymptotic formulas with explicit coefficients. In general the Bergman kernels on a given cuspidal family do not always converge to that on the regular part of the limiting surface, which is different from the nodal case. It turns out that information on both the singularity and the complex structure contributes to various asymptotic behaviors of the Bergman kernel. Our method involves the classical Taylor expansion for Abelian differentials and period matrices. 
 \end{abstract}
 
\renewcommand{\thefootnote}{\fnsymbol{footnote}}
\footnotetext{\hspace*{-7mm} 
\begin{tabular}{@{}r@{}p{16.5cm}@{}}
& Keywords. Bergman kernel metric, cusp, degeneration, Hodge theory, hyperelliptic Riemann surface, Jacobian variety, node, Ohsawa-Takegoshi $L^2$ extension, period matrix, positivity of direct image\\
& Mathematics Subject Classification. Primary 32A25; Secondary 32G20, 32G15, 14H45, 32Q28, 14D06

\end{tabular}}

\section*{Introduction}

On an $n$-dimensional complex manifold $X$, the Bergman kernel is a reproducing kernel of the space of square integrable holomorphic top forms. For the canonical bundle $\mathcal K$, which is the $n$th exterior power of the cotangent bundle on $X$, the Bergman kernel is defined as  
 \begin{equation} \label{def} 
\kappa:=\sum e_j \otimes \overline {e_j},
 \end{equation} 
 where $\{e_j\}$ is a complete orthonormal basis of $H_{L^2}^0(X, \mathcal K)$. Thus, the Bergman kernel is a canonical volume-form determined only by the complex structure.  
 As the complex structure deforms, the log-plurisubharmonic variation results of the Bergman kernels on pseudoconvex domains were obtained by Maitani and Yamaguchi \cite{MY} and later by Berndtsson \cite{B1, B2}.
More generally, further important developments on general Stein manifolds and complex projective algebraic manifolds \cite{B2, Ts, BP, LY, PT, Ta, DWZZ} state certain positivity properties of the direct images of the relative canonical bundles, and turn out to have intimate connections with the space of K\"{a}hler metrics \cite{B3, DS, DS2, T}, the invariance of plurigenera \cite{Le, Si98, Si02, P07, BP2}, and the sharp Ohsawa-Takegoshi theorem \cite{OT, O95, Si, Bl13, GZ1, BL, Ca, GZ2, GZ3, Dem16}. Conversely, the Ohsawa-Takegoshi theorem with optimal constant was applied in \cite{GZ1, Ca} to prove the variation results of the Bergman kernels. 
 
\medskip

Consider the family 
$
\mathfrak{X} \xlongrightarrow {\pi} 
\Delta,
$
where $\mathfrak{X}$ is a complex $(n+1)$-dimensional Stein manifold fibred over the unit disc $\Delta\subset\mathbb{C}$, and $\pi$ is a surjective holomorphic map with connected fibers. Moreover, $\pi$ is a holomorphic submersion over $\Delta^*:=\Delta\setminus \{0\}$. For $\lambda\in \Delta^*$, the fibres $X_\lambda=\pi^{-1}(\lambda)$ are $n$-dimensional Stein manifolds, whose Bergman kernels on the diagonal are denoted by $\kappa_{X_\lambda}$. Let $(z^1, \ldots, z^n, \lambda)$ be a coordinate of $X_\lambda \times \Delta^* $, which is a local trivialization of the fibration, and write $\kappa_{X_\lambda}:=k_{\lambda}(z) (dz^1\wedge \cdots \wedge dz^n) \otimes  
(d\overline{z^1}\wedge \cdots \wedge d\overline{z^n})  $ locally. Then, the above mentioned positivity results imply that $\psi:=\log k_{\lambda}(z)$ is subharmonic
with respect to $ \lambda \in \Delta^*$, i.e.,
\begin{equation} \label{SH}
  \frac{\partial ^2 \psi }{\partial {\lambda} \partial {\bar \lambda} } \,  \geq 0,
\end{equation} 
if the Bergman kernel is not identically zero. Here as $\lambda$ varies, $e^\psi$ induces on $\mathcal K_{\mathfrak{X} /\Delta}$ the so-called relative Bergman kernel metric, which is represented by different local functions, given different local trivializations (see \cite{BP}).
When the fibration $\pi$ has a singular fiber $X_0$, our goal is to quantitatively characterize at degeneration the asymptotic of $\psi$, as $\lambda \to 0$.
  
\medskip

In fact, the study of the asymptotic analysis of period integrals -- and, in particular, the Abelian differentials on algebraic curves --  is a very classical subject and has been pursued by many experts in the field of Hodge theory \cite{K, KK, Zu, De,  V}, especially the nilpotent orbit theorem \cite{GS, S} in the variations of Hodge structures. In \cite{Gr}, the (pluri)subharmonicity in the base direction of the above $\psi$ was described as ``a pleasant surprise'' by Griffiths. It is known classically that the earlier works of Griffiths \cite{Gr0, Gr}, Fujita \cite{Fu}, as well as other important results in algebraic geometry including \cite{Kaw, Ko} played a decisive role in understanding the behavior of $\psi$. For more background on the $L^2$-analysis in several complex variables, and its interaction with Hodge theory, see \cite{B5, Dem12, O17, O18, P19, O20, O20!}.   

\medskip

In the theory of Riemann surfaces and their moduli space, the degenerations of 
analytic differentials have also been extensively studied via a general method called the pinching coordinate (see \cite{F, Y, Ma} and the references therein). 
The spaces of degenerating Riemannn surfaces correspond to paths in the moduli space leading to the boundary points, and are obtained from compact surfaces by shrinking finitely many closed loops to points, called nodes. Near nodal singularities of general curves, Wentworth \cite{We} obtained precise estimates for the Arakelov metric with applications to arithmetic geometry and string theory (see also \cite{dJ, J}); Habermann and Jost \cite{HJ, HJ2} studied the behavior of the Bergman kernels and their induced $L^2$ metrics on Teichm\"{u}ller spaces, with a strong motivation in minimal surface theory. Specifically, the result in \cite{HJ} shows that the Bergman kernels on a degenerating family of compact Riemann surfaces converge to that on the normalization of the limiting nodal curve, with the second term having asymptotic growth $(-\log|\lambda|)^{-1}$, as $\lambda \to 0$.

\medskip

On the other hand, there exist worse singularities such as cusps, with which the pinching coordinate method cannot deal. Boucksom and Jonsson \cite{BJ17} studied the asymptotics of volume forms on degenerating compact manifolds and established a measure-theoretic version of the Kontsevich-Soibelman conjecture, which deals with the limiting behavior of a family of Calabi-Yau manifolds approaching a ``cusp'' in the moduli space boundary. In \cite{D1, D2, D3, D4}, the author obtained quantitative results for the Bergman kernels on Legendre and other degenerating families of elliptic curves, by using elliptic functions as well as a method based on the Taylor series expansion of Abelian differentials and period matrices (see \cite{CMSP}).

 \medskip

In this paper, for the fibers $X_\lambda$ being algebraic curves or their Jacobian varieties, we determine the precise asymptotic behaviors of $\psi$ at degeneration, using the classical Taylor expansion method.
Our first motivation is to write down explicitly the asymptotic coefficients, which involve the complex structure information and reflect the geometry of the base varieties and their singularities, 
for various families of hyperelliptic curves degenerating to singular ones with nodes or cusps.
Our second motivation is to investigate whether similar convergence results hold true for the Bergman kernel on cuspidal degenerating families of curves, in comparison with \cite{HJ, HJ2, We, BJ17, Shi}. It turns out that this is not always the case as seen in Theorem \ref{potential-g-cuspI}.
However, in Theorem \ref{potential-g-cuspII} we find that the Bergman kernels on some cuspidal families of curves indeed converge at degeneration. As a last motivation, we apply the results on higher genus curves to the study of their Jacobians, and generalize the results in \cite{D1, D2, D4} toward higher dimensions.

 \medskip

The organization of this paper is as follows. 
In Section 1, we state our main results on the degeneration of the Bergman kernel. In Section 2, we study the Bergman kernel on the normalization of general algebraic curves. In Sections 3--4, we work on nodal curves. In Sections 5--6 and 7--8, we work on cuspidal curves of types I and II, respectively. Our results on the Jacobian varieties are proved in Section 9.

\section{Main results} 

Throughout this paper, a smooth algebraic curve means a Riemann surface, and we consider the complex analytic families of hyperelliptic curves $X_{\lambda}:=\{(x,y)\in \mathbb C^2\, |\,y^2=h_{\lambda}(x)P(x)\}$ of genus $g\geq 2$,  parameterized by $\lambda \in \Delta^*$.
Here, $P$ denotes a polynomial of degree $2g- 2$ with complex roots $a_j$ such that $1<|a_1|<|a_2|<\cdots<|a_{2g- 2}|$. 
For each $\lambda \in \Delta^*$, $h_{\lambda}$ is a degree $3$ polynomial with distinct roots of absolute values less than or equal to $1$. It is well known that on the smooth curve $X_{\lambda}$ there exist  globally defined Abelian differentials
\begin{equation} \label{basis}
\omega_i:=\frac{x^{i-1}dx}{y}, \, i=1,\ldots,g.
\end{equation}
 By its classical construction, the hyperelliptic curve $X_{\lambda}$ can be realized as a 2-sheeted ramified covering of the Riemann sphere $\mathbb P^1$ (see \cite{CMSP}). Take a homology basis $\delta_i$ and $\gamma_j$ of $H_1(X_{\lambda}, \mathbb Z)$ such that their intersection numbers are $\delta_i\cdot \delta_j=0= \gamma_i\cdot \gamma_j$, and $\delta_i\cdot\gamma_j=\delta_{i,j}=-\gamma_j \cdot \delta_i$ (here $\delta_{i,j}$ is the Kronecker $\delta$). 
Then, for the above $\omega_i \in H^0(X_{\lambda}, \mathcal K)$, we set
\begin{equation} \label{homology basis}
A_{i, j}:=\int_{\delta_j}\omega_i, \quad B_{i, j}:=\int_{\delta_j}\gamma_i.
\end{equation}
By the Hodge-Riemann bilinear relation,  the matrix $(A_{i j})_{1\leq i,j\leq g}$ is invertible and define the normalized period matrix $Z:=A^{-1}B$. Then, $Z$ is symmetric and has a positive definite imaginary part, i.e., $\Imagine Z >0.$ With respect to the chosen homology basis, we call the matrices $A, B$ and $Z$, the $A$-period matrix, $B$-period matrix, and normalized period matrix, respectively.
 Moreover,  we use the symbol ``$\sim$'' to denote that the ratio of its both sides tends to  $1$, as  $\lambda \to 0$; in the case of matrices,   it refers to the entrywise ratio.

\paragraph{Nodal cases} 
In affine coordinates, consider a family of hyperelliptic curves 
\begin{equation} \label{X-node}
X_{\lambda}:=\{(x,y)\in \mathbb C^2\, |\,y^2=x(x-{\lambda})(x-1) P(x)\}.
\end{equation} 
As $\lambda \to 0$, $X_{\lambda}$ degenerates to a singular curve $X_0$ with a non-separating node. 
 The normalization of $X_0$ is a smooth curve $Y:=\{(x,y)\in \mathbb C^2\, |\,y^2=(x-1)P(x)\}$, whose $A$-period matrix and normalized period matrix are denoted by $A_0$ and $Z_0$, respectively. By the $L^2$ removable singularity theorem, $Y$ and the regular part of $X_0$ have the same Bergman kernel, denoted by $\kappa_{0}$.
We write $\kappa_{X_\lambda}=k_{\lambda}(z)dz\otimes  
d\bar{z}$ and $\kappa_{0}=k_0(z)dz\otimes  
d\bar{z}$ in the local coordinate $z:=\sqrt{x}$ near $(0, 0)$; see their precise formulas in \eqref{Bergman-g-node}  and  \eqref{Bergman-reg-node}.

\begin{theorem} [Nodal cases] \label{potential-g-node}  
For $X_{\lambda}$ defined by \eqref{X-node}, as $\lambda\rightarrow 0$, it holds that
\begin{itemize}
\item [$(i)$] $\kappa_{X_\lambda}  \to \kappa_{0}$;

\item [$(ii)$] for $0\neq |z|$ small, 
$$
\psi - \log  {k}_0(z)   \sim     \frac{\pi}{-\log |\lambda|}
\frac{ { 1- 2 \Real  \sum _{i=1}^{g-1}   \left((\Imagine  Z_0 )^{-1}   \Imagine (A_0^{-1} \star)  \right)_{i} {z}^{2i}}{}}{  \sum _{i,j=1}^{g-1}  ((\Imagine  Z_0 )^{-1})_{i, j}   ( z^{i} \overline{z}^{j})^2},
$$
where $\star$ is a column vector with $g-1$ rows whose entries all equal $ {-2i\sqrt{P(0)}^{-1}}{}$.

\end{itemize}
  \end{theorem}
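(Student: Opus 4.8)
The plan is to compute the Bergman kernel $\kappa_{X_\lambda}$ explicitly in terms of the period matrix data and then Taylor-expand everything in $\lambda$ near $0$. First I would recall that on a genus-$g$ compact Riemann surface the Bergman kernel of the canonical bundle is $\kappa_{X_\lambda} = \sum_{i,j} \left((\operatorname{Im} Z)^{-1}\right)_{i,j}\, \eta_i \otimes \overline{\eta_j}$, where $\{\eta_i\}$ is the basis of normalized Abelian differentials, i.e. $\eta = A^{-1}\omega$ with $\omega = (\omega_1,\dots,\omega_g)^{T}$ the standard basis from \eqref{basis}. Thus in the local coordinate $z = \sqrt{x}$ near the node one has $k_\lambda(z) = \sum_{i,j}\left((\operatorname{Im} Z)^{-1}\right)_{i,j}\, f_i(z)\overline{f_j(z)}$, where $f_i$ are the local coefficient functions of the $\eta_i$ with respect to $dz$. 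The whole problem then reduces to understanding the asymptotics, as $\lambda\to 0$, of (a) the $A$-period matrix $A = A(\lambda)$, (b) the normalized period matrix $Z = Z(\lambda)$, and (c) the local expressions $f_i(z)$ near $z=0$; the formula for $\kappa_0$ quoted as \eqref{reg-g-node} should come out of the same computation with the last $(g-1)\times(g-1)$ block in the $\lambda\to 0$ limit.

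The key technical input, which I expect is established in Section 2 and the preceding part of Section 3 or cited from \cite{CMSP}, is the classical fact that as $\lambda\to 0$ the curve $X_\lambda$ acquires a non-separating node, so one of the $A$-cycles (say $\delta_g$) pinches, and the corresponding period integral $\int_{\delta_g}\eta_g$ has a logarithmic divergence: $Z_{gg} \sim \frac{1}{2\pi i}\log\lambda + O(1)$ while the other entries $Z_{ij}$ for $i,j\le g-1$ converge to the entries of $Z_0$, the period matrix of the normalization $Y$. Concretely, near the node the local computation of the vanishing cycle uses $y^2 = x(x-\lambda)(x-1)P(x)$, and the substitution $z=\sqrt x$ together with the residue/monodromy of $\frac{dx}{\sqrt{x(x-\lambda)}}$ produces the $-\log|\lambda|$ behavior, with the coefficient $\sqrt{P(0)}^{-1}$ (up to the $(x-1)$ factor evaluated at $0$, which gives the $-1$) entering through $y \approx \sqrt{-P(0)}\,x\cdot$(correction) near $x=0$, explaining the vector $\star$ with entries $-2i\sqrt{P(0)}^{-1}$. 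Then I would invert the $g\times g$ matrix $\operatorname{Im} Z$ in block form: writing $\operatorname{Im} Z = \begin{pmatrix} M & v \\ v^{T} & m\end{pmatrix}$ with $M \to \operatorname{Im} Z_0$ and $m \sim \frac{-\log|\lambda|}{2\pi}\to\infty$, the Schur-complement expansion gives $(\operatorname{Im} Z)^{-1} = \begin{pmatrix} M^{-1} & 0 \\ 0 & 0\end{pmatrix} + \frac{1}{m}\begin{pmatrix} M^{-1}vv^{T}M^{-1} & -M^{-1}v \\ -v^{T}M^{-1} & 1\end{pmatrix} + O(m^{-2})$. Substituting this into the kernel formula, the leading $M^{-1}$-block reproduces exactly $k_0(z)$ (proving part $(i)$, $\kappa_{X_\lambda}\to\kappa_0$, at least pointwise on $0\ne|z|$ small — and uniformly on compacta, which is what the statement needs), and the $\frac1m$-correction is precisely the numerator-over-denominator expression displayed in $(ii)$, once one identifies the relevant entries: the denominator $\sum_{i,j=1}^{g-1}((\operatorname{Im} Z_0)^{-1})_{ij}(z^i\bar z^j)^2$ is $k_0(z)$ up to the local normalization, and the numerator involves the cross-terms between the $g$-th differential (whose local leading behavior on $z$ near the node gives the "$1$" and, through $A_0^{-1}\star$, the $z^{2i}$ terms) and the remaining ones. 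The factor $2i$ in the exponents of $z$ is the Jacobian $dx = 2z\,dz$ from the coordinate change, and the $\operatorname{Re}$ appears because the kernel is a sum over conjugate pairs $\eta_i\otimes\overline{\eta_g} + \eta_g\otimes\overline{\eta_i}$.

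The main obstacle — and the part requiring genuine care rather than bookkeeping — is pinning down the \emph{exact} constants and the precise next-order term in the period asymptotics, not merely the leading $\log$: one needs $Z_{gg} = \frac{1}{2\pi i}\log\lambda + c + o(1)$ and, more importantly, the off-diagonal entries $Z_{ig}$ ($i\le g-1$) to sufficient precision, together with the local Taylor expansion of $\eta_g$ (equivalently of the row $A^{-1}$ acting on $\omega$) near $z=0$, because these feed directly into the vector $A_0^{-1}\star$ and hence into the numerator's $z^{2i}$-coefficients. I would carry this out by: (1) expanding $y = \sqrt{x(x-\lambda)(x-1)P(x)}$ and hence each $\omega_i = x^{i-1}dx/y$ as a series in $x$ near $0$ and separately away from the node; (2) computing the $A$-period integrals over the non-vanishing cycles by deforming into the limit $\lambda=0$ (dominated convergence / holomorphic dependence on $\lambda$) and the vanishing-cycle integral by an explicit residue computation of $\oint \frac{x^{i-1}dx}{\sqrt{x(x-\lambda)}}\cdot(\text{holomorphic})$; (3) assembling $A(\lambda)$, inverting it, forming $Z=A^{-1}B$, and extracting $\operatorname{Im} Z$; (4) doing the block inversion above and the coordinate change $z=\sqrt x$; and (5) matching terms. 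Steps (1)–(2) are where the characteristic quantities $\sqrt{P(0)}$ and the structure of $\star$ are born, and getting their signs and factors of $2$, $i$, and $\pi$ correct is the delicate point; everything after that is linear algebra and Taylor expansion. The hypotheses $1<|a_1|<\dots$ and $\deg P = 2g-2$, together with $h_\lambda$ having roots in the unit disc, guarantee the node forms at $x=0$ in a controlled way, so the expansions converge on the punctured disc $0<|z|$ small as claimed.
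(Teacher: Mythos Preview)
Your plan is correct and matches the paper's approach almost exactly: compute the asymptotics of the $A$- and $B$-period matrices separately (the paper's Lemmata \ref{period-A-hyp-node} and the one following it), form $Z=A^{-1}B$, block-invert $\Imagine Z$ via the Schur complement as $m\sim -\pi^{-1}\log|\lambda|\to\infty$, and substitute into the explicit Bergman-kernel formula \eqref{Bergman-g-node} in the coordinate $z=\sqrt x$. The only cosmetic differences are that the paper places the pinching cycle at index $1$ rather than $g$, works throughout with the \emph{unnormalized} basis $\omega_i$ (so the local coefficients are simply the monomials $z^{2(i-1)}$ and no separate expansion of your $f_i$ is needed), and warms up with the genus-two case (Theorem \ref{potential-2-node}) before the general hyperelliptic argument.
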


Part $(i)$ is essentially due to Habermann and Jost \cite{HJ}, who in fact have used the pinching coordinate method to study general (possibly non-hyperelliptic) algebraic curves degenerating to singular ones with separating or non-separating nodes.
To write down the explicit asymptotic coefficients in Part $(ii)$, we rely on the study of genus two curves in our Theorem \ref{potential-2-node}.

\paragraph{Cuspidal cases} Since in \cite{F, Y, Ma, We, HJ, HJ2, J} only the nodal degeneration was considered, we continue investigating whether similar convergence results hold true for the Bergman kernel on cuspidal degenerating families of curves, and find that in general this is not the case. 
For the singular curve $X_{0}:=\{(x,y)\in \mathbb C^2\, |\,y^2=x^3 P(x)\}$ with an ordinary cusp at $(0, 0)$, its normalization is the smooth curve $Y:=\{(x,y)\in \mathbb C^2\, |\,y^2=xP(x)\}$, whose $A$-period matrix and normalized period matrix are denoted by $A_0$ and $Z_0$, respectively. Then, $Y$ and the regular part of $X_0$ have the same Bergman kernel, denoted by $\kappa_{0}$. In the following two cases, we explore different  families of hyperelliptic curves $X_{\lambda}$ that give rise to $X_{0}$, as $\lambda\rightarrow 0$.
Similarly, we write $\kappa_{X_\lambda}=k_{\lambda}(z)dz\otimes  
d\bar{z}$ and $ \kappa_{0}=k_0(z)dz\otimes  
d\bar{z}$ in the local coordinate $z:=\sqrt{x}$ near $(0, 0)$; see their precise formulas in Section 2.

 \begin{theorem} [Cuspidal cases, I] \label{potential-g-cuspI} 
For the hyperelliptic curves 
\begin{equation} \label{X-cuspI}
X_{\lambda}:=\{(x,y)\in \mathbb C^2\, |\,y^2=x(x^2-{\lambda})  P(x)\},
\end{equation} 
 as $\lambda\rightarrow 0$, for $0\neq |z|$ small, it holds that
\begin{itemize}
\item [$(i)$]  $
   \quad {k}_{\lambda}(z) \to   {k}_0(z) + 4 |  z^{ 4} P(z^2)|^{-1} , \quad i.e., \,
\kappa_{X_\lambda}  \not \to \kappa_0;
$
\end{itemize}
 $$
  \noindent{}{ {(ii)}    } \hspace{1.5cm} \psi - \log  \left(  {k}_0(z) +\frac{4}{|  z^4 P(z^2)|} \right)   \sim   \frac{- 2 \Real  \sum_{i=1}^{g-1}   \left((\Imagine  Z_0 )^{-1}   \Imagine (A_0^{-1} (\diamondsuit-\sqrt{-1} \diamond))  \right)_{i} {z}^{2i} }{1  +\sum _{i,j=1}^{g-1}  ((\Imagine  Z_0 )^{-1})_{i, j}   ( z^{i} \overline{z}^{j})^2},
 $$
where $\diamond$ and $\diamondsuit$ are column vectors\footnote{See their definitions in Lemmata \ref{period-A-hyp-cuspI}  and \ref{period-B-hyp-cuspI}} and with $g-1$ rows involving $\lambda$.

  \end{theorem}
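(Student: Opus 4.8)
The plan is to express $k_\lambda(z)$ through the normalized period matrix and the local Taylor data of the Abelian differentials, and then to feed in the $\lambda$-expansions of the periods; parts $(i)$ and $(ii)$ are the zeroth- and first-order terms of the same expansion.

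First I would reduce to a period formula. Each affine fibre $X_\lambda$ is smooth, so by the $L^2$ removable singularity theorem $H^0_{L^2}(X_\lambda,\mathcal K)=\operatorname{span}\{\omega_1,\dots,\omega_g\}$; writing the normalized basis as $\eta=A^{-1}\omega$ and using the Riemann bilinear relations $\langle\eta_i,\eta_j\rangle=(\Imagine Z)_{ij}$ gives $\kappa_{X_\lambda}=\sum_{i,j}((\Imagine Z)^{-1})_{ij}\,\eta_i\otimes\bar\eta_j$, hence in the coordinate $z=\sqrt x$, $k_\lambda(z)=\sum_{i,j}((\Imagine Z)^{-1})_{ij}\,\eta_i(z)\overline{\eta_j(z)}$ with $\eta_i(z)=\sum_k (A^{-1})_{ik}f_k(z)$ and $f_k(z)=2z^{2k-2}/\sqrt{(z^4-\lambda)P(z^2)}$. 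The identical formula holds for $\kappa_0$ with the $(g-1)\times(g-1)$ data $A_0,Z_0$ of $Y$ and $f^{(0)}_k(z)=2z^{2k-2}/\sqrt{P(z^2)}$. Thus both statements follow from (a) the elementary $\lambda$-expansion, for fixed $0<|z|$ small, $f_1(z)=2z^{-2}P(z^2)^{-1/2}\bigl(1+\frac{\lambda}{2z^4}+\cdots\bigr)$ and $f_k(z)=f^{(0)}_{k-1}(z)\bigl(1+\frac{\lambda}{2z^4}+\cdots\bigr)$ for $k\ge 2$, together with (b) the $\lambda$-asymptotics of $A,B$, hence of $\Imagine Z$ and $(\Imagine Z)^{-1}$.

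Step two is the period asymptotics, where the classical Taylor-expansion method enters and Lemmata \ref{period-A-hyp-cuspI} and \ref{period-B-hyp-cuspI} (defining $*$ and $\diamondsuit$) are produced. I would fix the symplectic basis adapted to the cuspidal degeneration so that a single cut near the origin — necessarily of diameter comparable to $|\sqrt\lambda|$, since the triple $0,\pm\sqrt\lambda$ cannot be joined across the branch point $0$ — carries the vanishing cycle. Pulling the period integrals back to the $x$-plane and, on the cycles concentrating near $0$, substituting $x=\sqrt\lambda\,u$ turns the local curve into the fixed model $v^2=u(u^2-1)$; reading off leading terms shows that the row and column of $A,B$ attached to the vanishing cycle carry the fractional powers $\lambda^{i/2-3/4}$ (in place of the logarithm that governs the nodal case, Theorem \ref{potential-g-node}), the remaining $(g-1)\times(g-1)$ blocks converge up to $O(\lambda)$ to $A_0,B_0$, and the first row converges to the periods of the limiting second-kind differential $2z^{-2}P(z^2)^{-1/2}\,dz$ on $Y$; the genus-two analysis then pins down the entries of $*$ and $\diamondsuit$ explicitly. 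Inverting these block matrices by Schur complements — choosing branches so that $\Imagine Z>0$ is visibly preserved — yields the asymptotics of $A^{-1}$, $\Imagine Z$ and $(\Imagine Z)^{-1}$ in the adapted basis.

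Step three assembles the pieces. Substituting into $k_\lambda(z)=\sum_{i,j}((\Imagine Z)^{-1})_{ij}\eta_i(z)\overline{\eta_j(z)}$ and splitting off the index attached to the vanishing cycle, the remaining $(g-1)\times(g-1)$ block converges entrywise to the corresponding block of $\kappa_0$, producing $k_0(z)$; the vanishing-cycle contribution — which, unlike for a node, does not die, because the rescaled differential on $v^2=u(u^2-1)$ does not degenerate — converges to $|f_1(z)|^2|_{\lambda=0}=4|z^4P(z^2)|^{-1}$, and the cross terms are of lower order, proving $(i)$. For $(ii)$ I would keep the first-order $\lambda$-corrections throughout: since $\psi-\log\!\bigl(k_0(z)+4|z^4P(z^2)|^{-1}\bigr)=\log\!\bigl(k_\lambda(z)/(k_0(z)+4|z^4P(z^2)|^{-1})\bigr)\sim\bigl(k_\lambda(z)-k_0(z)-4|z^4P(z^2)|^{-1}\bigr)\big/\bigl(k_0(z)+4|z^4P(z^2)|^{-1}\bigr)$, the numerator's leading term is exactly the combination of $(\Imagine Z_0)^{-1}$, $A_0^{-1}$ and $\diamondsuit-\sqrt{-1}\,*$ coming from the first-order corrections to $\Imagine Z$ and to the $\eta_i(z)$, while clearing the factor $4|z^4P(z^2)|^{-1}$ from the denominator turns it into $1+\sum_{i,j}((\Imagine Z_0)^{-1})_{ij}(z^i\bar z^j)^2$, giving the displayed asymptotic.

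The main obstacle is Step two near the cusp. The cycles and differentials supported on the $\sqrt\lambda$-scale interact with the outer region through an intermediate matching zone, so the period integrals are genuinely of matched-asymptotics type; one must show that the substitution $x=\sqrt\lambda\,u$ captures the leading term, identify the non-vanishing limit contribution (in contrast with the node), and extract the exact constants — the coefficient $4$ and the precise entries of $*$ and $\diamondsuit$ — all while controlling the remainders uniformly for $0<|z|$ small and keeping the branch bookkeeping consistent.
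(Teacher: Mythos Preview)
Your overall route --- compute the period asymptotics of Lemmata~\ref{period-A-hyp-cuspI} and~\ref{period-B-hyp-cuspI} via the substitution $x=\sqrt\lambda\,u$ (reducing the local picture to the fixed model $v^2=u(u^2-1)$), block-invert to obtain $(\Imagine Z)^{-1}$, and then substitute --- is exactly the paper's, and your observation that the $(1,1)$ entry of $(\Imagine Z)^{-1}$ remains bounded here (rather than dying like $(-\log|\lambda|)^{-1}$ as in the nodal case) is precisely the mechanism behind~$(i)$.

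There is, however, a genuine gap in your Step~3. The paper never passes to the normalized basis $\eta=A^{-1}\omega$; it works directly with the expression \eqref{Bergman-g-cuspI}, namely $k_\lambda(z)=\sum_{i,j}((\Imagine Z)^{-1})_{ij}\,f_i(z)\overline{f_j(z)}$ in your notation. In that expression the $(1,1)$ summand is $((\Imagine Z)^{-1})_{11}|f_1|^2\to 1\cdot 4|z^4P(z^2)|^{-1}$, producing the extra term at once, while the block with $i,j\ge 2$ yields $k_0(z)$ via \eqref{reg-g-cusp}. In your formulation with $\eta_i$, by contrast, the first row of $A^{-1}$ is $\Hi(\lambda^{1/4})$ --- this is exactly the content of Lemma~\ref{period-A-hyp-cuspI}, since $A_{11}\sim c_5\lambda^{-1/4}/\sqrt{P(0)}$ forces $(A^{-1})_{1\bullet}=\Hi(\lambda^{1/4})$ --- and hence $\eta_1(z)=\sum_k(A^{-1})_{1k}f_k(z)=\Hi(\lambda^{1/4})\to 0$ pointwise. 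Your heuristic ``the rescaled differential on $v^2=u(u^2-1)$ does not degenerate'' is correct for the unnormalized $\omega_1$ but fails for $\eta_1$; consequently the term $((\Imagine Z)^{-1})_{11}|\eta_1|^2$ tends to $0$, not to $|f_1|^2|_{\lambda=0}$, and your Step~3 as written does not deliver the extra $4|z^4P(z^2)|^{-1}$. To follow the paper, drop the normalization and argue directly from \eqref{Bergman-g-cuspI}; the asymptotic in~$(ii)$ then drops out from the off-diagonal block $(\Imagine Z_0)^{-1}\Imagine(A_0^{-1}(\diamondsuit-\sqrt{-1}*))$ of $(\Imagine Z)^{-1}$ with no further algebra involving $A^{-1}$ acting on the $f_k$.
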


\begin{theorem} [Cuspidal cases, II]  \label{potential-g-cuspII}   
For the hyperelliptic curves 
\begin{equation} \label{X-cuspII}
X_{\lambda}:=\{(x,y)\in \mathbb C^2\, |\,y^2=x(x-{\lambda})(x-{\lambda^2}) P(x)\},
\end{equation} 
as $\lambda\rightarrow 0$, it holds that
\begin{itemize}

\item [$(i)$] $\kappa_{X_\lambda}  \to \kappa_{0}$;

\item [$(ii)$] for $0\neq |z|$ small, 
 $$
\psi - \log {k}_0(z) \sim   \frac{4\pi}{{k}_0(z) |z^4 P(z^2)|}  \cdot  \frac{1}{-\log |\lambda|}.
 $$

  \end{itemize}
  \end{theorem}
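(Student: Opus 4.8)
The plan is to compute $\kappa_{X_\lambda}$ from the period matrix and to reduce everything to the $\lambda\to 0$ asymptotics of the periods of $\omega_i=x^{i-1}dx/y$. By the Hodge--Riemann bilinear relations, if $\eta=A_\lambda^{-1}\omega$ is the normalized basis ($\int_{\delta_j}\eta_i=\delta_{i,j}$, $\int_{\gamma_j}\eta_i=(Z_\lambda)_{i,j}$), then the Gram matrix of $\{\eta_i\}$ for the $L^2$ inner product $\langle\alpha,\beta\rangle=\frac{i}{2}\int\alpha\wedge\bar\beta$ equals $\Imagine Z_\lambda$, so that
\[
\kappa_{X_\lambda}=\sum_{i,j=1}^{g}\bigl((\Imagine Z_\lambda)^{-1}\bigr)_{i,j}\,\eta_i\otimes\overline{\eta_j},
\]
and likewise $\kappa_0=\sum_{i,j=1}^{g-1}((\Imagine Z_0)^{-1})_{i,j}\,\eta_i^{0}\otimes\overline{\eta_j^{0}}$ on $Y$. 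In the coordinate $z=\sqrt{x}$ one has $\omega_i=\frac{2z^{2i-2}}{\sqrt{(z^2-\lambda)(z^2-\lambda^2)P(z^2)}}\,dz$; writing $\eta_i=\phi_i(z)\,dz$, part $(ii)$ thus becomes a statement about the entries of $(\Imagine Z_\lambda)^{-1}$ together with the (elementary) limits of the $\phi_i(z)$.

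First I would fix a symplectic basis $\{\delta_i,\gamma_i\}$ of $H_1(X_\lambda,\mathbb{Z})$ adapted to the degeneration: a distinguished pair $(\delta_g,\gamma_g)$ with $\delta_g$ a vanishing cycle surrounding the part of the collapsing cluster $\{0,\lambda,\lambda^2\}$ that disappears into the cusp, and $\delta_1,\dots,\delta_{g-1},\gamma_1,\dots,\gamma_{g-1}$ supported away from the cluster and tending to a symplectic basis of $H_1(Y,\mathbb{Z})$. Then I would Taylor-expand $\omega_i=x^{i-1}dx/y$ in $x$ and in $\lambda$ and evaluate the periods, as in \cite{CMSP, D1, D2, D4}. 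On the stable cycles the periods converge to those on $Y$ --- note that $\omega_i$ limits to the holomorphic differential $x^{i-2}dx/y_Y$ on $Y$ for $i\ge 2$, and to the second-kind differential $x^{-1}dx/y_Y$ (a double pole with no residue at the point over the cusp) for $i=1$ --- so that $A_\lambda$ stays invertible with bounded inverse and the lower-right $(g-1)\times(g-1)$ block of $Z_\lambda$ tends to (an augmentation of) $Z_0$. The distinguished cycle produces the only unbounded entry: performing the period integral scale by scale along $|\lambda^2|\ll|\lambda|\ll 1$ --- this is the delicate point, and the reason the outcome differs from the single-scale cluster $\{0,\pm\sqrt{\lambda}\}$ of Theorem~\ref{potential-g-cuspI} --- one finds that $(Z_\lambda)_{g,g}$ grows like $\tfrac{1}{\pi i}\log\lambda$, hence $\Imagine(Z_\lambda)_{g,g}\sim\tfrac{1}{\pi}(-\log|\lambda|)\to+\infty$, while $(Z_\lambda)_{g,j}$ stays bounded for $j<g$.

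Finally, a Schur-complement computation gives $\bigl((\Imagine Z_\lambda)^{-1}\bigr)_{g,g}\sim\frac{\pi}{-\log|\lambda|}$, the remaining entries of the last row and column tending to $0$, and the remaining block tending to $(\Imagine Z_0)^{-1}$. Substituting into the displayed formula and passing to the coordinate $z=\sqrt{x}$, the block contribution reproduces $\kappa_0$, which already yields part $(i)$; the leading correction comes from the $(g,g)$ term, and since one checks $\phi_g(z)\to\frac{2}{z^2\sqrt{P(z^2)}}$, hence $|\phi_g(z)|^2\to\frac{4}{|z^4P(z^2)|}$, one gets
\[
k_\lambda(z)-k_0(z)\sim\bigl((\Imagine Z_\lambda)^{-1}\bigr)_{g,g}\,|\phi_g(z)|^2\sim\frac{4\pi}{|z^4P(z^2)|}\cdot\frac{1}{-\log|\lambda|},
\]
and dividing by $k_0(z)$ and taking logarithms gives the asserted asymptotic for $\psi-\log k_0(z)$. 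The main obstacle is the middle step: proving that exactly one pair of periods becomes logarithmically unbounded with the sharp constant $1/(\pi i)$ while the other $2(g-1)$ periods converge and $A_\lambda$ remains non-degenerate --- i.e.\ making the scale-by-scale asymptotic analysis of the period integrals rigorous in the presence of three branch points collapsing at the rates $1,\lambda,\lambda^2$. This is exactly where a cusp is harder than a node and where cusp type~II separates from cusp type~I.
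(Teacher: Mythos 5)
Your overall strategy---compute the $\lambda\to0$ asymptotics of the $A$- and $B$-periods with a symplectic basis adapted to the collapsing cluster $\{0,\lambda,\lambda^2\}$, identify the single logarithmically divergent diagonal entry of $\Imagine Z_\lambda$ coming from the vanishing pair of cycles, and invert by a Schur complement/block inversion---is exactly the route taken in Section 8 of the paper (Lemma \ref{period-A-hyp-cuspII} and the companion lemma for $B$), including the two-scale computation that yields $(Z_\lambda)_{gg}\sim -\tfrac{\sqrt{-1}}{\pi}\log\lambda$ and $((\Imagine Z_\lambda)^{-1})_{gg}\sim\pi/(-\log|\lambda|)$. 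Part $(i)$ and the shape of the argument are fine.

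There is, however, a genuine gap in your last step, and it sits precisely at the feature that distinguishes this cusp from a node. You expand the kernel in the normalized basis $\eta=A_\lambda^{-1}\omega$ and then assert $\phi_g(z)\to 2/(z^2\sqrt{P(z^2)})$. That is the limit of the coefficient of the \emph{unnormalized} $\omega_1=dx/y$; the normalized $\eta_g$ carries the factor $(A_\lambda^{-1})_{gg}$, and for this family the $A$-period over the vanishing cycle is not a bounded residue (as in the nodal case, where it tends to $2\pi/\sqrt{P(0)}\neq0$) but diverges like $2\pi/\sqrt{\lambda P(0)}$ (Lemma \ref{period-A-hyp-cuspII}). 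Consequently $(A_\lambda^{-1})_{gg}\sim\sqrt{\lambda P(0)}/(2\pi)\to0$, so $\eta_g=\Hi(\lambda^{1/2})$ pointwise for fixed $z\neq0$, and your leading correction $((\Imagine Z_\lambda)^{-1})_{gg}\,|\phi_g(z)|^2$ is of order $\lambda/(-\log|\lambda|)$, not $(-\log|\lambda|)^{-1}$. Your remark that ``$A_\lambda$ stays invertible with bounded inverse'' is true but conceals exactly this vanishing of the relevant entry of $A_\lambda^{-1}$. To arrive at the stated formula one must work with the expansion \eqref{Bergman-g-cuspII} used in the paper, where $((\Imagine Z)^{-1})_{1,1}\sim\pi/(-\log|\lambda|)$ multiplies $|\omega_1|^2\to 4/|z^4P(z^2)|$ directly; note that this expansion and your normalized one differ by conjugation by $A_\lambda$, so they are not interchangeable, and here the discrepancy changes the order of the second term. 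As written, your computation does not produce the asserted asymptotics, so you need to either adopt the paper's convention consistently from the start or redo the bookkeeping of which entry of the Gram-matrix inverse actually dominates in the normalized frame.
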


 \noindent{}{\bf Remarks.}

\begin{enumerate}[label=(\roman*)]
 
\item [(a)]

In Theorem \ref{potential-g-cuspI}, $(ii)$, the right hand side in the expansion of $\psi $ is harmonic in $\lambda$ of order $\Hi(\lambda^{1/4})$, which is different from Theorems \ref{potential-g-node} and \ref{potential-g-cuspII} where the growth order $(-\log|\lambda|)^{-1}$ appears. The harmonicity in the Bergman kernel variation detects the triviality of   holomorphic fibrations, with applications to the Suita conjecture (see \cite{B4, BL, D6, DT,  DTZ, Su}).

 \item  [(b)]
The proofs of Theorems \ref{potential-g-cuspI} and \ref{potential-g-cuspII} rely on the proofs of Theorems \ref{potential-2-cuspI} and \ref{potential-2-cuspII}, respectively, for genus-two curves with more explicit asymptotic coefficients when $P=(x-a)(x-b)$. In particular, Theorem \ref{potential-2-cuspII} says that the right hand side in Theorem \ref{potential-g-cuspII}, $(ii)$, reduces to 
$$
\frac{\pi \cdot \Imagine  \tau    }{-\log|\lambda|\cdot |z|^4},
 $$
 where $\tau$ is the period (scalar) of the elliptic curve $\{(x,y)\in \mathbb C^2\, |\,y^2= x (x-a)(x-b)\}.$

  \item [(c)]  In Theorems \ref{potential-g-node} and \ref{potential-g-cuspII}, the coefficients of the first and second terms in the expansions of $\psi$ depend only on the information away from the singularity.

\end{enumerate}
 
 The boundary behavior of canonically defined plurisubharmonic functions often contains significant information, in general.
 Recall that (see \cite{Lel})   the Lelong number for a plurisubharmonic function $u$ in a neighbourhood of $0 \in \mathbb C^n$ is defined  as 
  $$
  \lim_{r \to 0^+} \frac{\sup_{|z|=r} u(z)} { \log r},
  $$
  which is  the supreme of all numbers $\nu \geq  0$ such that
$u(z) \leq \nu  \log |z| + \Hi (1)$ near $0$.
For a holomorphic family of Riemann surfaces $X_{\lambda}$ 
 parameterized by $\lambda \in \Delta^*$, each fiberwise Bergman kernel is locally written as $\kappa_{X_\lambda}:=k_{\lambda}(z)dz\otimes  
d\bar{z}$, in some coordinate $z$ for some function $k_{\lambda}$. By \eqref{SH}, the potential  $\psi:=\log k_{\lambda}(z)$ is subharmonic
with respect to $ \lambda \in \Delta^*$. Moreover, the boundedness of $\psi $ near $\lambda =0$ proved in Theorems \ref{potential-g-node}, \ref{potential-g-cuspI} and \ref{potential-g-cuspII} yields the following corollary.

  \begin{cor} \label{Lelong}  
 For each family of hyperelliptic curves defined by \eqref{X-node}, \eqref{X-cuspI}  and \eqref{X-cuspII}, as $\lambda\rightarrow 0$, $\psi:=\log k_{\lambda}(z)$ extends to a subharmonic function in $\lambda \in \Delta$, where the local coordinate is $z:=\sqrt{x}\neq 0$. Moreover, $\psi$ has Lelong number zero at $\lambda =0$.

  \end{cor}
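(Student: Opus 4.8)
The plan is to deduce the corollary from the subharmonicity of $\psi$ on $\Delta^*$ recorded just above, together with a classical removable‑singularity principle for subharmonic functions; the only genuine input needed is an upper bound for $\psi$ near $\lambda=0$, and such a bound is exactly what the three theorems supply. Throughout I would hold $z=\sqrt{x}\neq 0$ with $|z|$ small fixed, and regard $\psi=\psi(\lambda)=\log k_\lambda(z)$ as a function of $\lambda\in\Delta^*$ alone, with $z$ playing the role of a parameter.

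First I would record that $\psi$ is subharmonic on $\Delta^*$ by \eqref{SH}, and then extract boundedness near $\lambda=0$ from the three theorems. In each of the families \eqref{X-node}, \eqref{X-cuspI}, \eqref{X-cuspII}, the right‑hand side of the displayed asymptotic equivalence for $\psi$ in the corresponding theorem tends to $0$ as $\lambda\to 0$ for our fixed $z$: it is $O\bigl((-\log|\lambda|)^{-1}\bigr)$ in the nodal case (Theorem \ref{potential-g-node}(ii)) and in the cuspidal case of type II (Theorem \ref{potential-g-cuspII}(ii)), and it is $O(|\lambda|^{1/4})$ in the cuspidal case of type I (Theorem \ref{potential-g-cuspI}(ii), cf.\ Remark (a) above). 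Hence $\psi(\lambda)$ has a finite limit as $\lambda\to 0$, namely $\log k_0(z)$ for \eqref{X-node} and \eqref{X-cuspII}, and $\log\bigl(k_0(z)+4\,|z^4P(z^2)|^{-1}\bigr)$ for \eqref{X-cuspI}; both limits are finite because $z\neq 0$ is small, $k_0(z)>0$ there, and $P$ has no zero in a neighbourhood of $0$ (its roots satisfy $|a_j|>1$). In particular $\psi$ is bounded on some punctured disc $\{0<|\lambda|<\varepsilon\}$.

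Next I would invoke the removable‑singularity theorem for subharmonic functions: a function subharmonic on $\Delta^*$ and locally bounded above near the polar set $\{0\}$ extends uniquely to a subharmonic function $\widetilde\psi$ on $\Delta$ with $\widetilde\psi(0)=\limsup_{\lambda\to 0}\psi(\lambda)$, which by the previous step equals the genuine limit computed there (so $\widetilde\psi$ is even continuous at $0$). Finally, for the Lelong number I would use $\nu(\widetilde\psi,0)=\liminf_{\lambda\to 0}\widetilde\psi(\lambda)/\log|\lambda|$; since $\widetilde\psi$ is bounded near $0$ while $\log|\lambda|\to-\infty$, the ratio tends to $0$, so $\nu(\widetilde\psi,0)=0$, proving the last assertion.

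There is no substantive analytic obstacle once the explicit asymptotics of Theorems \ref{potential-g-node}, \ref{potential-g-cuspI} and \ref{potential-g-cuspII} are in hand; the corollary is then a soft consequence of one‑variable potential theory. The only delicate point worth flagging is the passage from the fiberwise asymptotics — stated for a fixed nonzero $z$ — to a statement about $\psi$ as a function of $\lambda$: one must check that the limiting value is finite (which is where $z\neq 0$ small and the non‑vanishing of $P$ near $0$ enter), after which the extension and the Lelong‑number computation are both carried out purely in the single variable $\lambda$.
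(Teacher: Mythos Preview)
Your proposal is correct and follows exactly the route the paper itself takes: the paper does not give a separate proof of the corollary but simply remarks that ``the boundedness of $\psi$ near $\lambda=0$ in Theorems \ref{potential-g-node}, \ref{potential-g-cuspI} and \ref{potential-g-cuspII} yields the following corollary,'' leaving the removable-singularity and Lelong-number steps implicit. You have spelled out precisely those standard potential-theoretic details.
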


 Thus, in our studies of curves, the subharmonic function $\psi$ tends to be small, not big, near the singular fibers (cf. \cite{BP}).
  
\paragraph{Jacobian varieties} 
 For a curve $X_{\lambda}$ of genus $g\geq 2$, its Jacobian variety $\text{Jac}(X_{\lambda})$ is a $g$-dimensional complex torus whose Bergman kernel by the definition \eqref{def} is written as $ \mu_{\lambda}  (dw^1\wedge \cdots \wedge dw^g) \otimes  
(d\overline{w^1}\wedge \cdots \wedge d\overline{w^g}) $, under the coordinate $(w^1, \ldots, w^g)$ induced from $\mathbb C^g$. For each family of hyperelliptic curves defined by \eqref{X-node}, \eqref{X-cuspI}, or \eqref{X-cuspII}, we determine the asymptotic behaviors of the Bergman kernels on the naturally associated family of Jacobians $\text{Jac}(X_{\lambda})$ as follows. 
 
 \begin{theorem}  [Jacobian varieties] \label{Jacobian}   
 As $\lambda\rightarrow 0$, it holds that
 $$
 \log \mu_{\lambda}  =   
\begin{cases}
-\log   ( {-\log |\lambda|}) + \log \frac{\pi}{\det (\Imagine  Z_0 )}     + \Hi \left(  (\log|\lambda|)^{-1}  \right), &  \text{for } X_{\lambda} \text{ in }\eqref{X-node}\, \text{or}\, \eqref{X-cuspII} ;\\

\medskip

-\log \det (\Imagine  Z_0 ) + \Hi ( \lambda^{ {1}/{2}}),   &  \text{for } X_{\lambda} \text{ in }\eqref{X-cuspI}.

\end{cases}
 $$
 
  \end{theorem}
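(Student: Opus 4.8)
\noindent\textbf{Proof proposal for Theorem \ref{Jacobian}.}
For each of the three families the plan is to reduce the claim to the asymptotic behaviour of $\det(\Imagine Z_\lambda)$, where $Z_\lambda$ is the normalized period matrix of $X_\lambda$, and then to read off that behaviour from the period-matrix expansions of Sections~3--8. For the first step, note that $\mathrm{Jac}(X_\lambda)=\mathbb C^g/(\mathbb Z^g+Z_\lambda\mathbb Z^g)$ and that $H^0(\mathrm{Jac}(X_\lambda),\mathcal K)$ is one-dimensional, spanned by $dw_1\wedge\cdots\wedge dw_g$ in the coordinate $w$ of $\mathbb C^g$; its $L^2$-norm is the covolume of the period lattice, so $\mu_\lambda^{-1}=c_g\det(\Imagine Z_\lambda)$ for a constant $c_g$ depending only on $g$ and on the chosen normalization of the volume form, and hence $\log\mu_\lambda=-\log\det(\Imagine Z_\lambda)-\log c_g$. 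Everything thus comes down to the behaviour of $\det(\Imagine Z_\lambda)$ as $\lambda\to0$, with $c_g$ to be absorbed into the additive constants of the statement.

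For the nodal family \eqref{X-node} and the cusp-II family \eqref{X-cuspII} the degeneration is multiplicative. With the symplectic basis fixed in Sections~3--4 and 7--8, the vanishing cycle forces one entry of $Z_\lambda$, say $(Z_\lambda)_{gg}$, to grow like a positive constant times $\log\lambda$, so that $\Imagine(Z_\lambda)_{gg}=\mathrm{const}\cdot(-\log|\lambda|)+\Hi(1)\to+\infty$; meanwhile the complementary $(g-1)\times(g-1)$ block converges to $Z_0$, the normalized period matrix of the normalization $Y$, and the remaining entries of the last row and column stay bounded, with the subleading terms supplied by the Taylor/series expansions of the $A$- and $B$-periods. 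A Schur-complement expansion of $\det(\Imagine Z_\lambda)$ along the last row and column then gives
\[
\det(\Imagine Z_\lambda)=\Imagine(Z_\lambda)_{gg}\cdot\det(\Imagine Z_0)\cdot\Big(1+\Hi\big((\log|\lambda|)^{-1}\big)\Big),
\]
whence $\log\det(\Imagine Z_\lambda)=\log(-\log|\lambda|)+\log\det(\Imagine Z_0)+\mathrm{const}+\Hi\big((\log|\lambda|)^{-1}\big)$; substituting into the reduction above and collecting the constants (which combine to produce the $\pi$) yields the first line of the theorem.

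For the cusp-I family \eqref{X-cuspI} the degeneration is additive: the branch points $0,\pm\sqrt\lambda$ collide, and Lemmata~\ref{period-A-hyp-cuspI} and \ref{period-B-hyp-cuspI} (with their genus-$g$ analogues) show that $A_\lambda$, $B_\lambda$, and hence $Z_\lambda$, admit convergent expansions in $\sqrt\lambda$. In particular $\det(\Imagine Z_\lambda)$ converges, with error $\Hi(\lambda^{1/2})$, to a limit equal to $c_g^{-1}\det(\Imagine Z_0)$ --- here $Z_0$ is the genus-$(g-1)$ period matrix of $Y$ --- which after substitution gives $\log\mu_\lambda=-\log\det(\Imagine Z_0)+\Hi(\lambda^{1/2})$, the second line.

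The only genuine difficulty is the period-matrix analysis: identifying which entry of $Z_\lambda$ carries the logarithm and with which coefficient in the nodal and cusp-II cases, controlling the converging $(g-1)\times(g-1)$ block and the remaining entries, and, in the cusp-I case, establishing convergence of the full matrix together with the order-$\lambda^{1/2}$ remainder. This is precisely the output of the period-integral computations of Sections~3--8; granting them, passing to $\det(\Imagine Z_\lambda)$ via the Schur complement and then to $\log\mu_\lambda$ is routine bookkeeping of the error terms.
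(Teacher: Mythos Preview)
Your approach is the same as the paper's: identify $\mu_\lambda=(\det\Imagine Z_\lambda)^{-1}$ and read off the asymptotics of $\det(\Imagine Z_\lambda)$ from the block structure of $\Imagine Z_\lambda$ established in Sections~3--8, using a Schur/cofactor expansion along the distinguished row and column.

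One correction is needed in the cusp-I case. You assert that $Z_\lambda$ expands in powers of $\sqrt\lambda$, but the off-diagonal entries of $\Imagine Z_\lambda$ are actually of order $\lambda^{1/4}$ (see the proof of Theorem~\ref{potential-g-cuspI} and, in genus two, Lemmata~\ref{period-A-2-cuspI}--\ref{period-B-2-cuspI}). The reason the determinant nevertheless has error $\Hi(\lambda^{1/2})$ is structural: those $\Hi(\lambda^{1/4})$ off-diagonal entries enter the determinant only through products, while the diagonal blocks are $1+\Hi(\lambda^{1/2})$ and $\Imagine Z_0+\Hi(\lambda^{1/2})$. This gives $\det(\Imagine Z_\lambda)=\det(\Imagine Z_0)+\Hi(\lambda^{1/2})$ directly, so in particular the limit is $\det(\Imagine Z_0)$ on the nose; your hedge that the limit is ``$c_g^{-1}\det(\Imagine Z_0)$'' is unnecessary (and in the paper's normalization $c_g=1$, consistent with the $\pi$ appearing in the first line).
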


 It is worth mentioning that as the curves $X_{\lambda}$ in \eqref{X-cuspI} degenerate to a cuspidal one, the family of their Jacobian varieties $\text{Jac}(X_{\lambda})$ 
 has a smooth fiber at   $\lambda = 0$. However, for two other families of curves $X_{\lambda}$ in \eqref{X-node} and \eqref{X-cuspII}, their Jacobian varieties $\text{Jac}(X_{\lambda})$ indeed degenerate, as   $\lambda \to 0$.
 Theorem \ref{Jacobian} generalizes the results in \cite{D1, D2, D4} on elliptic curves toward higher dimensional Abelian varieties. One may further compare our results with recent works \cite{EFM, Kim, SSW, SZ, Yo}.
 An announcement of this paper for curves appeared in \cite{D5}.

\section{Bergman kernel on hyperelliptic curves} 

The Bergman kernel, defined in \eqref{def}, is independent of the choices of the orthonormal basis for the Hilbert space of $L^2$ holomorphic top-forms.
We consider the complex analytic families of hyperelliptic curves $X_{\lambda}:=\{(x,y)\in \mathbb C^2\, |\,y^2=h_{\lambda}(x)P(x)\}$ of genus $g\geq 2$,  parameterized by $\lambda \in \Delta^*$.
 It is known (cf. \cite{Nag, We}) that the Bergman kernel $\kappa_{X_\lambda}$ on the curve $X_\lambda$ can be written as 
$$
\kappa_{X_\lambda}:=\sum_{i, j=1}^g ((\Imagine Z)^{-1})_{i, j} \,\omega_i  \otimes \overline {\omega_j},
$$ 
where $\omega_i$ is given in \eqref{basis} and $Z$ is the normalized Riemann period matrix defined by \eqref{homology basis}. 
To get the above alternative definition of $\kappa_{X_\lambda}$, it suffices to observe that by the Hodge-Riemann bilinear relation  
\begin{equation} \label{bilinear}
\frac{\sqrt{-1}}{2}\int_{X_\lambda} \omega_i \wedge \overline{ \omega_j} =  (\Imagine Z)_{ij},
\end{equation}
which is independent of the choice of the homology basis.  Since each $\omega_i$ is a linear combination of $\{e_j\}_{j=1}^g$, a complete orthonormal basis of $H_{L^2}^0(X, \mathcal K)$, the 
 Bergman kernel  defined in \eqref{def}  can be alternatively written as above in view of \eqref{bilinear}.

 \medskip
  
For our convenience, we fix once and for all  a  canonical  homology basis  $\delta_i$ and $\gamma_j$ of $H_1(X_{\lambda}, \mathbb Z)$  for each family of hyperelliptic curves defined by \eqref{X-node}, \eqref{X-cuspI} and \eqref{X-cuspII} in the following way. On $X_{\lambda}$ there is a canonical involution induced by $y \mapsto -y$, which we denote by $\sigma: X_{\lambda} \to X_{\lambda}$, and the fixed points of $\sigma$ are the Weierstra{\ss} points. For concreteness, we first deal with the  family of curves defined by \eqref{X-node}, and the  Weierstra{\ss} points  correspond to the points $x=0, x=\lambda, x=1, x=a_1, \ldots, x=a_{2g-2}$ and the point $\infty.$ We denote them by $W_1, \ldots, W_{2g+2}$, where $W_{2g+2} = \infty$.   We let $\delta_1$ be a cycle lying in a single sheet of  $X_{\lambda} $ such that it contains $0, \lambda \in \mathbb C$.
Let $\delta_2$ be a cycle lying in a single sheet of  $X_{\lambda} $ such that it contains $0, \lambda, 1, a_1 \in \mathbb C$.
  Similarly, for each $j = 3, \ldots , g$,  let $\delta_j$ be a cycle lying in a single sheet of  $X_{\lambda} $ such that it contains $0, \lambda, 1, a_1, \ldots, a_{2j-3} \in \mathbb C$. On the other hand, let $\gamma_1$ be a cycle passing from one sheet to the other as we pass the branch cut such that  it contains only two Weierstra{\ss} points   $W_2$ and $W_3$. Thus, $\gamma_1$ is made of two pieces, one in one sheet and one in the  other sheet.
In general, for each $j = 2, \ldots , g$, let $\gamma_j$ be a cycle passing from one sheet to the other as we pass the branch cut such that  it contains only two Weierstra{\ss} points  $W_{2j}$ and $W_{2j+1}$. Next, for the  family of curves defined by \eqref{X-cuspI},  the  Weierstra{\ss} points  correspond   to  the points $x=-\sqrt\lambda, x=0, x=\sqrt\lambda, x=a_1, \ldots, x=a_{2g-2}$ and the point $\infty$; for the  family of curves defined by \eqref{X-cuspII},  the  Weierstra{\ss} points  correspond  to the points $x=0,  x=\lambda, x= \lambda^2, x=a_1, \ldots, x=a_{2g-2}$ and the point $\infty.$ The cycles $\delta_i$ and $\gamma_j$
for these two families can be chosen in the same way.
Lastly, an illustration of the above choice of  $\delta_i$ and $\gamma_j$ can be found at, for example,  \cite[Figure 1.15]{CMSP}; see also \cite[IIIa--§5]{Mum} for a similar choice of homology basis.

  \medskip

By the implicit function theorem, near  $(0, 0)$ on each hyperelliptic curve $X_{\lambda}$ belonging to  either family \eqref{X-node}, \eqref{X-cuspI}, or \eqref{X-cuspII}, one may  simply take $ y$ as a   local coordinate.  To   make the computations less involved, near  $(0, 0)$ we furthermore choose the local coordinate  $z=\sqrt x$ or its opposite for a point $(x,y)$ on the first or second sheet of $X_{\lambda}$, respectively. Here $\sqrt x$ is holomorphic on   $\mathbb C \setminus L$, where $L$ is a line passing through  the distinct roots of the polynomial  $h_{\lambda}(x)P(x)$. Since for a point on either sheet it holds that $z^2=x$ and $2zdz=dx$, the globally defined Abelian differentials $\omega_i$ can be expressed  in this $z$-coordinate as below.
 Moreover,  we write $\kappa_{X_\lambda} =k_{\lambda}(z) dz \otimes  d\bar z$  and   explore the asymptotic behavior of $\psi :=\log k_{\lambda}(z)$, when the curve degenerates  as $\lambda\rightarrow 0$.

 \paragraph{Nodal family} 
A nodal Riemann surface $X$ is a connected complex space such that every point $p \in X$ has a neighborhood isomorphic to either a disk in $\mathbb C$ or $\{(x,y)\in \mathbb C^2\, |\,|x|, |y| <1\,, xy= 0\}$. In the latter, $p$ is a node. For the family of hyperelliptic curves in \eqref{X-node}, as $\lambda \to 0$, $X_{\lambda}$ degenerates to a singular curve $X_0$ with a non-separating node. 
In the above $z$-coordinate,
$$
\omega_i=\frac{2z^{2(i-1)}dz}{\sqrt {(z^2-\lambda)(z^2-1) P(z^2)}}, \, i=1,\ldots,g,
$$ 
and the Bergman kernel $\kappa_{X_\lambda}$ can be written as 
\begin{equation} \label{Bergman-g-node}  
\sum_{i,j=1}^g  ((\Imagine Z)^{-1})_{i, j}  (z^{i} \overline{z}^{j})^2 \, \frac{ 4  dz\otimes d\bar z }{ |z^4(z^2-\lambda)(z^2-1)   P(z^2)|}=:
{k}_{\lambda}(z)dz\otimes d\bar z.
\end{equation}
Particularly, if $P =(x-a)(x-b)$, then \eqref{Bergman-g-node} reduces to
\begin{equation} \label{Bergman-2-node}  4  \frac{((\Imagine Z)^{-1})_{1,1}  + ((\Imagine Z)^{-1})_{1,2} \overline z^{2 } + ((\Imagine Z)^{-1})_{2,1}  {z}^{2} + ((\Imagine Z)^{-1})_{2,2} |z|^{4}  }{ |(z^2-1)(z^2-a)(z^2-b) (z^2-\lambda)|} dz\otimes d\bar z.\end{equation}
The pinching coordinate could be used for general (possibly non-hyperelliptic) algebraic curves degenerating to a nodal curve, which represents a boundary point of the Deligne-Mumford compactification of moduli space (see \cite{F, Y, Ma, We}). 
For a compact Riemann surface of genus $g \geq 2$, its non-hyperellipticity   is characterized by the non-vanishing of the Gaussian curvature of its Bergman kernel (see \cite{Lew}). However, in this paper we do not use the effective pinching coordinate method, which works for both non-separating and separating nodal cases, since we need to deal with the cuspidal curves as well.

\paragraph{Cuspidal family, Case I} 
For the family of hyperelliptic curves in \eqref{X-cuspI}, as $\lambda \to 0$, $X_{\lambda}$ degenerates to a singular curve $X_0$ with a cusp.  Similarly,  in the above $z$-coordinate,
$$
\omega_i=\frac{2z^{2(i-1)}dz}{\sqrt {(z^4-\lambda)  P(z^2)}}, \, i=1,\ldots,g,
$$ 
and   the Bergman kernel $\kappa_{X_\lambda}$ can be written as 
\begin{equation} \label{Bergman-g-cuspI}  
\kappa_{X_\lambda}= \sum_{i,j=1}^g  ((\Imagine Z)^{-1})_{i, j}  (z^{i} \overline{z}^{j})^2 \, \frac{ 4  dz\otimes d\bar z}{ |z^4(z^4-\lambda)\cdot P(z^2)|}=:
{k}_{\lambda}(z) dz\otimes d\bar z.
\end{equation}
Particularly, if $P =(x-a)(x-b)$, then \eqref{Bergman-g-cuspI} reduces to
\begin{equation} \label{Bergman-2-cuspI}  4   \frac{((\Imagine Z)^{-1})_{1,1}  + ((\Imagine Z)^{-1})_{1,2} \overline z^{2 } + ((\Imagine Z)^{-1})_{2,1}  {z}^{2} + ((\Imagine Z)^{-1})_{2,2} |z|^{4}  }{ |(z^2-a)(z^2-b) (z^4-\lambda)|} dz\otimes d\bar z.\end{equation}

\paragraph{Cuspidal family, Case II} 
For the family of hyperelliptic curves in \eqref{X-cuspII}, as $\lambda \to 0$, $X_{\lambda}$ degenerates to a singular curve $X_0$ with a  cusp. Similarly,  in the above $z$-coordinate,
$$
\omega_i=\frac{2z^{2(i-1)}dz}{\sqrt {(z^2-\lambda)  (z^2-\lambda^2) P(z^2)}}, \, i=1,\ldots,g,
$$ 
and   the Bergman kernel $\kappa_{X_\lambda}$ can be written as 
\begin{equation} \label{Bergman-g-cuspII}  
\kappa_{X_\lambda}=\sum_{i,j=1}^g  ((\Imagine Z)^{-1})_{i, j}  (z^{i} \overline{z}^{j})^2 \, \frac{ 4 dz\otimes d\bar z }{ |z^4 (z^2-\lambda)(z^2-\lambda^2)\cdot P(z^2)|}=:
{k}_{\lambda}(z)  dz\otimes d\bar z.
\end{equation}
Particularly, if $P =(x-a)(x-b)$, then \eqref{Bergman-g-cuspII} reduces to
\begin{equation} \label{Bergman-2-cuspII}  4 \frac{((\Imagine Z)^{-1})_{1,1}  + ((\Imagine Z)^{-1})_{1,2} \overline z^{2 } + ((\Imagine Z)^{-1})_{2,1}  {z}^{2} + ((\Imagine Z)^{-1})_{2,2} |z|^{4}  }{ |(z^2-a)(z^2-b) (z^2-\lambda)(z^2-\lambda^2)|} dz\otimes d\bar z.\end{equation}

\paragraph{Normalization of singular curves}

For the {\bf nodal} singular curve
$X_{0}$ in \eqref{X-node}, we consider its normalization $Y:=\{(x,y)\in \mathbb C^2\, |\,y^2= (x-1)P(x)\}$, which is a smooth curve of genus $g-1$. 
The global Abelian differentials $\omega_i$ can be expressed in the above $z$-coordinate  as 
$$
\omega_i=\frac{x^{(i-1)}dx}{\sqrt { (x-1)P(x)}}=\frac{2z^{2i-1}dz}{\sqrt {(z^2-1)P(z^2)}}, \, i=1,\ldots,g-1.
$$
Let $Z_{0}$ be the normalized period matrix of $Y$.
By the $L^2$ removable singularity theorem, $Y$ and the regular part of $X_{0}$ have the same Bergman kernel which can be written    as 
 $\kappa_ 0=k_0(z) dz\otimes d\bar z$, where
  \begin{equation} \label{Bergman-reg-node}  
k_0(z)=\frac{4}{|z^2 (z^2-1) P(z^2)|}    \sum_{i,j=1}^{g-1}  ((\Imagine  Z_0 )^{-1})_{i, j}   ( z^{i} \overline{z}^{j})^2.
  \end{equation}

For the {\bf cuspidal} singular curve
$X_{0}:=\{(x,y)\in \mathbb C^2\, |\,y^2=x^3 P(x)\} $, its normalization is $Y:= \{(x,y)\in \mathbb C^2\, |\,y^2= x P(x)\} $. Similarly, in the above $z$-coordinate, the  Abelian differentials $\omega_i$ can be expressed as 
$$
\omega_i=\frac{x^{(i-1)}dx}{\sqrt {  x  P(x)}}=\frac{2z^{2i-2}dz}{\sqrt { P(z^2)}}, \, i=1,\ldots,g-1,
$$
and the Bergman kernel on $Y$ can be written  as 
$\kappa_ 0=k_0(z) dz\otimes d\bar z$,
where
   \begin{equation}  \label{Bergman-reg-cusp}  
k_0(z)=\frac{4}{|z^4 P(z^2)|}    \sum_{i,j=1}^{g-1}  ((\Imagine  Z_0 )^{-1})_{i, j}   ( z^{i} \overline{z}^{j})^2.
  \end{equation}

\section{Non-separating node: genus-two curves}

In this section, we consider a family of genus two curves 
\begin{equation} \label{X-node-2}
X_{\lambda}:=\{(x,y)\in \mathbb C^2\, |\,y^2=x(x-{\lambda}) (x-1)(x-a)(x-b)\}, 
\end{equation}
where $a, b, \lambda$ are distinct complex numbers satisfying $0<|\lambda|<1<|a|<|b|$. As $\lambda \to 0$, $X_{\lambda}$ degenerates to a singular curve $X_0$ with a non-separating node. The normalization of $X_0$ is an elliptic curve $\{(x,y)\in \mathbb C^2\, |\,y^2=(x-1)(x-a)(x-b) \},$  
whose period is $c$ given in \eqref{period-c}. Let 
$$
  c_1 :=   
   \int_1^a \frac{ \sqrt{ab}dx}{\sqrt{(x-1)(x-a)(x-b)}} 
  $$
be a constant depending on $a, b$. The Bergman kernels on $X_{\lambda}$ and on the normalization of $X_0$ are denoted by $\kappa_ {X_\lambda}$ and $\kappa_0$, respectively. In the local coordinate $z:=\sqrt{x}$ near $(0, 0)$, we write $\kappa_ {X_\lambda}=k_{\lambda}(z) dz\otimes d\bar z$, and $\kappa_ 0=k_0(z) dz\otimes d\bar z$. Then, our result on the asymptotic behaviour of the Bergman kernel $\kappa_{X_\lambda}$ with precise coefficients is stated as follows.

\begin{theorem} \label{potential-2-node}  
For $X_{\lambda}$ defined by \eqref{X-node-2}, as $\lambda\rightarrow 0$, it holds that
\item [(i)] $ \kappa_ {X_\lambda} \to \kappa_0$;

\medskip

\item [(ii)] for small $|z| \neq 0$ , 
$$
\log {k}_{\lambda}(z) -\log {k}_0(z) 
 \sim  \frac{\pi}{-\log |\lambda|} {\frac{  \Imagine c  +  ( {z}^{2}+\overline z^{2 }) \Real \{c_1^{-1}\}  } {   |z|^{4}   }      }   .
$$ 
  \end{theorem}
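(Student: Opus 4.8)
The plan is to derive everything from the explicit formula \eqref{Bergman-2-node} for $k_\lambda(z)$ together with \eqref{Bergman-reg-node} (specialized to the elliptic case) for $k_0(z)$, so that
$$
\log k_\lambda(z)-\log k_0(z)=\log\frac{N_\lambda(z)\,|z^2(z^2-1)P(z^2)|}{N_0(z)\,|(z^2-1)(z^2-a)(z^2-b)(z^2-\lambda)|},
$$
where $N_\lambda$ is the numerator quadratic form in $((\Imagine Z)^{-1})_{ij}$ appearing in \eqref{Bergman-2-node} and $N_0=((\Imagine Z_0)^{-1})_{1,1}$ is the scalar in \eqref{Bergman-reg-node}. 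Since $P(x)=(x-1)(x-a)(x-b)$ here and the denominators are genuinely $|z^2-\lambda|$ versus $|z^2|=|z|^2$, the modulus factors contribute $\log\frac{|z|^2}{|z^2-\lambda|}$, which is $O(|\lambda|/|z|^2)$ — of strictly lower order than the claimed $\pi/(-\log|\lambda|)$ — so the whole asymptotic is governed by $\log(N_\lambda/N_0)$, i.e.\ by the degeneration of the period matrix $Z$ of $X_\lambda$ to $Z_0$ (the period $c$ of the normalizing elliptic curve) as $\lambda\to 0$.

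The heart of the argument is therefore the asymptotic expansion of $\Imagine Z$ as $\lambda\to 0$. First I would use the hyperelliptic model $y^2=x(x-\lambda)(x-1)(x-a)(x-b)$ and choose the homology basis so that, as $\lambda\to 0$, $\delta_1$ shrinks around the pair of branch points $0,\lambda$ (the vanishing cycle producing the non-separating node) while $\delta_2,\gamma_2$ survive on the normalizing elliptic curve and $\gamma_1$ becomes a path joining the two preimages of the node. Then the classical nilpotent-orbit/Picard–Lefschetz analysis (as in \cite{CMSP}, and as used in \cite{D1,D2,D4}) gives: the $A$-period over $\delta_1$ of $\omega_1=dx/y$ tends to a nonzero constant while that of $\omega_2=x\,dx/y$ tends to $0$ like $\lambda$; and $B_{1,1}=\int_{\gamma_1}\omega_1\sim \frac{1}{\pi}\log\lambda\cdot(\text{const})+(\text{const})$ — the logarithmic growth being exactly the source of the $(-\log|\lambda|)^{-1}$ factor — while the other periods converge to those of the elliptic curve. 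Feeding these into $Z=A^{-1}B$ yields $Z_{1,1}\sim \frac{\log\lambda}{2\pi i}+O(1)$, $Z_{1,2}=Z_{2,1}\to$ a finite limit, $Z_{2,2}\to c$; here is where the constant $c_1=\int_1^a\sqrt{ab}\,dx/\sqrt{(x-1)(x-a)(x-b)}$ enters, as the coefficient tying the off-diagonal entry $Z_{1,2}$ to the elliptic data after normalization. The Taylor-expansion bookkeeping for $\int_{\delta_j}\omega_i$ and $\int_{\gamma_j}\omega_i$ in powers of $\lambda$ — carefully tracking which integrals pick up the $\log\lambda$ and with what constant — is the step I expect to be the main obstacle, since the answer's precise coefficients ($\Imagine c$ and $\Real\{c_1^{-1}\}$) are dictated entirely by the subleading constants in these expansions.

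Given the expansion of $Z$, I would invert $\Imagine Z$ block-wise. Writing $\Imagine Z_{1,1}\sim \frac{-\log|\lambda|}{2\pi}=:t\to+\infty$ and $\Imagine Z_{1,2},\Imagine Z_{2,2}$ bounded, the $2\times 2$ inverse expands as
$$
(\Imagine Z)^{-1}=\frac{1}{t}\begin{pmatrix}1&0\\0&0\end{pmatrix}+\begin{pmatrix}0&0\\0&(\Imagine Z_{2,2})^{-1}\end{pmatrix}+O(t^{-2}),
$$
so that $N_0=(\Imagine Z_0^{-1})_{1,1}=(\Imagine c)^{-1}$ (matching the normalizing curve, whose $1\times1$ period matrix is the surviving elliptic period) and
$$
N_\lambda(z)-N_0(z)\ \text{(rescaled)}\ \sim\ \frac{1}{t}\Big(\,1-2\Real\{(\Imagine Z_{2,2})^{-1}\Imagine Z_{1,2}\,z^2\}+\cdots\Big),
$$
which after dividing by $N_0$ and using $\log(1+u)\sim u$ produces $\log k_\lambda-\log k_0\sim \frac{1}{t\,N_0}\cdot(\text{linear-in-}z^2\ \text{form})=\frac{2\pi}{-\log|\lambda|}\cdot\Imagine c\cdot(\cdots)$, and then identifying the $z^2$-coefficient with $\Real\{c_1^{-1}\}$ via the off-diagonal period computation, together with the normalization by $|z|^4$ coming from the $(z^i\bar z^j)^2$ weights in \eqref{Bergman-2-node}, gives exactly the stated formula $\frac{\pi}{-\log|\lambda|}\cdot\frac{\Imagine c+(z^2+\bar z^2)\Real\{c_1^{-1}\}}{|z|^4}$. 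Part (i) is then immediate: $k_\lambda/k_0\to 1$ uniformly on compact subsets of $0<|z|<\varepsilon$ because every period entry converges except $Z_{1,1}$, whose blow-up only makes the offending entry of $(\Imagine Z)^{-1}$ vanish. Throughout I would lean on Theorem \ref{potential-2-node}'s placement before the general-genus Theorem \ref{potential-g-node}, so the $2\times 2$ linear algebra here is the prototype that the block structure of the general case reduces to.
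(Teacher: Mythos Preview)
Your approach is essentially identical to the paper's: it proves two lemmata giving the asymptotics of the $A$- and $B$-period matrices (your ``Taylor-expansion bookkeeping'' step), forms $Z=A^{-1}B$, inverts $\Imagine Z$ by $2\times 2$ linear algebra exactly as you describe, and then reads off $k_\lambda-k_0$ from \eqref{Bergman-2-node} and \eqref{reg-2-node}. Two small slips to fix in execution: here $P(x)=(x-a)(x-b)$ (degree $2g-2=2$), not $(x-1)(x-a)(x-b)$, and the paper's computation gives $\Imagine Z_{1,1}\sim -\tfrac{\log|\lambda|}{\pi}$ rather than $-\tfrac{\log|\lambda|}{2\pi}$, which is what makes the final coefficient come out as $\tfrac{\pi}{-\log|\lambda|}$ rather than $\tfrac{2\pi}{-\log|\lambda|}$.
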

  
In fact, the results in Section 4 on general hyperelliptic curves largely rely on our proofs in this section. To prove Theorem \ref{potential-2-node}, we need the following two lemmata by analyzing the asymptotics of the $A$-period matrix and $B$-period matrix on $X_{\lambda}$.

\begin{lemma} \label{period-A-2} 
Under the same assumptions as in Theorem \ref{potential-2-node}, as $\lambda\rightarrow 0$, it holds that
  $$ 
  A = \left(
    \begin{array}{cc}
      \frac{-2\pi}{\sqrt{ab}}  & c_2 \\
     0   & \frac{-2c_1}{\sqrt{ab}} 
    \end{array}
  \right) + \Hi(\lambda) ,
  $$ 
  where 
$c_2:= \int_{\delta_2} \frac{dx}{x\sqrt{(x-1)(x-a)(x-b)}}$ and $\lim\limits_{\lambda \to 0} \frac{\Hi( \lambda)}{\lambda}$ is a finite matrix.

\end{lemma}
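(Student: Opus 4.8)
The plan is to compute the $A$-period matrix $A_{i,j}=\int_{\delta_j}\omega_i$ directly from the explicit basis $\omega_1=dx/y$, $\omega_2=x\,dx/y$ on $X_\lambda$, where $y=\sqrt{x(x-\lambda)(x-1)(x-a)(x-b)}$, by choosing a convenient symplectic homology basis adapted to the degeneration. As $\lambda\to 0$ the branch points $0$ and $\lambda$ coalesce, pinching a cycle; I would take $\delta_1$ to be the vanishing cycle encircling the pair $\{0,\lambda\}$ (a small loop of radius $\asymp\sqrt{|\lambda|}$ in the $z=\sqrt x$ coordinate, hence $O(1)$ there — this is exactly why the $z$-coordinate is used), and $\delta_2$ to be a cycle away from the node, essentially the $\delta$-cycle on the limiting elliptic curve $Y_0:\ y^2=(x-1)(x-a)(x-b)$, lying in a region $|x|\geq\epsilon$ where the integrand depends holomorphically on $\lambda$ near $\lambda=0$.

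The computation then splits by entries. First I would handle $A_{1,2}$ and $A_{2,2}$: over $\delta_2$, which avoids a neighborhood of $x=0$, the integrand $x^{i-1}/\sqrt{x(x-\lambda)(x-1)(x-a)(x-b)}$ is jointly holomorphic in $(x,\lambda)$, so by dominated convergence (or simply Taylor-expanding $1/\sqrt{x-\lambda}$) one gets $A_{i,2}=\int_{\delta_2}\frac{x^{i-1}dx}{\sqrt{x}\sqrt{(x-1)(x-a)(x-b)}}+O(\lambda)$; the $i=2$ integral is $c_1\cdot(-2/\sqrt{ab})$ up to the normalization absorbed in the definition of $c_1$ (note $\sqrt{x}$ with $x$ replaced consistently gives the stated $-2c_1/\sqrt{ab}$ after the branch-point conventions $1<|a|<|b|$ fix the sign), and the $i=1$ integral is by definition $c_2$. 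Second, for the vanishing-cycle entries $A_{1,1}$ and $A_{2,1}$: on the small loop around $\{0,\lambda\}$, write $x=z^2$, $dx=2z\,dz$, so $\omega_i=\frac{2z^{2i-2}\,dz}{\sqrt{(z^2-\lambda)(z^2-1)(z^2-a)(z^2-b)}}$, and the loop becomes a fixed small circle $|z|=\rho$. For $i=2$ the integrand is $O(z^2)$ and the residue-type contribution vanishes to the order claimed, giving $A_{2,1}=O(\lambda)$ (in fact $O(\rho^2)$, which one sends to $0$). For $i=1$, expanding $\big((z^2-1)(z^2-a)(z^2-b)\big)^{-1/2}=(-1)^{3/2}(abc\cdots)^{-1/2}(1+O(z^2))=\pm(\sqrt{ab}\cdot(\text{sign}))^{-1}(1+O(z^2))$ on $|z|=\rho$ and integrating $\frac{2\,dz}{\sqrt{z^2-\lambda}}$ around the branch cut from $0$ to $\sqrt\lambda$ yields the universal constant $-2\pi$ times $(\text{leading factor})$, producing $A_{1,1}=\frac{-2\pi}{\sqrt{ab}}+O(\lambda)$ (the $O(z^2)$ correction contributes $O(\rho^2)$, absorbed). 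Finally I would check that the error terms are genuinely $O(\lambda)$, not merely $o(1)$: the $\delta_2$-contributions have convergent derivative in $\lambda$ at $0$ since the integrand is holomorphic there, and the vanishing-cycle integrals can be made exact by deforming $|z|=\rho$ with $\rho\to 0$, so the remainder is controlled by the next Taylor coefficient in $\lambda$; this gives $\lim_{\lambda\to0}O(\lambda)/\lambda$ finite as asserted.

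The main obstacle is bookkeeping of branches and orientations: the square root $\sqrt{(x-1)(x-a)(x-b)}$ must be the same determination used in defining $c_1$, $c_2$, and the period $c$ of $Y_0$, and the vanishing cycle's orientation (fixed by the symplectic condition $\delta_i\cdot\gamma_j=\delta_{i,j}$) must be tracked so that the constant in $A_{1,1}$ comes out as $-2\pi/\sqrt{ab}$ with the correct sign rather than $+2\pi/\sqrt{ab}$ or $-\pi/\sqrt{ab}$. I expect the cleanest route is to fix the homology basis on $Y_0$ first, lift it to $X_\lambda$ for $\lambda$ small (using that $X_\lambda\to X_0$ is a deformation retract away from the node), and then the only new datum is the vanishing-cycle integral, which is a classical elementary period computation $\int_{|z|=\rho}\frac{2\,dz}{\sqrt{z^2-\lambda}}$ that I would evaluate by contracting to the cut $[0,\sqrt\lambda]$. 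Everything else is Taylor expansion in $\lambda$ inside fixed compact regions, where uniform convergence of the expansions (and of their $\lambda$-derivatives) is automatic.
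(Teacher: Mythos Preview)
Your approach is essentially the paper's: for the $\delta_2$-entries expand the integrand holomorphically in $\lambda$ on a fixed contour, and for the vanishing cycle $\delta_1$ expand the regular factor $((x-1)(x-a)(x-b))^{-1/2}$ and reduce to a residue computation. The only cosmetic difference is that the paper carries out the $\delta_1$ computation after the inversion $s=1/x$, $t=1/\lambda$ (reading off the coefficient of $ds/s$ on the complementary cycle), whereas you stay in the $z=\sqrt x$ chart; both amount to the same Laurent-coefficient extraction.

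One small caution: you cannot literally ``send $\rho\to 0$'' for the $\delta_1$-integrals, since the contour $|z|=\rho$ must enclose the branch points $\pm\sqrt\lambda$, so the integral is independent of $\rho$ in the admissible range and the bound $A_{2,1}=O(\lambda)$ does not follow from shrinking $\rho$. The correct justification (which you also indicate) is to expand $\dfrac{2z^{2}}{\sqrt{z^{2}-\lambda}}=2z+\dfrac{\lambda}{z}+O(\lambda^{2}/z^{3})$ for $|z|>|\lambda|^{1/2}$ and pick out the $1/z$ coefficient; this gives $A_{2,1}=\dfrac{2\pi i}{\sqrt{-ab}}\cdot\dfrac{\lambda}{2}+O(\lambda^{2})$, exactly matching the paper's computation via $s=1/x$.
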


  \begin{proof} To estimate the four entries of $A$ one by one, we use the choice of cycles $\delta_1, \delta_2$ specified in Section 2. Firstly, $A_{1,1}=\int_{\delta_1}\omega_1$, where $\delta_1$ only contains $0$ and $\lambda$.  By the substitutions $t={\lambda}^{-1}$ and $s= {x}^{-1}$, we get the dual cycle $\tilde \delta_1$ which contains $\{\infty, t \}$ so that $-\tilde\delta_1$ contains $\{0, 1, {a}^{-1}, {b}^{-1}\}$, for $|s| \in (1, |t|)$. As $\lambda\rightarrow 0$,
\begin{align*}
A_{11}& = \int_{\tilde\delta_1} \frac{-s^{-2}ds}{\sqrt{ s^{-1}(s^{-1} - t^{-1})(s^{-1}-1)(s^{-1}-a)(s^{-1}-b)}}\\
&=-\int_{-\tilde\delta_1} \frac{-\sqrt{s} \sqrt{t} ds}{\sqrt{ab(s-1)(s- a^{-1})(s- b^{-1})(s-t) }}\\
&=\int_{-\tilde\delta_1} \frac{ \sqrt{s}  ds}{ \sqrt{-ab(s-1)(s-a^{-1})(s-b^{-1})}} \left( 1+\frac{s}{2t}+\Hi\left(\frac{s^2}{{t}^2}\right)\right) \\
&=\int_{-\tilde\delta_1} \frac{ ds}{ -s\sqrt{-ab}} \left( 1+\frac{s}{2t}+\Hi \left(\frac{s^2}{{t}^2}\right)\right) \left(1+\Hi({{s^{-1}}})\right)\left(1+\Hi({{s^{-1}}})\right)\left(1+\Hi({{s^{-1}}})\right)\\
&=\int_{-\tilde\delta_1} \frac{ ds}{ -s\sqrt{-ab}} \left( 1+\frac{s}{2t}+\Hi(\frac{s^2}{{t}^2})\right) \left(1 +\Hi({{s}^{-1}})\right)\\
&=\frac{ 1}{ -\sqrt{-ab}}\int_{-\tilde\delta_1}  \left(1+\Hi(t^{-1})\right)\frac{ds}{s}\\
&=\frac{ 2\pi }{ -\sqrt{ab}} \left(1 +\Hi({\lambda})\right).
\end{align*} 
Notice that we have used the Maclaurin expansion 
\begin{equation}
\frac{1}{\sqrt{s-a}}=
\begin{cases}
 {\sqrt{-a}^{-1}}  \left( 1+ \Hi\left( {s }{{a^{-1}} }\right)\right), &  |s|<|a|;\\
\sqrt{s}^{-1}  \left(1+ \Hi({ {s^{-1} a}}) \right),  &    |s|>|a|.
\end{cases}
\end{equation}
Secondly, look at $A_{21}=\int_{\delta_1}\omega_2$ and similarly it holds that \begin{align*}
A_{21}&=\int_{-\tilde\delta_1} \frac{ ds}{ -s^2\sqrt{-ab}} \left( 1+\frac{s}{2t}+\Hi\left (\frac{s^2}{{t}^2}\right)\right) \left(1 +\Hi(s^{-1})\right)\\
&=\frac{ 1}{ -\sqrt{-ab}}\int_{-\tilde\delta_1}  \left(\frac{1}{2t}+\Hi(t^{-2})\right)\frac{ds}{s}\\
&=\frac{ 2\pi }{ -\sqrt{ab}} \left(\frac{\lambda}{2} +\Hi(\lambda^2)\right).
\end{align*} 
Thirdly, since  $\delta_2$ contains $\{0, \lambda, 1, a\}$, as $\lambda \to 0$, it holds that
\begin{align*}
A_{12}=& \int_{\delta_2} \frac{dx}{x\sqrt{(x-1)(x-a)(x-b)}} \left(1+\frac{\lambda}{2x} +\Hi\left (\frac{\lambda^2}{x^2}\right)\right)\\
= & \int_{\delta_2} \frac{dx}{x\sqrt{(x-1)(x-a)(x-b)}} \left(1+\Hi\left (\lambda \right)\right).
\end{align*} 
Lastly,
$$
A_{22}=\int_{\delta_2} \frac{dx}{\sqrt{(x-1)(x-a)(x-b)}}\left(1+\Hi(\lambda)\right)=-2\int_1^a \frac{dx}{\sqrt{(x-1)(x-a)(x-b)}}\left(1+\Hi(\lambda)\right).
$$

\end{proof}

\begin{lemma} \label{period-B-2} Under the same assumptions as in Theorem \ref{potential-2-node}, as $\lambda\rightarrow 0$, it holds that $$
  B\sim \left(
    \begin{array}{cc}
      \frac{-2\log \lambda }{\sqrt{-ab}}   & d_1 \\
      \frac{-2}{ \sqrt{-ab}} & d_2
    \end{array}
  \right),
$$ where $d_1:=-2\int_a^b\frac{dx}{x\sqrt{(x-1)(x-a)(x-b)}}$ and $d_2:=-2\int_a^b\frac{dx}{\sqrt{(x-1)(x-a)(x-b)}}.$ \end{lemma}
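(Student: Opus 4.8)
The plan is to mirror the proof of Lemma~\ref{period-A-2}, now for the entries $B_{i,j}=\int_{\gamma_j}\omega_i$. I work with $X_\lambda$ as the double cover of $\mathbb P^1$ branched over $0,\lambda,1,a,b,\infty$ with branch cuts $[0,\lambda]$, $[1,a]$, $[b,\infty]$, so that $\delta_1$ is the vanishing loop around $[0,\lambda]$ and $\delta_2$ the loop around $[0,a]$; dual to these I take $\gamma_2$ to be the loop around the gap $(a,b)$, which stays at a fixed positive distance from the node, and $\gamma_1$ a loop crossing the cut $[0,\lambda]$ once. Each period then reduces to twice an integral over a real interval, to which the Maclaurin expansions of $1/\sqrt{x-\lambda}$, $1/\sqrt{x-a}$, etc.\ from the previous proof apply verbatim.

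First I would treat the second column. Since $\gamma_2$ avoids $x=0$, the factor $1/\sqrt{x-\lambda}$ stays bounded along it, so $B_{i,2}=\int_{\gamma_2}\omega_i\big|_{\lambda=0}+\Hi(\lambda)$, and at $\lambda=0$ the differential $\tfrac{x^{i-1}dx}{y}$ restricts to the corresponding differential on the normalization $Y$, whose loop around $(a,b)$ integrates it to $-2\int_a^b$; this gives $B_{1,2}\sim d_1$ and $B_{2,2}\sim d_2$. For the first column I would split $\gamma_1$ at a fixed small radius $r$. On $\{|x|\le r\}$ one writes $x(x-\lambda)(x-1)(x-a)(x-b)=x(x-\lambda)\bigl(-ab+\Hi(x)\bigr)$ and replaces the regular factor by its value $1/\sqrt{-ab}$ at the node, which reduces matters to the model integrals $\int\tfrac{dx}{\sqrt{x(x-\lambda)}}$ and $\int\tfrac{x\,dx}{\sqrt{x(x-\lambda)}}$, with elementary primitives $\log\bigl(2x-\lambda+2\sqrt{x(x-\lambda)}\bigr)$ and $\sqrt{x(x-\lambda)}+\tfrac{\lambda}{2}\log\bigl(2x-\lambda+2\sqrt{x(x-\lambda)}\bigr)$. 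Evaluated between $\lambda$ and $r$ these give $-2\log\lambda+\Hi(1)$ for $\omega_1$, hence $B_{1,1}\sim\tfrac{-2\log\lambda}{\sqrt{-ab}}$; for $\omega_2$ the extra factor $x$ kills the logarithm and leaves the finite quantity $\tfrac{-2}{\sqrt{-ab}}$, the part of $\gamma_1$ on $\{|x|\ge r\}$ contributing a quantity which tends to $0$ after letting $r\to0$ (comparing it with the $\lambda=0$ integrand on $Y$, and using that $\omega_2$ extends holomorphically across the two preimages of the node while $\omega_1$ does not).

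The step I expect to be the real obstacle is pinning down the exact constant $\tfrac{-2}{\sqrt{-ab}}$ of $B_{2,1}$ (and, in the genus-$g$ version of Theorem~\ref{potential-g-node}, the analogous vectors): this requires careful bookkeeping of the two sheet-changes of $\gamma_1$ across the neck and of the compatibility of all square-root branches with those already fixed for the $A$-periods in Lemma~\ref{period-A-2}. If the direct evaluation is awkward, an alternative is to recover $B_{2,1}$ from the symmetry of the normalized period matrix $Z=A^{-1}B$, i.e.\ from the Riemann bilinear relation $AB^{T}=BA^{T}$: since $A_{2,1}=\Hi(\lambda)$, $A_{1,1}\to\tfrac{-2\pi}{\sqrt{ab}}\ne0$, and $B_{1,1},B_{1,2},B_{2,2}$ are already in hand, that relation pins down $\lim_{\lambda\to0}B_{2,1}$, the stated value amounting to a Legendre-type identity among the periods of the elliptic curve $Y$. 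All remaining estimates are the same routine Maclaurin-expansion bookkeeping as in the preceding proof.
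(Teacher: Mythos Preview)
Your outline for $B_{12}$ and $B_{22}$ matches the paper's proof exactly: expand $(x-\lambda)^{-1/2}$ in powers of $\lambda/x$ along the fixed cycle $\gamma_2$ around $[a,b]$ and pass to the limit. For the first column, however, the paper takes a different route from your split-at-$r$ scheme. It performs the same inversion $s=x^{-1}$, $t=\lambda^{-1}$ already used for $A_{11}$, $A_{21}$, and reduces $B_{11}$, $B_{21}$ to the explicit model integrals
\[
\int_1^t\frac{ds}{s\sqrt{s-t}}\sim\frac{\sqrt{-1}}{\sqrt t}\log t,
\qquad
\int_1^t\frac{ds}{s^2\sqrt{s-t}}\sim-\frac{\sqrt{-1}}{\sqrt t},
\]
each evaluated by an elementary antiderivative; the second of these is also what controls the $\Hi(s^{-1})$ remainder in the $B_{11}$ computation. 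This handles both entries in a single pass, with no auxiliary radius and no appeal to the bilinear relation.

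Your approach is clean for $B_{11}$, but the sentence on $B_{21}$ does not do what you say: the inner model integral $\int_\lambda^r x\,dx/\sqrt{x(x-\lambda)}$ tends to $r$, not to $1$, while the outer piece tends to $-2\int_r^1 dx/\sqrt{(x-1)(x-a)(x-b)}$, not to $0$; so the split by itself never isolates the stated constant $-2/\sqrt{-ab}$, and you would indeed be forced onto the Riemann-bilinear fallback you propose. That fallback is valid, but the paper's inversion trick is worth knowing, since it is the device reused verbatim in the higher-genus lemma to handle the entire first column of $B$ at once via the family of integrals $\int_1^t s^{-\alpha-1}(s-t)^{-1/2}\,ds$.
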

 
  \begin{proof} 
As $t \to \infty$, we make use of the following computations (cf. \cite{CMSP}).

\begin{equation} \label{I} \int_1^t\frac{  ds}{s\sqrt{s-t}}=\left.\frac{-2}{\sqrt t}\sqrt{-1} \log\left(\sqrt{\frac{t}{s}}+\sqrt{\frac{t}{s}-1}\right)\right|_1^t \sim \frac{\sqrt{-1}}{\sqrt t} \log t.\end{equation}

\begin{equation} \label{II} 
\int_1^t\frac{  ds}{s^2\sqrt{s-t}}=\left.\frac{\sqrt{s-t}}{ts}\right|_1^t +\frac{1}{2t} \int_1^t\frac{  ds}{s\sqrt{s-t}}= \frac{-\sqrt{1-t}}{t}+\frac{\sqrt{-1}}{2t\sqrt t} \log t \sim  -\frac{\sqrt{-1}}{\sqrt{t}}.
\end{equation}
In particular, \eqref{II} yields the boundedness of $$\int_1^t\frac{ \sqrt{t} }{s\sqrt{ s-t} }  \Hi(s^{-1})ds.$$
More generally, for any integer $\alpha \geq 1$, as $t \to \infty$, it holds that
\begin{equation} \label{alpha}
\int_1^{t}\frac{ ds }{s^{\alpha +1} \sqrt{s-t}}  = \left.\frac{\sqrt{s-t}}{\alpha ts^\alpha}\right|_1^t +\frac{2\alpha -1}{2\alpha t} \int_1^t\frac{  ds}{s^\alpha\sqrt{s-t}} \sim  -\frac{\sqrt{-1}}{\alpha \sqrt{t}}.
\end{equation}
Similar to the proof of Lemma \ref{period-A-2},  we use the choice of cycles $\gamma_1, \gamma_2$ specified in Section 2 to estimate the four entries of $B$ one by one.
 Firstly, by \eqref{I} and \eqref{II}, for $|s| \in (1, |t|)$,
\begin{align*} 
B_{11}=&-2\int_{\lambda}^1\frac{dx}{\sqrt{x(x-\lambda)(x-1)(x-a)(x-b)}} \\
=&-2\int_t^1 \frac{-\sqrt{s} \sqrt{t} ds}{\sqrt{ab(s-1)(s- a^{-1})(s- b^{-1})(s-t) }}\\
= &-2\int_1^t\frac{\sqrt{s} \sqrt{t} ds}{\sqrt{ (s-t)ab}} \frac{1}{s\sqrt{s}}  \left(1+\Hi({{s^{-1}}})\right)\left(1+\Hi({{s^{-1}}})\right)\left(1+\Hi({{s^{-1}}})\right)\\
= & -\frac{2\sqrt{t}}{\sqrt{ab}}\int_1^t\frac{  ds}{s\sqrt{ s-t} }  \left(1+\Hi({{s^{-1}}})\right) \\
\sim & \frac{-2\log \lambda }{\sqrt{-ab}},
\end{align*} 
as $\lambda \to 0$.
Secondly, by \eqref{II}, 
$$
B_{21}=  -\frac{2\sqrt{t}}{\sqrt{ab}}\int_1^t\frac{  ds}{s^2\sqrt{ s-t} }  \left(1+\Hi({{s^{-1}}})\right) \sim\frac{-2}{ \sqrt{-ab}}.
$$
Finally, similar to $A_{12}$,
$$
B_{12} \sim -2 \int_a^b \frac{dx}{x\sqrt{(x-1)(x-a)(x-b)}},
$$
and
$$
B_{22} \sim -2 \int_a^b \frac{dx}{\sqrt{(x-1)(x-a)(x-b)}}.
$$

\end{proof}

Combining Lemmata \ref {period-A-2} and \ref {period-B-2} with \eqref{Bergman-2-node}, we get the asymptotics of the Bergman kernels.

 \begin{proof} [\bf Proof of Theorem \ref{potential-2-node}] 
Notice that the period is defined as
  \begin{equation} \label{period-c}
  c :=  \frac{\int_a^b\frac{dx}{\sqrt{(x-1)(x-a)(x-b)}}} {\int_1^a \frac{dx}{\sqrt{(x-1)(x-a)(x-b)}}}  =   \tau\left(\frac{1-b}{1-a}\right),
  \end{equation} 
  where $\tau(\cdot)$ is the inverse of the elliptic modular lambda function. 
On the normalization of the nodal curve $X_0$, by \eqref{Bergman-reg-node}, in the local coordinate $z=\sqrt x$, the Bergman kernel is exactly 
$\kappa_ 0=k_0(z) dz\otimes d\bar z$, where
  \begin{equation}  \label{reg-2-node}
k_0(z)=\frac{ 4 |z |^2  }{  (\Imagine c) |(z^2-1)   (z^2-a)(z^2-b)|}.
  \end{equation} 
By Lemma \ref{period-A-2}, as $\lambda\to 0$,
   $$
   A^{-1}
 \sim \left(
    \begin{array}{cc}
      \frac{-2\pi}{\sqrt{ab}} & c_2 \\
        \Hi(\lambda)  & \frac{-2c_1}{\sqrt{ab}}  
    \end{array}
  \right)^{-1} = \left(
    \begin{array}{cc}
      \frac{-\sqrt{ab}}{2\pi}  & \frac{ \sqrt{ab} c_2  } { {2\pi  d_2 } } \\
     \Hi(\lambda) & \frac{\sqrt{ab}} {-2c_1}    
    \end{array}
  \right).
  $$
   Let $Z=A^{-1}B$ denote the normalized period matrix of $X _{\lambda}$. Then, it follows that $$
Z \sim 
  \left(
    \begin{array}{cc}
       \frac{-\sqrt{-1}}{\pi} \log \lambda    
       &  \frac{  ab c_2 d_2 + \sqrt{ab}{2c_1 d_1}    }{   {-4\pi c_1}  }   \\
    { -\sqrt{-1}}  {  c_1}^{-1}     &  c
    \end{array}
  \right), \quad \Imagine Z\sim \left(
    \begin{array}{cc}
      -\frac{\log |\lambda|}{\pi }  &   -\Real \{c_1^{-1}\}  \\
-\Real \{c_1^{-1}\}  & \Imagine c
    \end{array}
  \right).
$$ 
So, as $\lambda\rightarrow 0$, it holds that
$$ 
(\Imagine Z)^{-1}\sim    \left(
    \begin{array}{cc}
   \frac{\pi}{-     \log |\lambda|  }          & \frac{ -\Real \{c_1^{-1}\} \pi  }{   ( \log |\lambda|) \Imagine c }  \\
     \frac{ -\Real \{c_1^{-1}\} \pi  }{   ( \log |\lambda|) \Imagine c }  &      ({\Imagine c })^{-1}
    \end{array}
  \right).
$$ 
By \eqref{Bergman-2-node} and \eqref{reg-2-node},
$\kappa_{X_\lambda} \to \kappa_0.
$ Moreover, in the local coordinate $z=\sqrt x$ near $(0, 0)$,
$$
{k}_{\lambda}(z) - {k}_{0}(z)  \sim \frac{4 \pi}{ |(z^2-1)(z^2-a)(z^2-b) (z^2-\lambda)|}   \frac{  1  +  \frac{\Real \{c_1^{-1}\}}{ \Imagine c } ({z}^{2}+\overline z^{2 })     }{ - \log |\lambda|},
  $$
    which yields the conclusion.
    
 \end{proof}

Since the moduli space of genus-two curves is 3 dimensional, one may consider the more general family $X_{\lambda, a,b}:=\{y^2=x(x-1)(x-\lambda)(x-a)(x-b))\}$, parameterized by three distinct complex numbers $\lambda, a, b \in \mathbb C\setminus \{0,1\}$. As $\lambda, a$ or $b$ tends to $0, 1$ or $\infty$, or towards one another, $X_{\lambda, a, b}$ will become singular. In our setting, we fix the other two parameters $a$ and $b$, and move $\lambda$ only, so the precise asymptotic coefficients we have obtained depend on both $a$ and $b$.

\section{Non-separating node: hyperelliptic and general curves} 
This section is devoted to the proof of Theorem \ref{potential-g-node}, as a generalization of Theorem \ref{potential-2-node} towards the hyperelliptic case. When $\lambda\in \mathbb C\setminus \{0, 1, a_1, \ldots, a_{2g-2}\}$, $X_{\lambda}$ defined in \eqref{X-node} has genus $g$. To prove Theorem \ref{potential-g-node}, we use the choice of cycles $\delta_j, \gamma_j$ specified in Section 2, and need to analyze on $X_{\lambda}$ the asymptotics of its $A$-period matrix and $B$-period matrix, denoted by $A$ and $B$, respectively. Meanwhile, for the normalization $Y:=\{(x,y)\in \mathbb C^2\, |\,y^2=(x-1)P(x)\} $ of genus $g-1$, denote its $A$-period matrix and $B$-period matrix by $A_0$ and $B_0$, respectively. 

\begin{lemma} \label{period-A-hyp-node} Under the same assumptions as in Theorem \ref{potential-g-node}, as $\lambda\rightarrow 0$, it holds that
$$ 
A \sim \left(
    \begin{array}{cccccc}
     \frac{2\pi}{\sqrt{P(0) }} & \Hi(1) \\
* &      A_0\\
    \end{array}
  \right),
$$
where $*=(\Hi( \lambda), 0,\ldots,0)^T$ is a column vector with $g-1$ rows.
\end{lemma}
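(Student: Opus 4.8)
The plan is to mimic the genus-two computation in Lemma~\ref{period-A-2}, carrying out a change of variables near the vanishing cycle and Maclaurin-expanding in $\lambda$. Recall that on $X_\lambda$ one has $\omega_i = x^{i-1}dx/\sqrt{x(x-\lambda)(x-1)P(x)}$ and on $Y$ one has $\tilde\omega_i = x^{i-1}dx/\sqrt{(x-1)P(x)}$ for $i=1,\dots,g-1$. First I would fix the homology basis: let $\delta_1$ be the vanishing cycle enclosing only the two branch points $0$ and $\lambda$, and let $\delta_2,\dots,\delta_g$ be cycles on $X_\lambda$ that degenerate, as $\lambda\to 0$, to a homology basis $\tilde\delta_1,\dots,\tilde\delta_{g-1}$ of $Y$ (this is the standard picture for a non-separating node, and the cycles $\delta_2,\dots,\delta_g$ can be chosen to avoid a neighborhood of $\{0,\lambda\}$). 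With this choice the matrix $A=(A_{i,j})_{1\le i,j\le g}$, $A_{i,j}=\int_{\delta_j}\omega_i$, splits into the claimed block shape, and it remains to estimate each block.

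The four blocks are handled as follows. For the $(1,1)$ entry $A_{1,1}=\int_{\delta_1}\omega_1$, I substitute $s=x^{-1}$, $t=\lambda^{-1}$ exactly as in Lemma~\ref{period-A-2}: the dual cycle $-\tilde\delta_1$ then encircles $\{0,1,a_1^{-1},\dots,a_{2g-2}^{-1}\}$ with $|s|\in(1,|t|)$, and expanding $(s-t)^{-1/2} = |t|^{-1/2}(1+s/2t+O(s^2/t^2))$ together with the Maclaurin expansions of $(s-a_j^{-1})^{-1/2}$ reduces the leading term to $\frac{1}{\sqrt{-P(0)^{-1}}}\oint \frac{ds}{s}\cdot(1+O(\lambda)) = \frac{2\pi}{\sqrt{P(0)}}(1+O(\lambda))$, using $P(0)=\prod(-a_j)$ up to the leading coefficient normalization. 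For the first-column lower block $*=(A_{2,1},\dots,A_{g,1})^T$, the same substitution applied to $\omega_i$ produces an extra factor $s^{-(i-1)}$ under the integral; for $i\ge 2$ the residue at $s=0$ of $s^{-(i-1)}(1+s/2t+\cdots)\,ds/s$ picks up the coefficient of $s^{i-1}$, which is $O(t^{-(i-1)})=O(\lambda^{i-1})$, hence $O(\lambda)$, giving $*=(O(\lambda),0,\dots,0)^T$ as claimed. For the upper-right block and the $(g-1)\times(g-1)$ lower-right block, the cycles $\delta_j$ ($j\ge 2$) stay away from $\{0,\lambda\}$, so I expand $\big(x(x-\lambda)\big)^{-1/2} = x^{-1}(1-\lambda/2x + \cdots)$ uniformly on $\delta_j$; integrating term by term gives $\int_{\delta_j}\omega_i = \int_{\tilde\delta_j}\tilde\omega_i + O(\lambda)$ for $i=1,\dots,g-1$, i.e.\ the block converges to $A_0$, while the entries $\int_{\delta_j}\omega_1$ for the top row converge to finite $O(1)$ quantities (the analogue of $c_2$). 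Assembling the four estimates yields the stated block form with the indicated error orders.

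The main obstacle is not any individual integral—each is a routine residue-plus-Maclaurin computation as in Section~3—but rather the bookkeeping needed to justify that the chosen cycles $\delta_2,\dots,\delta_g$ genuinely degenerate to a valid homology basis of $Y$ and that the intersection-number normalization of the excerpt is preserved in the limit, so that the limiting matrix is exactly $A_0$ rather than $A_0$ composed with some symplectic change of basis. I would address this by invoking the standard Picard--Lefschetz description of the vanishing cycle for a non-separating node together with the fact (already used implicitly in Section~3) that away from the node the family $X_\lambda\to X_0$ is a smooth deformation, so the cycles disjoint from the node deform continuously; the uniform expansion of $(x(x-\lambda))^{-1/2}$ on compact sets bounded away from $0$ then gives the term-by-term convergence of the integrals with an $O(\lambda)$ remainder. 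A secondary point requiring a little care is the branch choice of the square root when passing the factor $\sqrt{x(x-\lambda)}$ through the covering map $z=\sqrt{x}$, but this only affects overall signs and is fixed consistently with the conventions of \eqref{basis} and \eqref{homology basis}.
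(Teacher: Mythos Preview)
Your proposal is correct and follows essentially the same approach as the paper's own proof: the substitution $s=x^{-1}$, $t=\lambda^{-1}$ on the vanishing cycle $\delta_1$ to extract the residue for the first column, and the Maclaurin expansion $(x(x-\lambda))^{-1/2}=x^{-1}(1+\lambda/2x+\cdots)$ on the cycles $\delta_j$ ($j\ge 2$) to recover $A_0$ in the lower-right block. Your added discussion of why $\delta_2,\dots,\delta_g$ degenerate to a homology basis of $Y$ via Picard--Lefschetz is more careful than the paper, which simply takes this for granted; the only imprecisions are a harmless sign in the expansion of $(1-\lambda/x)^{-1/2}$ and a slight index shift (it is $\int_{\delta_j}\omega_{i}$ for $i\ge 2$ that converges to the $(i-1,j-1)$ entry of $A_0$, since the factor $x^{-1}$ lowers the power of $x$ by one).
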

 
  \begin{proof} 
  
  Firstly, $A_{11}=\int_{\delta_1}\omega_1$, where $\delta_1$ only contains $0$ and $\lambda$. 
  By the substitutions $t={\lambda}^{-1}$ and $s= {x}^{-1}$,   we get the dual cycle $\tilde \delta_1$ which contains $\{\infty, t \}$ so that $-\tilde\delta_1$ contains $\{0, 1, {a_1}^{-1}, \ldots,  a_{2g-2}^{-1} \}$   for $|s| \in (1, |t|)$. As $\lambda\rightarrow 0$,
\begin{align*}
A_{11} & =
\int_{-\tilde\delta_1} \frac{ ds}{ \sqrt{-1} s\sqrt{P(0)}} \left( 1+ \Hi \left(\frac{s}{{t}}\right)\right) \left(1+\Hi({{s^{-1}}})\right) \\
& \sim  \int_{-\tilde\delta_1} \frac{   ds}{ \sqrt{-1} s\sqrt{P(0)}}  \left(1+\Hi({{s^{-1}}})\right) \\
& = \int_{-\tilde\delta_1} \frac{ ds}{ \sqrt{-1} s\sqrt{P(0)}} \\
&= \frac{2\pi  }{  \sqrt{P(0)}}.
\end{align*}

\begin{align*}
A_{21}&  =\int_{-\tilde\delta_1} \frac{  ds}{ \sqrt{-1} s^{2} \sqrt{P(0)}} \left( 1+ \Hi \left(\frac{s}{{t}}\right)\right) \left(1+\Hi({{s^{-1}}})\right) \\
&= \int_{-\tilde\delta_1} \frac{ ds}{ \sqrt{-1} \sqrt{P(0)}} \left( {{s^{-2}}} + \Hi \left(\frac{1}{{ts^{i-1}}}\right)\right) \left(1+\Hi({{s^{-1}}})\right)\\
&= \int_{-\tilde\delta_1} \frac{ ds}{ \sqrt{-1} \sqrt{P(0)}} \left(  + \Hi \left(\frac{1}{{ts}}\right)\right)\\
& =\Hi(\lambda).
\end{align*} 
For $3\leq i\leq g$,
\begin{align*}
A_{i1}&  =\int_{-\tilde\delta_1} \frac{  ds}{ \sqrt{-1} s^{i} \sqrt{P(0)}} \left( 1+ \Hi \left(\frac{s}{{t}}\right)\right) \left(1+\Hi({{s^{-1}}})\right) \\
&= \int_{-\tilde\delta_1} \frac{ ds}{ \sqrt{-1} \sqrt{P(0)}} \left( {{s^{-i}}} + \Hi \left(\frac{1}{{ts^{i-1}}}\right)\right) \left(1+\Hi({{s^{-1}}})\right)\\
&=0.
\end{align*} 
 Secondly, since $\delta_2$ contains only $0, \lambda, 1$ and $a_1$ (but $a_2,\ldots, a_{2g-2}$), it holds that
 \begin{align*}
A_{12}&= \int_{\delta_2} \frac{dx}{\sqrt{x(x-{\lambda})(x-1)P(x)}}\\
&=\int_{\delta_2} \frac{dx}{x \sqrt{(x-1) P(x) }} \left( 1 +\Hi \left(\frac{{\lambda} }{{x}}\right)\right) \\
&\sim \int_{\delta_2} \frac{dx}{x \sqrt{(x-1) P(x) }}.
\end{align*}  
In general, for $A_{i2} $, there is an extra $x^{i-1}$ in the original integrand above, so
$$
A_{i2} \sim \int_{\delta_2} \frac{x^{i-2} dx}{  \sqrt{(x-1) P(x) }}.
$$
Thirdly,   $\delta_j$ contains only $0, \lambda^2, \lambda, a_1, \ldots, a_{2j-3}$ (but $a_{2j-2}, \ldots, a_{2g-2}$). 
 In general, for $j\geq 2$, $A_{ij}$ is asymptotic to the same integrand along $\delta_j$ instead of along $\delta_2$, i.e., 
$$
A_{ij} \sim \int_{\delta_j} \frac{x^{i-2} dx}{  \sqrt{(x-1) P(x) }},
$$
which is exactly the same as the corresponding entry of $A_0$ when $i\geq 2$.

\end{proof}

\begin{lemma} Under the same assumptions as in Theorem \ref{potential-g-node}, as $\lambda\rightarrow 0$, it holds that
$$ B \sim \left(
    \begin{array}{cccccc}
    \frac{-2\sqrt{-1} }{\sqrt{P(0)}}  { \log \lambda}  & \Hi(1) \\
    \star & B_0 
        \end{array}
  \right),
$$
where $\star$ is a column vector whose entries are all $\frac{-2\sqrt{-1} }{\sqrt{P(0)}}$ with $g-1$ rows.
\end{lemma}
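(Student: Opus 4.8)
The plan is to mirror the structure of the proof of Lemma~\ref{period-B-2} and the proof of Lemma~\ref{period-A-hyp-node}, estimating the entries of $B$ block by block. The $B$-period matrix has entries $B_{i,j}=\int_{\gamma_j}\omega_i$, and the key point is that the cycle $\gamma_1$ is the one that ``links'' the vanishing loop, so its associated integrals pick up the logarithmic divergence, while the remaining $\gamma_j$ ($j\geq 2$) stay bounded and, in the limit, reproduce the entries of $B_0$.

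First I would treat the first column. For $B_{11}=-2\int_{\lambda}^1\frac{dx}{\sqrt{x(x-\lambda)(x-1)P(x)}}$, I would perform the same substitution $t=\lambda^{-1}$, $s=x^{-1}$ used in Lemma~\ref{period-A-hyp-node}, reducing the integrand to $\frac{\sqrt t\,ds}{\sqrt{-1}\,s\sqrt{s-t}\sqrt{P(0)}}(1+\Hi(s^{-1}))$, and then invoke the elementary asymptotic \eqref{I}, namely $\int_1^t\frac{ds}{s\sqrt{s-t}}\sim\frac{\sqrt{-1}}{\sqrt t}\log t$, to get $B_{11}\sim\frac{-2\sqrt{-1}}{\sqrt{P(0)}}\log\lambda$; the error terms are controlled exactly as in \eqref{II} and \eqref{alpha}, which guarantee boundedness of the $\Hi(s^{-1})$-contributions. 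For $B_{i1}$ with $i\geq 2$, the integrand carries an extra $x^{i-1}=s^{-(i-1)}$, so after the substitution one is looking at $\int_1^t\frac{ds}{s^i\sqrt{s-t}}\sim-\frac{\sqrt{-1}}{(i-1)\sqrt t}$ by \eqref{alpha}; tracking the constant $\frac{-2\sqrt t}{\sqrt{P(0)}}$ out front gives each $B_{i1}\sim\frac{-2\sqrt{-1}}{\sqrt{P(0)}}$, independent of $i$. This accounts for the column vector $\star$ with all entries $\frac{-2\sqrt{-1}}{\sqrt{P(0)}}$.

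Next I would handle the remaining columns $j\geq 2$. Here the relevant cycle $\gamma_j$ stays away from the nodal point, so in the factor $\sqrt{x-\lambda}$ one simply Taylor-expands $\sqrt{x-\lambda}=\sqrt{x}(1+\Hi(\lambda/x))$ on the compact range of integration, obtaining $B_{ij}\sim\int_{\gamma_j}\frac{x^{i-2}dx}{\sqrt{(x-1)P(x)}}$ for $i\geq 2$, which are precisely the entries of $B_0$, and $B_{1j}\sim\int_{\gamma_j}\frac{dx}{x\sqrt{(x-1)P(x)}}=\Hi(1)$ for the top row --- exactly as in the treatment of $A_{ij}$, $j\geq2$, in Lemma~\ref{period-A-hyp-node}. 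Assembling the blocks yields the claimed form of $B$.

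The main obstacle, as in the genus-two computation, is not any single integral but making the error bookkeeping uniform: one must check that after expanding all the factors $(x-a_k)^{-1/2}$ (or $(s-a_k^{-1})^{-1/2}$) via the Maclaurin expansion recorded after the proof of Lemma~\ref{period-A-2}, the accumulated $\Hi(s^{-1})$ terms in the $B_{11}$ and $B_{i1}$ integrals remain integrable against $\frac{\sqrt t}{s\sqrt{s-t}}$; this is exactly what \eqref{II} and \eqref{alpha} are designed to supply, so the argument goes through, but it requires care that the logarithmic divergence comes \emph{only} from the leading term and that every correction is genuinely $O(1)$ (or $o(\log\lambda)$). Once that is in place, the combination of this lemma with Lemma~\ref{period-A-hyp-node} gives the asymptotics of $Z=A^{-1}B$ and hence, via \eqref{Bergman-g-node} and \eqref{Bergman-reg-node}, the proof of Theorem~\ref{potential-g-node}.
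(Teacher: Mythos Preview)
Your proposal is correct and follows essentially the same route as the paper: the first column is handled via the substitution $s=x^{-1}$, $t=\lambda^{-1}$ together with the elementary asymptotics \eqref{I} and \eqref{alpha}, and the remaining columns by Taylor-expanding $(x-\lambda)^{-1/2}$ to recover the entries of $B_0$. One small remark: you correctly quote \eqref{alpha} as giving $\int_1^t\frac{ds}{s^i\sqrt{s-t}}\sim -\frac{\sqrt{-1}}{(i-1)\sqrt t}$, which would make $B_{i1}\sim\frac{-2\sqrt{-1}}{(i-1)\sqrt{P(0)}}$ rather than a value independent of $i$; the paper writes the constant uniformly as $\frac{-2\sqrt{-1}}{\sqrt{P(0)}}$, but either way these entries are $\Hi(1)$ and the discrepancy does not affect the proof of Theorem~\ref{potential-g-node}, where $\star$ enters only through the subleading coefficient.
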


  \begin{proof} 
  
  Firstly, by \eqref{I} and \eqref{II}, for $|s| \in (1, |t|)$,
      \begin{align*}
B_{11} &=-2\int_{\lambda }^1 \frac{dx}{\sqrt{x(x-{\lambda})(x-1)\cdot P(x)}} \\
& =-2\int_{\lambda}^1 \frac{dx}{\sqrt{x(x-{\lambda})}}\frac{1}{\sqrt{-P(0)}}(1+\Hi(x))\\
& = \frac{2\sqrt{t}}{\sqrt{P(0)}}\int_{1}^t \frac{ds}{s\sqrt{ s-t}} (1+\Hi(s^{-1})) \\
& \sim \frac{-2 }{\sqrt{P(0)}}  {\sqrt{-1}}  \log \lambda,
\end{align*}
where
the last equality holds due to  the substitutions $t={\lambda}^{-1}$ and $s= {x}^{-1}$.
Thus, for $2\leq i\leq g$, by \eqref{alpha},
$$
B_{i1}=-2\int_{\lambda }^1 \frac{x^{i-1}dx}{\sqrt{x(x-{\lambda})(x-1)P(x)}}= \frac{2\sqrt{t}}{\sqrt{P(0)}}\int_{1}^t \frac{ds}{s^i\sqrt{ s-t}} (1+\Hi(s^{-1}))\sim \frac{-2\sqrt{-1} }{\sqrt{P(0)}}.     
$$
For Column $j$, $2 \leq j \leq g$, we use Taylor series expansion of $\sqrt{x-{\lambda}}^{-1}$ to get that
 $$
B_{ij} \sim -2\int_{a_{2j-3}}^{a_{2j-2}} \frac{x^{i-2} dx}{  \sqrt{(x-1) P(x) }},
$$
which is exactly the same as the corresponding entry of $B_0$ when $i\geq 2$.

\end{proof}

Now we will give a proof of Theorem \ref{potential-g-node}.

\begin{proof} [\bf Proof of Theorem \ref{potential-g-node}]

 By Lemma \ref{period-A-hyp-node} and the block matrix inversion, we know that
$$ 
A^{-1} \sim \left(
    \begin{array}{cccccc}
   \frac {\sqrt{P(0) }}{2\pi}  & \Hi(1)  \\
     \Hi( \lambda) & A_0^{-1}
    \end{array}
  \right),
$$
where both $(\Hi(1))^T$ and $\lim\limits_{\lambda \to 0} \frac{\Hi( \lambda)}{\lambda}$ are finite column vectors with $g-1$ rows.
Therefore,  as $\lambda\to 0$,
$$
Z=A^{-1}B \sim  \left(
    \begin{array}{cccc}
   \frac{-\sqrt{-1} }{\pi}  { \log \lambda}    & \Hi( 1)\\
      A_0^{-1} \star & A_0^{-1} B_0
    \end{array}
  \right), 
  \quad 
  \Imagine Z \sim  \left(
    \begin{array}{cccc}
\frac{\log |\lambda|}  {-\pi}  & \Hi( 1) \\
     \Imagine (A_0^{-1} \star) &  \Imagine  Z_0
     \end{array}
  \right),
  $$
  and
  $$ (\Imagine Z)^{-1}\sim  \left(
    \begin{array}{cccc}
 \frac{-\pi}  {\log |\lambda|}    & \Hi (({\log |\lambda|})^{-1})     \\
   (\Imagine  Z_0 )^{-1}   \Imagine (A_0^{-1} \star)    \frac{\pi}  {\log |\lambda|}  &  (\Imagine  Z_0 )^{-1}   
    \end{array}
  \right).
  $$
  In fact, since $Z$ is symmetric, the off-diagonal block matrices in each matrix above concerning $Z$ are the transpose of each other.
By \eqref{Bergman-g-node} and \eqref{Bergman-reg-node}, as $\lambda \to 0$,
$\kappa_ {X_\lambda} \to \kappa_0.
$ Moreover, in the local coordinate $z=\sqrt x$ near $(0, 0)$, it holds that
$$
{k}_{\lambda}(z) - {k}_{0}(z) \sim
  \frac{4\pi}{|(z^2-\lambda)(z^2-1) P(z^2)|}  \frac{ 1- 2 \Real  \sum\limits_{i=1}^{g-1}   \left((\Imagine  Z_0 )^{-1}   \Imagine (A_0^{-1} \star)  \right)_{i} {z}^{2i}}{ - \log |\lambda|} 
  $$
  which yields the conclusion.

 \end{proof}

If we ignore the precise asymptotic coefficients, then the leading term growth in Theorem \ref{potential-g-node} corresponds to \cite [Proposition 3.2] {HJ}. The degeneration of  nodal non-hyperelliptic curves was also treated in \cite {HJ}\footnote{The author is grateful to Professor Z. Huang for bringing attention the paper \cite{HJ} during the 2016 Tsinghua Sanya International Mathematics Forum, where a preliminary version of this work was presented.}.

\section{Cusp I: genus-two curves}

In this section, we consider a family of genus two curves 
\begin{equation} \label{X-cuspI-2}
X_{\lambda}:=\{(x,y)\in \mathbb C^2\, |\,y^2=x(x^2-{\lambda}) (x-a)(x-b)\}, 
\end{equation}
where $a, b, \lambda$ are distinct complex numbers satisfying $0<|\lambda| <|a|<|b|$. As $\lambda \to 0$, $X_{\lambda}$ degenerates to a singular curve $X_0$ with an ordinary  cusp. The normalization of $X_0$ is an elliptic curve $\{(x,y)\in \mathbb C^2\, |\,y^2=x(x-a)(x-b) \} $, whose period is $\tau$ given in \eqref{period-tau}. Let 
$$
c_4:=-2\int_0^1\frac{ (x-1)dx}{ \sqrt{x(x-1)(x-2)}}
$$
 and let $$
  c_3 :=   
   \int_0^a \frac{ \sqrt{ab}dx}{\sqrt{x(x-a)(x-b)}} , \quad 
  $$
be a constant  depending on $a, b$. The Bergman kernels on $X_{\lambda}$ and on the normalization of $X_0$ are denoted by $\kappa_ {X_\lambda}$ and $\kappa_0$, respectively. In the local coordinate $z:=\sqrt{x}$ near $(0, 0)$, we write $\kappa_ {X_\lambda}=k_{\lambda}(z) dz\otimes d\bar z$, and $\kappa_ 0=k_0(z) dz\otimes d\bar z$. Then, our result on the asymptotic behaviour of the Bergman kernel $\kappa_{X_\lambda}$ with precise coefficients is stated as follows.

\begin{theorem} \label{potential-2-cuspI}  For $X_{\lambda}$ defined by \eqref{X-cuspI-2}, as $\lambda\rightarrow 0$, for small $|z| \neq 0$, it holds that
\item [(i)] 
$$
  {k}_{\lambda}(z) \to  {k}_0(z)    \left(\frac{\Imagine \tau     }{ |z^4  |}   +1\right), \quad i.e., \,
\kappa_{X_\lambda}  \not \to \kappa_0;
  $$
\item [(ii)] 
  $$
\log {k}_{\lambda}(z) - \log {k}_0(z)  -\log \left(\frac{\Imagine \tau     }{ |z^4  |}   +1\right)   \sim    \frac{   \Real \left\{   \lambda^{1/4} \frac{ c_4 } {  c_3   }  \right\}  (z^{2 } +\overline {z}^{2})}{ |z|^{4}  +{\Imagine \tau }   },
$$
where $\tau$ is the period (scalar) of the elliptic curve $\{y^2=x(x-a)(x-b)\}$.
 \end{theorem}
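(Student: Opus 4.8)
The plan is to follow the same pattern as the nodal genus-two case (Theorem \ref{potential-2-node}): first establish the asymptotics of the $A$- and $B$-period matrices of $X_{\lambda}$ in \eqref{X-cuspI-2} as $\lambda\to 0$, then invert to obtain the normalized period matrix $Z$ and hence $(\Imagine Z)^{-1}$, and finally substitute into the explicit Bergman-kernel formula \eqref{Bergman-2-cuspI}. The key structural difference from the nodal case is that the pinched cycle encircles the three nearby branch points $\{0,\sqrt{\lambda},-\sqrt{\lambda}\}$ rather than $\{0,\lambda\}$, so the relevant integral is of the form $\int x^{i-1}dx/\sqrt{x(x^2-\lambda)}$ over a shrinking loop. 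Rescaling $x=\sqrt{\lambda}\,u$ turns this into $\lambda^{(2i-3)/4}\int u^{i-1}du/\sqrt{u(u^2-1)}$ times $(P(0)^{-1/2}+O(\sqrt\lambda))$, so the $A$-period entries acquire fractional powers $\lambda^{1/4}$ rather than being $O(\lambda)$ — this is the source of the $\lambda^{1/4}$ terms in part $(ii)$. For the $B$-periods, the analogous integral from $\sqrt\lambda$ to $1$ (or along the appropriate cycle linking the cusp to a far branch point) will again produce a $\log\lambda$-type divergence in the $(1,1)$-entry; I expect a lemma stating $B\sim\begin{pmatrix}\ast\log\lambda & \ast\\ \ast & \ast\end{pmatrix}$ with the lower-right block governed by the elliptic curve $\{y^2=x(x-a)(x-b)\}$, exactly as $c,c_3$ are defined, and with the constants $c_4$ entering the sub-leading term of the first column after the rescaling $x=\sqrt\lambda u$, $P(x)=P(0)(1+O(\sqrt\lambda))$.

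Concretely, I would first prove two lemmata computing $A$ and $B$ to the needed order. For $A$: the $(1,1)$-entry tends to a nonzero constant $\propto 1/\sqrt{P(0)}$ (the $\int du/\sqrt{u(u^2-1)}$ period of the rescaled cusp), while $A_{21}$ is $O(\lambda^{1/4})$ with leading coefficient built from $c_4$ and $1/\sqrt{P(0)}$; the second column converges to the $A$-period data $(c_3$ and the elliptic period) of the normalization. For $B$: $B_{11}\sim(\text{const})\log\lambda/\sqrt{P(0)}$, $B_{21}$ converges to a constant, and the second column converges to the $B$-period data of the normalization; crucially the ratio of the converging pieces reproduces $\tau$ and the constant entering $c_3$. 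Then block-invert $A$ — because $A_{11}\to$ const $\ne 0$ and $A_{21}=O(\lambda^{1/4})$, the inverse has a benign $O(\lambda^{1/4})$ off-diagonal — and form $Z=A^{-1}B$. The $(1,1)$-entry of $Z$ behaves like $(\text{const})\log\lambda$, so $\Imagine Z_{11}\to\infty$ like $-\log|\lambda|$; the $(2,2)$-entry tends to $\tau$; the off-diagonal entries are $O(\lambda^{1/4})$. Inverting the $2\times2$ matrix $\Imagine Z$ then gives $(\Imagine Z)^{-1}_{2,2}\sim(\Imagine\tau)^{-1}$, $(\Imagine Z)^{-1}_{1,1}=O((\log|\lambda|)^{-1})$, and $(\Imagine Z)^{-1}_{1,2}=O(\lambda^{1/4})$ more precisely $\sim(\Imagine\tau)^{-1}\cdot(\text{the }\lambda^{1/4}\text{ correction to }Z_{12})$, whose coefficient is $\Real\{\lambda^{1/4}c_4/c_3\}$-shaped after the dust settles.

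Plugging these into \eqref{Bergman-2-cuspI} and into \eqref{reg-g-cusp}/\eqref{Bergman-reg-cusp} (with $P=(x-a)(x-b)$, giving $k_0(z)=4|z|^2/((\Imagine\tau)|(z^2-a)(z^2-b)|)$ up to the right normalization) yields part $(i)$: the denominators $|(z^2-a)(z^2-b)(z^4-\lambda)|\to|z^4(z^2-a)(z^2-b)|$, the numerator is dominated by the $(\Imagine Z)^{-1}_{2,2}|z|^4$ term plus the surviving $(\Imagine Z)^{-1}_{1,1}$ term which, unlike the nodal case, does \emph{not} vanish relative to the extra $|z^4|^{-1}$ factor coming from the cusp — this is precisely why $k_\lambda(z)\to k_0(z)(\Imagine\tau/|z^4|+1)$ rather than $k_0(z)$. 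For part $(ii)$, take logarithms, subtract the limiting expression, and track the leading correction: the $O((\log|\lambda|)^{-1})$ and $O(\lambda^{1/4})$ contributions compete, but since $\lambda^{1/4}\gg(\log|\lambda|)^{-1}$ near $\lambda=0$, the fractional-power term $\Real\{\lambda^{1/4}c_4/c_3\}(z^2+\bar z^2)/(|z|^4+\Imagine\tau)$ dominates and gives the stated asymptotic. The main obstacle will be bookkeeping the $\lambda^{1/4}$ corrections consistently through the rescaling $x=\sqrt\lambda u$ and the matrix inversion — in particular, verifying that the coefficient of $z^2+\bar z^2$ is exactly $\Real\{\lambda^{1/4}c_4/c_3\}/(|z|^4+\Imagine\tau)$ and not merely of that order requires computing the next term in the Taylor expansion of $\sqrt{x^2-\lambda}^{-1}$ and of $\sqrt{P(x)}^{-1}$ near the cusp, and matching it against the definitions of $c_3,c_4$; this is where the "both the singularity and complex structure contribute" phenomenon advertised in the abstract becomes visible and must be pinned down carefully.
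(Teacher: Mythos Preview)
Your overall template (period matrices $\to Z \to (\Imagine Z)^{-1} \to$ formula \eqref{Bergman-2-cuspI}) matches the paper, but the heart of the computation is wrong in a way that would derail the argument. You model Cusp~I on the nodal case and predict $B_{11}\sim(\text{const})\log\lambda$, hence $\Imagine Z_{11}\to\infty$ and $((\Imagine Z)^{-1})_{11}=O((\log|\lambda|)^{-1})$. That is not what happens. In \eqref{X-cuspI-2} \emph{three} branch points $\{-\sqrt\lambda,0,\sqrt\lambda\}$ collide, so \emph{both} basis cycles $\delta_1$ (around $\{-\sqrt\lambda,0\}$) and $\gamma_1$ (around $\{0,\sqrt\lambda\}$) shrink. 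A loop around all three branch points is not a cycle on the hyperelliptic cover, so your ``pinched cycle encircling $\{0,\sqrt\lambda,-\sqrt\lambda\}$'' does not exist. The paper's Lemmata~\ref{period-A-2-cuspI} and \ref{period-B-2-cuspI} show, via the substitution $x=(u-1)\sqrt\lambda$ (resp.\ $x=(1-u)\sqrt\lambda$), that $A_{11}\sim c_5\lambda^{-1/4}/(-\sqrt{ab})$ and $B_{11}\sim c_5\lambda^{-1/4}/\sqrt{-ab}$: they blow up at the \emph{same} rate $\lambda^{-1/4}$, there is no logarithm, and their ratio gives $Z_{11}\to\sqrt{-1}$, a finite limit. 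Consequently $((\Imagine Z)^{-1})_{11}\to 1$, not $0$.

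This is precisely the mechanism behind part~$(i)$: the extra $\Imagine\tau/|z|^4$ survives because $((\Imagine Z)^{-1})_{11}$ has a nonzero limit, not because of any balancing against a vanishing denominator as you suggest. It also fixes part~$(ii)$: since no $(\log|\lambda|)^{-1}$ term is present, the first correction is the genuine $O(\lambda^{1/4})$ off-diagonal contribution $\Real\{\lambda^{1/4}c_4/c_3\}$, and no comparison of scales is needed. (Incidentally, your comparison is also backwards: $\lambda^{1/4}\ll(\log|\lambda|)^{-1}$ as $\lambda\to 0$, so had your asymptotics been correct, the $(\log|\lambda|)^{-1}$ term would dominate and the theorem as stated would be false.) To repair the proof you must recompute $A$ and $B$ with the correct cycles around pairs of nearby branch points and recognize that Cusp~I behaves like a degenerating genus-one piece with bounded period ratio, not like a pinched handle.
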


To prove Theorem \ref{potential-2-cuspI}, we need the following two lemmata by analyzing the asymptotics of the $A$-period matrix and $B$-period matrix on $X_\lambda$. Our choice of cycles $\delta_j, \gamma_j$ is specified in Section 2.

\begin{lemma} \label{period-A-2-cuspI} Under the same assumptions as in Theorem \ref{potential-2-cuspI}, as $\lambda\rightarrow 0$, it holds that

  $$ 
  A \sim \left(
    \begin{array}{cc}
\frac{c_5}{-\sqrt{ab}} \lambda^{-1/4} & c_6 \\
      \frac{ c_4 }{-\sqrt{ab}} \lambda^{1/4} & \frac{-2c_3}{\sqrt{ab}}
    \end{array}
  \right),
  $$ 
  where $c_5:= -2 \int_0^1\frac{du}{ \sqrt{u(u-1)(u-2)}}$ and $c_6$ depends on $a, b$.
\end{lemma}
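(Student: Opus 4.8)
The plan is to estimate the four entries of the $A$-period matrix $A_{i,j}=\int_{\delta_j}\omega_i$ separately, following the change-of-variables scheme used in Lemma \ref{period-A-2}. For the first column, $\delta_1$ is the cycle enclosing only the branch points clustering near the origin, namely $0$ and $\pm\sqrt{\lambda}$ (the roots of $x(x^2-\lambda)$). The natural move is to rescale: set $x=\lambda^{1/2}u$, so that $x(x^2-\lambda)=\lambda^{3/2}u(u^2-1)$ and $dx=\lambda^{1/2}du$; then $(x-a)(x-b)=ab(1+\Hi(\lambda^{1/2}))$. This turns $A_{1,1}=\int_{\delta_1}\frac{dx}{\sqrt{x(x^2-\lambda)(x-a)(x-b)}}$ into $\lambda^{-1/4}(-\sqrt{ab})^{-1}\int_{\tilde\delta_1}\frac{du}{\sqrt{u(u^2-1)}}(1+\Hi(\lambda^{1/2}))$, and the leading constant is identified with $c_5$ after matching the contour $\tilde\delta_1$ around $\{-1,0,1\}$ to the stated integral $-2\int_0^1\frac{du}{\sqrt{u(u-1)(u-2)}}$ via a further shift $u\mapsto u-1$ (or the analogous real-path normalization for the hyperelliptic period). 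For $A_{2,1}=\int_{\delta_1}\frac{x\,dx}{\sqrt{\cdots}}$ the same substitution produces an extra factor $\lambda^{1/2}u$, yielding $\lambda^{1/4}(-\sqrt{ab})^{-1}\int_{\tilde\delta_1}\frac{u\,du}{\sqrt{u(u^2-1)}}(1+\Hi(\lambda^{1/2}))$, whose leading constant is $c_4$ (again matched to $-2\int_0^1\frac{(x-1)dx}{\sqrt{x(x-1)(x-2)}}$ after the shift).

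For the second column, $\delta_2$ encloses $0,\pm\sqrt{\lambda},a$ but not $b$; here the relevant small quantity is $\lambda/x^2$, so I would Taylor-expand $(x^2-\lambda)^{-1/2}=x^{-1}(1-\lambda/x^2)^{-1/2}=x^{-1}(1+\Hi(\lambda))$ uniformly on $\delta_2$ (which stays away from the origin). This gives $A_{1,2}=\int_{\delta_2}\frac{dx}{x\sqrt{x(x-a)(x-b)}}(1+\Hi(\lambda))\sim c_6$ and $A_{2,2}=\int_{\delta_2}\frac{dx}{\sqrt{x(x-a)(x-b)}}(1+\Hi(\lambda))\sim -2\int_0^a\frac{dx}{\sqrt{x(x-a)(x-b)}}$, which equals $-2c_3/\sqrt{ab}$ by the definition of $c_3$. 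Collecting the four asymptotics gives the claimed matrix.

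The main obstacle I anticipate is not the size of each entry but the bookkeeping of the branch of the square root and the precise orientation of the rescaled contour $\tilde\delta_1$, since the factor $\lambda^{\pm1/4}$ carries a branch ambiguity that must be fixed consistently with the chosen homology basis; getting the sign/phase conventions right is what makes the constants $c_4, c_5$ come out exactly as stated (rather than up to a fourth root of unity), and it is also what will matter later when these feed into the period matrix $Z=A^{-1}B$ and into the $\Real\{\lambda^{1/4}\,c_4/c_3\}$ term of Theorem \ref{potential-2-cuspI}. A secondary technical point is justifying that the error terms $\Hi(\lambda^{1/2})$ and $\Hi(\lambda)$ are uniform on the (fixed, after rescaling) contours, which follows from compactness of the contours and smooth dependence of the integrand on $\lambda$ away from the collapsing branch points, exactly as in the proof of Lemma \ref{period-A-2}.
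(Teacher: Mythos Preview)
Your overall strategy---rescale $x$ by $\sqrt{\lambda}$ on $\delta_1$ and Taylor-expand $(x^2-\lambda)^{-1/2}$ on $\delta_2$---is exactly what the paper does, and your treatment of the second column matches the paper's argument. There is, however, one concrete error in your plan for the first column: you take $\delta_1$ to be the cycle enclosing \emph{all three} branch points $\{0,\pm\sqrt{\lambda}\}$. A loop in $\mathbb{P}^1$ enclosing an odd number of branch points does not lift to a closed curve on the hyperelliptic double cover, so this is not a valid element of $H_1(X_\lambda,\mathbb{Z})$. Even if one proceeds formally, after your rescaling and shift $u\mapsto u-1$ the contour would enclose $\{0,1,2\}$, and such a contour does not collapse to the segment $[0,1]$; its integral is not $c_5$.

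The paper instead takes $\delta_1$ around the pair $\{-\sqrt{\lambda},0\}$ and substitutes $x=(u-1)\sqrt{\lambda}$ directly, sending this pair to $\{0,1\}$ and yielding $A_{11}\sim\frac{-2\lambda^{-1/4}}{-\sqrt{ab}}\int_0^1\frac{du}{\sqrt{u(u-1)(u-2)}}$ at once (and similarly $A_{21}$ with the extra factor $(u-1)\sqrt{\lambda}$ giving $c_4\lambda^{1/4}$). Your argument is repaired by the same choice: with $\delta_1$ around $\{-\sqrt{\lambda},0\}$, your rescaling $x=\sqrt{\lambda}\,u$ gives a contour around $\{-1,0\}$, and then your shift $u\mapsto u-1$ recovers exactly the paper's integral over $[0,1]$. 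Once this is fixed, your branch/orientation caveat is the only remaining bookkeeping, and the rest of your proposal coincides with the paper's proof.
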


\begin{proof} We estimate the four entries one by one. Firstly, let $\delta_1$ contain only $-\sqrt{\lambda}$ and $0$. By the Cauchy Integral Theorem and Taylor series expansion, we know that 
\begin{align*} A_{11}&=-2\int_{-\sqrt{\lambda}}^0\frac{dx}{\sqrt{x(x-\sqrt{\lambda})(x+\sqrt{\lambda})(x-a)(x-b)}}\\
&=\frac{-2}{-\sqrt{ab}}\int_{-\sqrt{\lambda}}^0\frac{dx}{\sqrt{x(x-\sqrt{\lambda})(x+\sqrt{\lambda})}}\left(1+\Hi(x)\right)\\
&=
 \frac{-2\lambda^{-1/4}}{-\sqrt{ab }}\int_0^1\frac{du}{ \sqrt{u(u-1)(u-2)}}\left(1+\Hi\left(\sqrt{\lambda}(u-1)\right)\right)\\
&\sim \frac{-2\lambda^{-1/4}}{-\sqrt{ab}}\int_0^1\frac{du}{ \sqrt{u(u-1)(u-2)}}=:\frac{c_5}{-\sqrt{ab }} \lambda^{-1/4},
\end{align*}
where the third equality holds due to the 
substitution
$x=(u-1) \sqrt{\lambda}$.
Secondly,  
\begin{align*}
A_{21}&= \frac{-2\lambda^{1/4}}{-\sqrt{ab }}\int_0^1\frac{(u-1)du}{ \sqrt{u(u-1)(u-2)}}\left(1+\Hi\left(\sqrt{\lambda}(u-1)\right)\right)\\
&\sim  \frac{-2 \lambda^{1/4}}{-\sqrt{ab}}\int_0^1\frac{(u-1)du}{ \sqrt{u(u-1)(u-2)}}:= \frac{ c_4 }{-\sqrt{ab}}\lambda^{1/4} .
\end{align*} 
Thirdly, let $\delta_2$ contain only $-\sqrt{\lambda}, 0, \sqrt{\lambda}$ and $a$ (but $b$). Then, 
\begin{align*}
A_{12}=& 
 \int_{\delta_2} \frac{dx}{\sqrt{x(x-a)(x-b)}} \frac{1}{\sqrt{x}} \left( 1+\Hi \left( \frac{\sqrt{\lambda}}{x}\right)\right) \frac{1}{\sqrt{x}} \left( 1+\Hi \left( \frac{\sqrt{\lambda}}{x}\right)\right)\\
\sim& \int_{\delta_2} \frac{dx}{x\sqrt{x(x-a)(x-b)}}=:c_6.
\end{align*}   
Lastly, 
\begin{align*}
A_{22}&= \int_{\delta_2} \frac{dx}{\sqrt{x(x-a)(x-b)}}\left( 1+\Hi \left( \frac{\sqrt{\lambda}}{x}\right)\right)\\
&\sim \int_{\delta_2} \frac{dx}{\sqrt{x(x-a)(x-b)}}\\
& =-2\int_{0}^a \frac{dx}{\sqrt{x(x-a)(x-b)}}=\frac{-2c_3}{\sqrt{ab}}.
\end{align*}   

\end{proof}

\begin{lemma} \label{period-B-2-cuspI} Under the same assumptions as in Lemma \ref{period-A-2-cuspI}, as $\lambda\rightarrow 0$, it holds that $$
  B \sim \frac{1}{\sqrt{-ab}}\left(
    \begin{array}{cc}
    c_5 \lambda^{-1/4}   & d_3 \sqrt{-ab}\\
    -c_4  \lambda^{1/4}  & d_4 \sqrt{-ab}
    \end{array}
  \right),
  $$ 
  where $d_3:=-2\int_a^b\frac{dx}{x\sqrt{x(x-a)(x-b)}}$ and $d_4:=-2\int_a^b\frac{dx}{\sqrt{x(x-a)(x-b)}}$. \end{lemma}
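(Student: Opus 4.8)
The plan is to compute the four entries of the $B$-period matrix for the genus-two curve \eqref{X-cuspI-2} by the same change-of-variables techniques used in Lemma \ref{period-A-2-cuspI}, but now integrating over cycles that enclose a branch cut reaching out to one of the larger branch points (so that a logarithmic-type divergence or, here, a $\lambda^{\pm 1/4}$ factor appears), rather than over the small vanishing cycle. Concretely, I would fix $\gamma_1$ to be the cycle dual to $\delta_1$, which must encircle a cut joining $\sqrt{\lambda}$ to (say) $a$; and $\gamma_2$ the cycle dual to $\delta_2$, encircling a cut joining $a$ to $b$.

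\textbf{Entries $B_{11}$ and $B_{21}$.} First I would write $B_{i1}=\int_{\gamma_1}\omega_i$ and deform $\gamma_1$ so that the integral becomes (up to sign and a factor of $2$) $-2\int_{\sqrt{\lambda}}^{a}\frac{x^{i-1}dx}{\sqrt{x(x^2-\lambda)(x-a)(x-b)}}$. The dominant contribution comes from the region $x$ near $0$, where the integrand behaves like $\frac{x^{i-1}}{\sqrt{ab}\,\sqrt{x(x^2-\lambda)}}$; the remaining factor $\frac{1}{\sqrt{(x-a)(x-b)}}=\frac{1}{\sqrt{ab}}(1+\Hi(x))$ is regular and contributes only lower-order corrections. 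Substituting $x=(u-1)\sqrt{\lambda}$ as in Lemma \ref{period-A-2-cuspI} converts $\int_{\sqrt{\lambda}}^{\cdot}$ into an integral from $u=2$ outward, but the key point is that near the lower endpoint the substituted integrand reproduces exactly the constants $c_5$ and $c_4$ times $\lambda^{-1/4}$ and $\lambda^{1/4}$ respectively, with the same normalization $\int_0^1\frac{du}{\sqrt{u(u-1)(u-2)}}$ and $\int_0^1\frac{(u-1)du}{\sqrt{u(u-1)(u-2)}}$; the contribution of the rest of the path (bounded away from the cusp) is $\Hi(1)$ for $B_{11}$ and $\Hi(\lambda^{1/4})$ for $B_{21}$, hence subleading. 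This yields $B_{11}\sim \frac{c_5}{\sqrt{-ab}}\lambda^{-1/4}$ and $B_{21}\sim \frac{-c_4}{\sqrt{-ab}}\lambda^{1/4}$, matching the stated matrix.

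\textbf{Entries $B_{12}$ and $B_{22}$.} For the second column the cycle $\gamma_2$ stays away from the cusp, so I would simply Taylor-expand $\sqrt{x^2-\lambda}^{-1}=x^{-1}(1+\Hi(\lambda/x^2))$ on $\gamma_2$ and pass to the limit $\lambda\to 0$, exactly as in the computation of $A_{12},A_{22}$ in Lemma \ref{period-A-2-cuspI} (and of $B_{12},B_{22}$ in Lemma \ref{period-B-2}). This gives $B_{12}\sim -2\int_a^b\frac{dx}{x\sqrt{x(x-a)(x-b)}}=d_3$ and $B_{22}\sim -2\int_a^b\frac{dx}{\sqrt{x(x-a)(x-b)}}=d_4$, which are precisely the entries of the $B$-period matrix $B_0$ of the normalized elliptic curve $y^2=x(x-a)(x-b)$, consistent with Remark (c).

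\textbf{Main obstacle.} The routine estimates are not the difficulty; the delicate point is bookkeeping the branch of the square root and the orientation of $\gamma_1$ so that the sign in front of $c_5\lambda^{-1/4}$ (and of $c_4\lambda^{1/4}$) comes out correctly, i.e.\ so that when combined with Lemma \ref{period-A-2-cuspI} the product $Z=A^{-1}B$ has positive-definite imaginary part and reproduces the period $\tau$ of the limiting elliptic curve in the $(2,2)$-entry. I would pin this down by checking the intersection pairing $\delta_i\cdot\gamma_j=\delta_{i,j}$ against the chosen cuts, and by verifying the consistency relation that $\frac{1}{2}(\text{leading coefficient of }B_{11}/A_{11})$ is real and positive while the $(2,2)$-block limit reproduces \eqref{period-tau}; any sign error will be detected there. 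Once the two lemmata are in hand, the proof of Theorem \ref{potential-2-cuspI} proceeds as in Theorem \ref{potential-2-node}: invert the $2\times 2$ matrix $A$ using Lemma \ref{period-A-2-cuspI}, form $Z=A^{-1}B$, extract $\Imagine Z$ and $(\Imagine Z)^{-1}$ (here the $(1,1)$-entry of $\Imagine Z$ stays bounded, producing the non-convergence $\kappa_{X_\lambda}\not\to\kappa_0$ and the harmonic $\Hi(\lambda^{1/4})$ correction term rather than a $(-\log|\lambda|)^{-1}$ term), and substitute into \eqref{Bergman-2-cuspI}.
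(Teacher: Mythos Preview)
Your treatment of $B_{12}$ and $B_{22}$ is correct and coincides with the paper's. The genuine gap is in the choice of $\gamma_1$ and hence in the computation of $B_{11}$ and $B_{21}$.

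A loop encircling a cut joining $\sqrt{\lambda}$ to $a$ is an $a$-type cycle: in the paper's basis (where $\delta_2$ encloses $-\sqrt{\lambda},0,\sqrt{\lambda},a$) it is homologous to $\delta_2-\delta_1$, and it can be drawn disjoint from $\delta_1$ on the base sphere, so its intersection number with $\delta_1$ is $0$, not $1$. The correct dual cycle is the \emph{short} one enclosing only $0$ and $\sqrt{\lambda}$. That is exactly what the paper uses: it writes
\[
B_{i1}=-2\int_0^{\sqrt{\lambda}}\frac{x^{i-1}\,dx}{\sqrt{x(x^2-\lambda)(x-a)(x-b)}}
\]
and substitutes $x=(1-u)\sqrt{\lambda}$ (note the sign flip relative to the $\delta_1$ substitution $x=(u-1)\sqrt{\lambda}$ in Lemma~\ref{period-A-2-cuspI}). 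This maps $[0,\sqrt{\lambda}]$ back to $u\in[0,1]$ and reproduces verbatim the integrals defining $c_5$ and $c_4$, with the sign change under the square root supplying the extra factor $\sqrt{-1}$ that turns the denominator $-\sqrt{ab}$ into $\sqrt{-ab}$.

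Even setting the homology aside, your integral $-2\int_{\sqrt{\lambda}}^{a}$ does not produce the stated asymptotics. Your substitution sends the lower endpoint to $u=2$, and the integral ``from $u=2$ outward'' is not the $[0,1]$ integral defining $c_5$; you assert this identification without argument. More seriously, your claim that the contribution of the part of the path bounded away from the cusp is $\Hi(\lambda^{1/4})$ for $B_{21}$ is false: as $\lambda\to 0$,
\[
\int_\epsilon^a \frac{x\,dx}{\sqrt{x(x^2-\lambda)(x-a)(x-b)}}\;\longrightarrow\;\int_\epsilon^a \frac{dx}{\sqrt{x(x-a)(x-b)}}\neq 0,
\]
so along your cycle $B_{21}$ would be $\Hi(1)$, not of order $\lambda^{1/4}$. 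The $\lambda^{1/4}$ scale in the lemma appears precisely because the paper's $\gamma_1$ stays entirely in the shrinking neighborhood $[0,\sqrt{\lambda}]$ of the cusp.
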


\begin{proof}  Again, we estimate all the four entries one by one. Firstly, let $\gamma_1$ contain only $0$ and $\sqrt{\lambda}$. By Cauchy Integral Theorem, we can get that 
\begin{align*} 
B_{11}&
=-2\int_0^{\sqrt{\lambda}}\frac{dx}{\sqrt{x(x^2-{\lambda})(x-a)(x-b)}}\\
&=\frac{-2}{-\sqrt{ab}}\int_0^{\sqrt{\lambda}}\frac{dx}{\sqrt{x(x-\sqrt{\lambda})(x+\sqrt{\lambda})}}\left(1+\Hi(x)\right)\\
& =
 \frac{2\lambda^{-1/4}}{\sqrt{ab}}\int_{1}^0\frac{du}{ \sqrt{-u(u-1)(u-2)}}\left(1+\Hi((-u+1)\cdot \sqrt{\lambda})\right)\\
&\sim \frac{2\lambda^{-1/4}}{\sqrt{ab }}\int_{0}^1\frac{\sqrt{-1}du}{ \sqrt{u(u-1)(u-2)}}=:\frac{ c_5}{\sqrt{-ab}}\lambda^{-1/4},\end{align*}
where the third equality holds due to the 
substitution
$
x= (1-u) \sqrt{\lambda}.
$
Secondly,
\begin{align*}
B_{21}&=\frac{2\lambda^{-1/4}}{\sqrt{ab}}\int_{1}^0\frac{(-u+1)\sqrt{\lambda}du}{ \sqrt{-u(u-1)(u-2)}}\left(1+\Hi((-u+1)\cdot \sqrt{\lambda})\right)\\
&\sim  \frac{2\lambda^{1/4}}{\sqrt{ab}}\int_{0}^1\frac{(u-1)du}{ \sqrt{-u(u-1)(u-2)}}=:\frac{\sqrt{-1} \lambda^{1/4}  c_4}{\sqrt{ab}}.
\end{align*}
Thirdly, let $\gamma_2$ contain only $a$ and $b$. Then, it holds that \begin{align*}
B_{12}=& \int_{\gamma_2} \frac{dx}{\sqrt{x(x-\sqrt{\lambda}) (x+\sqrt{\lambda})(x-a)(x-b)}}\\
=&-2 \int_a^b \frac{dx}{\sqrt{x(x-a)(x-b)}} \frac{1}{\sqrt{x}} \left( 1+\Hi\left(\frac{\sqrt{\lambda}}{2x}\right)\right) \frac{1}{\sqrt{x}} \left( 1+\Hi\left(\frac{\sqrt{\lambda}}{2x}\right)\right)\\
\sim &  -2\int_a^b \frac{dx}{x\sqrt{x(x-a)(x-b)}} =:d_3.\end{align*}  
Lastly,
$$
B_{22}= \int_{\gamma_2}  \frac{dx}{\sqrt{x(x-a)(x-b)}}  \left( 1+\Hi\left(\frac{\sqrt{\lambda}}{2x}\right)\right) \sim  -2 \int_a^b \frac{dx}{\sqrt{x(x-a)(x-b)}}=:d_4.
$$
 
\end{proof}

Combining Lemmata \ref {period-A-2-cuspI} and \ref {period-B-2-cuspI} with \eqref{Bergman-2-cuspI}, we get the asymptotics of the Bergman kernels.

\begin{proof} [\bf Proof of Theorem \ref{potential-2-cuspI}] 
Notice that the period is defined as
  \begin{equation} \label{period-tau}
 \tau :=   \left\{ \frac{\int_a^b\frac{dx}{\sqrt{x(x-a)(x-b)}}} { \int_{0}^a \frac{dx}{\sqrt{x(x-a)(x-b)}} }  \right\}=  \tau\left(\frac{b}{a}\right),
  \end{equation} 
  where $\tau(\cdot)$ is the inverse of the elliptic modular lambda function. 
On the regular part of the cuspidal curve $X_0$, by \eqref{Bergman-reg-cusp}, in the local coordinate $z=\sqrt x$, the Bergman kernel is exactly 
$\kappa_ 0=k_0(z) dz\otimes d\bar z$, where
  \begin{equation}  \label{reg-2-cusp}
k_0(z)=\frac{4 }{  \Imagine  \tau \cdot \left|(z^2-a)(z^2-b)\right|}.
  \end{equation} 
By Lemma \ref{period-A-2-cuspI}, as $\lambda\to 0$,
$$
A^{-1}\sim   \left(
    \begin{array}{cc}
\frac{c_5}{-\sqrt{ab}} \lambda^{-1/4} & c_6 \\
      \frac{ c_4 }{-\sqrt{ab}} \lambda^{1/4} &  \frac{-2c_3}{\sqrt{ab}}
    \end{array}
    \right)^{-1} \sim \frac{ ab\lambda^{1/4}} { { 2c_3}  c_5} \left(
    \begin{array}{cc}
     \frac{-2c_3}{\sqrt{ab}} & -c_6 \\
       \frac{ c_4}{\sqrt{ab}} \lambda^{1/4} & \frac{c_5}{-\sqrt{ab}} \lambda^{-1/4}
    \end{array}
  \right).
  $$ 
     Let $Z=A^{-1}B$ denote the  normalized period  matrix of $X_{\lambda}$. Then, it follows that 
     
      \begin{align*}
Z &\sim \frac{  {ab}\lambda^{1/4}} { { 2c_3}  c_5} \left(
    \begin{array}{cc}
   \frac{-2c_3}{\sqrt{ab}}  & -c_6 \\
       \frac{c_4}{\sqrt{ab}} \lambda^{1/4} & \frac{c_5}{-\sqrt{ab}} \lambda^{-1/4}
    \end{array}
  \right) \frac{1}{\sqrt{-ab}}\left(
    \begin{array}{cc}
   c_5 \lambda^{-1/4}   & d_3\sqrt{-ab}\\
    -c_4 \lambda^{1/4}  & d_4 \sqrt{-ab}
    \end{array}
  \right)\\
&\sim \left(
    \begin{array}{cc}
     \sqrt{-1}   &    \sqrt{-1} \lambda^{1/4}  { \left(   d_3\sqrt{-ab} - \frac{c_6 d_4  ab}{2c_3}  \right) } {  c_5^{-1}}   \\
    \sqrt{-1} \lambda^{1/4} \frac{ c_4 } { {- c_3}   }  &   \tau
    \end{array}
  \right),\\
   \Imagine Z\sim&\left(
    \begin{array}{cc}
1 &    \Hi( \lambda^{1/4})\\
  \Real \left\{   \lambda^{1/4} \frac{ c_4 } {  {- c_3}  }  \right\}& \Imagine \tau
    \end{array}
  \right). 
\end{align*}
So, as $\lambda\rightarrow 0$,
$$ (\Imagine Z)^{-1}\sim    \left(
    \begin{array}{cc}
1   &  -({\Imagine \tau })^{-1}  \Hi( \lambda^{ {1}/{4}}) \\
    ({\Imagine \tau })^{-1}  \Real \left\{   \lambda^{1/4} \frac{  c_4 } {   c_3}  \right\}  &  ({\Imagine \tau })^{-1}
    \end{array}
  \right).$$ 
  By \eqref{Bergman-2-cuspII} and \eqref{reg-2-cusp},
  \begin{equation}
k_{\lambda}(z) \sim  4  \frac{1  + ({\Imagine \tau })^{-1}  \Real \left\{   \lambda^{1/4} \frac{  c_4 } {  c_3} \right\} (\overline z^{2 } + {z}^{2}) + (\Imagine \tau)^{-1}  |z|^{4}  }{ |(z^2-a)(z^2-b) (z^2-\lambda)(z^2-\lambda^2)|} \to    \left(\frac{\Imagine \tau     }{ |z^4  |}   +1\right) k_0 (z),
  \end{equation}
  which means that 
$\kappa_{X_\lambda}  \not \to \kappa_0.
$ Moreover, in the local coordinate $z=\sqrt x$ near $(0, 0)$,
$$
{k}_{\lambda}(z) - \left(\frac{\Imagine \tau     }{ |z^4  |}   +1\right) k_0 (z) \sim \frac{ 4  ({\Imagine \tau })^{-1}  \Real \left\{   \lambda^{1/4} \frac{  c_4 } {   c_3}  \right\} (\overline z^{2 } + {z}^{2}) }{ | z^4(z^2-a)(z^2-b) |}   ,
  $$
    which yields the conclusion.

     \end{proof}

\section{Cusp I: hyperelliptic curves}

This section is devoted to the proof of Theorem \ref{potential-g-cuspI}. When $\lambda\in \mathbb C\setminus \{0, a_1, \ldots, a_{2g-2}\}$, $X_{\lambda}$ defined in \eqref{X-cuspI} has genus $g$. To prove Theorem \ref{potential-g-cuspI}, we use the choice of cycles $\delta_j, \gamma_j$ specified in Section 2, and need to analyze on $X_{\lambda}$ the asymptotics of its $A$-period matrix and $B$-period matrix, denoted by $A$ and $B$, respectively. Meanwhile, for the normalization $Y:=\{(x,y)\in \mathbb C^2\, |\,y^2=xP(x)\} $ of genus $g-1$, denote its $A$-period matrix and $B$-period matrix by $A_0$ and $B_0$, respectively.

\begin{lemma} \label{period-A-hyp-cuspI} Under the same assumptions as in Theorem \ref{potential-g-cuspI}, as $\lambda\rightarrow 0$, it holds that 
 $$
A \sim \left(
    \begin{array}{cccccc}
c_5 \lambda^{-\frac{1}{4}} \frac{1}{\sqrt{P(0)}}  & \Hi(1) \\
\diamond &      A_0\\
    \end{array}
  \right),
$$
where $\diamond$ is a column vector with $g-1$ rows whose entries are given by \eqref{*} below.
 \end{lemma}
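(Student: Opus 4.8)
The plan is to prove Lemma~\ref{period-A-hyp-cuspI} by the same entry-by-entry method already used for Lemma~\ref{period-A-hyp-node} in the nodal case and for Lemma~\ref{period-A-2-cuspI} in the genus-two case, the only new feature being that the branch points clustering near the cusp are now $\{0,\sqrt{\lambda},-\sqrt{\lambda}\}$ rather than $\{0,\lambda\}$, which is what produces the $\lambda^{-1/4}$ scale. Writing $A=(\int_{\delta_j}\omega_i)_{1\le i,j\le g}$ with $\omega_i=x^{i-1}dx/\sqrt{x(x^2-\lambda)P(x)}$, I would split the estimate of $A$ into the first column (the one attached to the shrinking cycle around $\{0,\pm\sqrt{\lambda}\}$) and the columns $j\ge 2$ (attached to cycles that stay in a region bounded away from that cluster, enclosing only the fixed roots $a_k$). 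The block form in the statement is then exactly the separation between the contribution of the degenerating cluster and the part that survives on the normalization $Y=\{y^2=xP(x)\}$.

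For the first column, take $\delta_1$ enclosing only $-\sqrt{\lambda}$ and $0$, so that $A_{i1}=-2\int_{-\sqrt{\lambda}}^{0}x^{i-1}\,(x(x-\sqrt{\lambda})(x+\sqrt{\lambda})P(x))^{-1/2}\,dx$. On this shrinking interval $P(x)^{-1/2}=P(0)^{-1/2}(1+\Hi(x))$, and the rescaling $x=(u-1)\sqrt{\lambda}$ normalizes the interval to $u\in[0,1]$ and turns $x(x-\sqrt{\lambda})(x+\sqrt{\lambda})$ into $\lambda^{3/2}u(u-1)(u-2)$; collecting the powers of $\lambda$ gives $A_{i1}\sim \frac{-2\lambda^{(2i-3)/4}}{\sqrt{P(0)}}\int_0^1\frac{(u-1)^{i-1}\,du}{\sqrt{u(u-1)(u-2)}}\,(1+\Hi(\sqrt{\lambda}))$. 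For $i=1$ this is the leading entry $c_5\lambda^{-1/4}/\sqrt{P(0)}$, and for $i\ge 2$ these are the entries of the column vector $*$ recorded in \eqref{*}, of successive orders $\lambda^{1/4},\lambda^{3/4},\dots$, hence all vanishing as $\lambda\to 0$. For columns $j\ge 2$, on the relevant cycles $x$ is bounded away from $\{0,\pm\sqrt{\lambda}\}$, so $\sqrt{x^2-\lambda}=x\sqrt{1-\lambda/x^2}=x(1+\Hi(\lambda))$ and therefore $\omega_i=\frac{x^{i-2}\,dx}{\sqrt{xP(x)}}(1+\Hi(\lambda))$; for $i\ge 2$ the right-hand side is, up to the $\Hi(\lambda)$ error, the $i{-}1$-st Abelian differential of $Y$, so $A_{ij}\to (A_0)_{i-1,\,j-1}$ and the lower-right block is $A_0$, while for $i=1$ the limit is the finite constant $\int_{\delta_j}\frac{dx}{x\sqrt{xP(x)}}$, giving the $\Hi(1)$ top row.

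The main obstacle, exactly as in Lemma~\ref{period-A-2-cuspI}, is not conceptual but consists in making the error terms uniform and of the advertised orders: one must verify that the Taylor remainders of $P(x)^{-1/2}$ and of $(1-\lambda/x^2)^{-1/2}$ can be integrated over the cycles — in the first column over a cycle that is itself degenerating — without spoiling the leading asymptotics, and one must track the branch of the square root and the orientation of each $\delta_j$ so that the constants, in particular the signs in $c_5$ and in $\sqrt{P(0)}$, come out consistently with the genus-two normalization. One also needs the elementary fact that the model integrals $\int_0^1(u-1)^{i-1}(u(u-1)(u-2))^{-1/2}\,du$ converge despite the inverse-square-root endpoint singularities, which is what guarantees that the entries of $*$ are genuinely of order $\lambda^{(2i-3)/4}$ with finite coefficients. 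Once this bookkeeping is in place, assembling the four pieces gives the stated block form.
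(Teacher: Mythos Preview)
Your proposal is correct and follows essentially the same approach as the paper: the same substitution $x=(u-1)\sqrt{\lambda}$ on the shrinking cycle $\delta_1$ to extract the $\lambda^{(2i-3)/4}$ scale, and the same Taylor expansion $(x^2-\lambda)^{-1/2}=x^{-1}(1+\Hi(\lambda/x^2))$ on the remaining cycles to recover the $A_0$ block. One small imprecision: in the paper's homology convention the cycles $\delta_j$ for $j\ge 2$ do enclose the cluster $\{-\sqrt{\lambda},0,\sqrt{\lambda}\}$ together with $a_1,\dots,a_{2j-3}$ (so that in the limit they become the standard cycles on $Y$ enclosing $0,a_1,\dots$), rather than enclosing only the $a_k$ as you wrote; but since your analytical condition---that the \emph{path} of $\delta_j$ stays bounded away from the origin---is the correct one, this does not affect the argument.
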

 
  \begin{proof} We will estimate all the $g\times g$ elements one by one. Firstly, as $\lambda\to 0$, it holds that \begin{align*}
A_{11}&=-2\int_{-\sqrt{\lambda}}^0 \frac{dx}{\sqrt{x(x^2-{\lambda})P(x)}} \\
&=-2\int_{-\sqrt{\lambda}}^0 \frac{dx}{\sqrt{x(x^2-{\lambda})}}\frac{1}{\sqrt{P(0)}}(1+\Hi(x))\\
& =  -2\lambda^{-\frac{1}{4}} \int_{0}^{1} \frac{  du}{\sqrt{ u(  u-1)(  u-2 )}} \frac{(1+\Hi((u-1)\sqrt{\lambda}))}{\sqrt{P(0)}}\\
&\sim-2 \lambda^{-\frac{1}{4}}\int_{0}^{1} \frac{  du}{\sqrt{ u(  u-1)(  u-2 )}} \frac{1}{\sqrt{P(0)}} =: \lambda^{-\frac{1}{4}} \frac{c_5}{\sqrt{P(0)}},
\end{align*}
where the third equality holds due to the 
substitution
$
x= (u-1) \sqrt{\lambda}.
$
In general, for $a_{i1}$, $2\leq i \leq g$, there is an extra $x^{i-1}$ in the original integrand and thus an extra $\sqrt{\lambda}^{i-1} (u-1)^{i-1}$ in the numerator of the above last expression, so
\begin{equation} \label{*}
A_{i1}\sim -2\int_{0}^{1} \frac{ (u-1)^{i-1} du}{\sqrt{ u(u-1)(u-2)}} \frac{\sqrt{\lambda}^{\frac{1}{2}+i-2}}{\sqrt{P(0)}}.
\end{equation}
Secondly, let $\delta_2$ contain only $-\sqrt{\lambda}, 0, \sqrt{\lambda}$ and $a_1$ (but $a_2,\ldots, a_{2g-2}$). Then,
\begin{align*}
A_{12}&= \int_{\delta_2} \frac{dx}{\sqrt{x(x^2-\lambda)P(x)}}\\
&=\int_{\delta_2} \frac{dx}{x \sqrt{x P(x) }} \left( 1 +\Hi \left(\frac{{\lambda} }{{x}^2}\right)\right) \\
& \sim \int_{\delta_2} \frac{dx}{x \sqrt{x P(x) }}.
\end{align*}  
  For general $a_{i2} $, there is an extra $x^{i-1}$ in the original integrand, so
$$
A_{i2} \sim \int_{\delta_2} \frac{x^{i-2} dx}{  \sqrt{x P(x) }}.
$$
 In general, for $j\geq 2$, let $\delta_j$ contain only $-\sqrt{\lambda}, 0, \sqrt{\lambda}, a_1, \ldots, a_{2j-3}$ (but $a_{2j-2}, \ldots, a_{2g-2}$). 
The entry $A_{ij}$ is asymptotic to the same integrand along $\delta_j$ instead of along $\delta_2$, i.e., 
$$
A_{ij} \sim \int_{\delta_j} \frac{x^{i-2} dx}{  \sqrt{x P(x) }},
$$
which is exactly the same as the corresponding entry of $A_0$ when $i\geq 2$.

\end{proof}

\begin{lemma}  \label{period-B-hyp-cuspI}  Under the same assumptions as in Theorem \ref{potential-g-cuspI}, as $\lambda\rightarrow 0$,
it holds that
$$ B \sim \left(
    \begin{array}{cccccc}
c_5 \lambda^{-\frac{1}{4}}  \frac{\sqrt{-1}   }{ \sqrt{P(0)}} & \Hi(1) \\
    \diamondsuit & B_0 
        \end{array}
  \right),
$$
where $\diamondsuit$ is a column vector whose entries are $  B_{i1} = (-1)^{i-1} \sqrt{-1} A_{i1}$ for $A_{i1}$ in \eqref{*} and $2\leq i\leq g$. 
\end{lemma}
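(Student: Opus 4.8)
The plan is to follow exactly the template established in the proof of Lemma \ref{period-A-hyp-cuspI}, now computing the $B$-period integrals over the cycles $\gamma_j$ rather than the $\delta_j$, and exploiting the parallel structure between $A$- and $B$-periods already witnessed in the genus-two case (Lemmata \ref{period-A-2-cuspI} and \ref{period-B-2-cuspI}). First I would treat the entry $B_{11}=-2\int_0^{\sqrt{\lambda}}\omega_1$, where $\gamma_1$ encircles only $0$ and $\sqrt{\lambda}$: factoring $P(x)^{-1/2}=P(0)^{-1/2}(1+\Hi(x))$ out of the integrand, substituting $x=(1-u)\sqrt{\lambda}$ as in the proof of Lemma \ref{period-B-2-cuspI}, and extracting the leading power $\lambda^{-1/4}$, one obtains $B_{11}\sim c_5\lambda^{-1/4}\sqrt{-1}/\sqrt{P(0)}$, the factor $\sqrt{-1}$ arising precisely because the $u$-interval of integration for $\gamma_1$ is the reflection of the one for $\delta_1$ (so $\sqrt{u(u-1)(u-2)}$ picks up a branch factor of $\sqrt{-1}$). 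This is the $(1,1)$-entry claimed.

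Next, for $B_{i1}$ with $2\le i\le g$, the integrand carries an extra $x^{i-1}$, hence an extra $\sqrt{\lambda}^{\,i-1}(1-u)^{i-1}$ after the substitution; comparing with the expression \eqref{*} for $A_{i1}$ and tracking the branch factor, one sees that the substitution $x=(1-u)\sqrt\lambda$ produces $(1-u)^{i-1}=(-1)^{i-1}(u-1)^{i-1}$ relative to the $A$-side substitution $x=(u-1)\sqrt\lambda$, together with one extra factor of $\sqrt{-1}$ from the square root, giving exactly $B_{i1}=(-1)^{i-1}\sqrt{-1}\,A_{i1}$ up to the asymptotic $\sim$; this is the column vector $\diamondsuit$. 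For the remaining columns $j\ge 2$, the cycle $\gamma_j$ stays away from the cluster $\{-\sqrt\lambda,0,\sqrt\lambda\}$, so one expands $\sqrt{x^2-\lambda}^{-1}=x^{-1}(1+\Hi(\lambda/x^2))$ and the integral converges, as $\lambda\to 0$, to the corresponding integral $\int_{\gamma_j}x^{i-2}dx/\sqrt{xP(x)}$ — that is, to the entries of $B_0$ when $i\ge 2$, and to some finite constants (the $\Hi(1)$ block) in the top row $i=1$. Assembling the four blocks gives the stated matrix.

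The only genuinely delicate point — and the one I would write out with care — is the bookkeeping of the branch of the square root on the degenerating cycles $\gamma_1$ (and $\delta_1$), which is what pins down both the factor $\sqrt{-1}$ in the $(1,1)$-entry and the sign $(-1)^{i-1}\sqrt{-1}$ relating $B_{i1}$ to $A_{i1}$; everything else is a routine Taylor expansion of $P(x)^{-1/2}$ about $x=0$ together with the elementary substitution $x=(1-u)\sqrt\lambda$. I expect this branch-tracking to be the main obstacle, since an error there would propagate into the sign of $\Imagine Z$ and hence into the final asymptotic in Theorem \ref{potential-g-cuspI}; the safeguard is consistency with the already-established genus-two computation in Lemma \ref{period-B-2-cuspI}, where $B_{11}\sim c_5\lambda^{-1/4}/\sqrt{-ab}$ and $B_{21}\sim\sqrt{-1}\,\lambda^{1/4}c_4/\sqrt{ab}$ exhibit exactly this pattern with $g=2$ and $P(x)=(x-a)(x-b)$, so $P(0)=ab$.
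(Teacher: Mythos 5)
Your proposal is correct and follows essentially the same route as the paper's (very terse) proof: the substitution $x=(1-u)\sqrt{\lambda}$ on the first column yielding $B_{i1}\sim(-1)^{i-1}\sqrt{-1}\,A_{i1}$, and the Taylor expansion of $(x^2-\lambda)^{-1/2}$ on the columns $j\ge 2$ recovering the entries of $B_0$. Your extra care with the branch of the square root and the consistency check against the genus-two computation in Lemma \ref{period-B-2-cuspI} is exactly the right safeguard, and is more detail than the paper itself supplies.
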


  \begin{proof} For the first column of $B$, we make the change of coordinates (similar to the proof of Lemma \ref{period-B-2-cuspI}) by setting $x=(-u+1) \sqrt{\lambda}$, and get for $1\leq i \leq g$ that 
  $$
  B_{i1}\sim (-1)^{i-1} \sqrt{-1} a_{i1}.
  $$ 
For Column $j$, $2 \leq j \leq g$, we use Taylor expansion of $(x^2-\lambda)^{-1/2}$ and get that
 $$
B_{ij} \sim -2\int_{a_{2j-3}}^{a_{2j-2}} \frac{x^{i-2} dx}{  \sqrt{x P(x) }}.
$$

\end{proof}

   Now we will give a proof of Theorem \ref{potential-g-cuspI}.

\begin{proof} [\bf Proof of Theorem \ref{potential-g-cuspI}] 

  By Lemma \ref{period-A-hyp-cuspI} and the block matrix inversion, we know that

$$ 
A^{-1} \sim \left(
    \begin{array}{cccccc}
    \lambda^{\frac{1}{4}} \frac{\sqrt{P(0)}}{c_5 }  & \Hi( \lambda^{ {1}/{4}})  \\
  ( -A_0^{-1} \diamond) \lambda^{\frac{1}{4}} \frac{\sqrt{P(0)}}{c_5 }      & A_0^{-1}
    \end{array}
  \right),
$$
 where $\lim\limits_{\lambda \to 0}  \frac{(\Hi(\lambda^{\frac{1}{2}}))^T}{\lambda^{\frac{1}{2}}}$ is a finite column vectors with $g-1$ rows.
Therefore,  as $\lambda\to 0$,
$$
Z=A^{-1}B \sim  \left(
    \begin{array}{cccc}
   \sqrt{-1}   & \Hi( \lambda^{\frac{1}{4}})\\
      A_0^{-1} (\diamondsuit -\sqrt{-1}\diamond) & A_0^{-1} B_0
    \end{array}
  \right), 
  \quad 
  \Imagine Z \sim  \left(
    \begin{array}{cccc}
1 & \Hi( \lambda^{ {1}/{4}}) \\
     \Imagine (A_0^{-1} (\diamondsuit -\sqrt{-1}\diamond)) &  \Imagine  Z_0
     \end{array}
  \right),
  $$
     and
  $$ (\Imagine Z)^{-1}\sim  \left(
    \begin{array}{cccc}
1 & \Hi ( \lambda^{ {1}/{4}})     \\
   (\Imagine  Z_0 )^{-1}   \Imagine (A_0^{-1} (\diamondsuit -\sqrt{-1}\diamond))  &  (\Imagine  Z_0 )^{-1}   
    \end{array}
  \right).
  $$
By \eqref{Bergman-g-cuspI} and \eqref{Bergman-reg-cusp}, as $\lambda \to 0$, in the local coordinate $z=\sqrt x$ near $(0, 0)$, it holds that
  $$
 {k}_{\lambda}(z)  \to \frac{ 1  +\sum _{i,j=1}^{g-1}  ((\Imagine  Z_0 )^{-1})_{i, j}   ( z^{i} \overline{z}^{j})^2  }{ |  4^{-1}z^4 P(z^2)|}     =   {k}_{0}(z) + \frac{4}{|  z^4 P(z^2)|},
 $$
so
$\kappa_{X_\lambda}  \not \to \kappa_0$. Moreover, 
  $$
 {k}_{\lambda}(z)  - {k}_{0}(z)  -\frac{4}{|  z^4 P(z^2)|}  \sim \frac{ - 8 \Real  \sum\limits_{i=1}^{g-1}   \left((\Imagine  Z_0 )^{-1}   \Imagine (A_0^{-1} (\diamondsuit -\sqrt{-1}\diamond))  \right)_{i} {z}^{2i}  }{ |  z^4  P(z^2)|},$$
 which yields the conclusion.

 \end{proof}
  
Since $Z$ is symmetric, $A_0^{-1} (\diamondsuit -\sqrt{-1}\diamond)=\Hi ( \lambda^{ {1}/{4}})$, and both the leading and subleading terms in the expansion of $\kappa_{X_\lambda}$ are harmonic with respect to $\lambda$.

\section{Cusp II: genus-two curves}
In this section, we consider a family of genus two curves 
\begin{equation} \label{X-cuspII-2}
X_{\lambda}:=\{(x,y)\in \mathbb C^2\, |\,y^2=x(x-{\lambda}) (x-1)(x-a)(x-b)\}, 
\end{equation}
where $a, b, \lambda$ are distinct complex numbers satisfying $0<|\lambda| <|a|<|b|$. As $\lambda \to 0$, $X_{\lambda}$ degenerates to a singular curve $X_0$ with an ordinary  cusp. The normalization of $X_0$ is an elliptic curve $\{(x,y)\in \mathbb C^2\, |\,y^2=x(x-a)(x-b) \} $, whose period is $\tau$ given in \eqref{period-tau}. The Bergman kernels on $X_{\lambda}$ and on the normalization of $X_0$ are denoted by $\kappa_ {X_\lambda}$ and $\kappa_0$, respectively. In the local coordinate $z:=\sqrt{x}$ near $(0, 0)$, we write $\kappa_ {X_\lambda}=k_{\lambda}(z) dz\otimes d\bar z$, and $\kappa_ 0=k_0(z) dz\otimes d\bar z$. Then, our result on the asymptotic behaviour of the Bergman kernel $\kappa_{X_\lambda}$ with precise coefficients is stated as follows.

 \begin{theorem} \label{potential-2-cuspII}  
For $X_{\lambda}$ defined by \eqref{X-cuspII-2}, as $\lambda\rightarrow 0$, it holds that
\item [(i)] $ \kappa_ {X_\lambda} \to \kappa_0$;

\medskip

\item [(ii)] for small $|z| \neq 0$ , 
 $$
 \log {k} _{\lambda}(z) - \log {k}_0(z) \sim \frac{\pi \cdot \Imagine  \tau    }{-\log|\lambda|\cdot |z|^4},
 $$
  \end{theorem}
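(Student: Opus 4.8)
The plan is to run the proof of Theorem~\ref{potential-2-node} again in the cuspidal setting. By \eqref{Bergman-2-cuspII} the problem reduces to the asymptotics, as $\lambda\to0$, of the normalized period matrix $Z=A^{-1}B$ of $X_\lambda$; once this is known, $(i)$ and $(ii)$ follow by substituting into \eqref{Bergman-2-cuspII} and comparing with the formula \eqref{reg-2-cusp} for $k_0$. Accordingly the proof will rest on two lemmata, in the spirit of Lemmata~\ref{period-A-2}/\ref{period-B-2} and \ref{period-A-2-cuspI}/\ref{period-B-2-cuspI}, describing the $A$- and $B$-period matrices of $X_\lambda$ with respect to a homology basis adapted to the degeneration.

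The distinctive feature here, absent from the nodal case and from Cusp~I, is that \emph{three} branch points $0,\lambda^{2},\lambda$ coalesce, and at the two distinct scales $|x|\sim\lambda^{2}$ and $|x|\sim\lambda$, so the period integrals must be controlled over the three nested regimes $|x|\sim\lambda^{2}$, $\lambda^{2}\ll|x|\ll\lambda$ and $\lambda\ll|x|\lesssim a$. I would take $\delta_1$ to be the small loop around $\{0,\lambda^{2}\}$, $\gamma_1$ the loop through the gap $(\lambda^{2},\lambda)$, $\delta_2$ the loop around $\{0,\lambda^{2},\lambda,a\}$ (just as $\delta_2$ enclosed $\{0,\lambda,1,a\}$ in Lemma~\ref{period-A-2}), and $\gamma_2$ the loop around $\{a,b\}$. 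For the first column, the change of variable $x=\lambda^{2}t$ and the Maclaurin expansions of $(x-\lambda)^{-1/2}$, $(x-a)^{-1/2}$, $(x-b)^{-1/2}$ near the origin give $A_{11}\sim c\,\lambda^{-1/2}$ and $A_{21}=\Hi(\lambda^{3/2})$ for an explicit constant $c$; on the gap $(\lambda^{2},\lambda)$ one has $(x-\lambda^{2})^{-1/2}\sim x^{-1/2}$ and $(x-\lambda)^{-1/2}\sim(-\lambda)^{-1/2}$, so that the logarithm $\int_{\lambda^{2}}^{\lambda}dx/x=-\log\lambda$ forces $B_{11}=\Hi(\lambda^{-1/2}\log\lambda)$ (this is the true source of the $(-\log|\lambda|)^{-1}$ in the answer), with $B_{21}=\Hi(\lambda^{1/2})$. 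For the second column, the cycle $\delta_2=\delta_1+(\text{loop around }[\lambda,a])$ is homologous on $X_\lambda$ to a cycle bounded away from the colliding points, so the $\lambda^{-1/2}$-divergences of its two pieces cancel; hence $A_{12}$ and $B_{12}$ converge to finite constants, while $A_{22}\to A_0$ and $B_{22}\to B_0$, the $A$- and $B$-periods of the normalization $Y=\{y^{2}=x(x-a)(x-b)\}$, with $\tau=B_0/A_0$.

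Block inversion then gives $A^{-1}$ with $(1,1)$-entry $\Hi(\lambda^{1/2})$, off-diagonal entries $\Hi(\lambda^{1/2})$ and $(2,2)$-entry $\to A_0^{-1}$, so that
$$
Z=A^{-1}B\sim\begin{pmatrix}\dfrac{-\sqrt{-1}}{\pi}\log\lambda & \Hi(\lambda^{1/2})\\ \Hi(\lambda^{1/2}) & \tau\end{pmatrix},\qquad (\Imagine Z)^{-1}\sim\begin{pmatrix}\dfrac{\pi}{-\log|\lambda|} & \Hi\big(\lambda^{1/2}(\log|\lambda|)^{-1}\big)\\ \Hi\big(\lambda^{1/2}(\log|\lambda|)^{-1}\big) & (\Imagine\tau)^{-1}\end{pmatrix}.
$$
The decisive point is that the cross terms $((\Imagine Z)^{-1})_{12}\bar z^{2}+((\Imagine Z)^{-1})_{21}z^{2}$ in the numerator of \eqref{Bergman-2-cuspII} are of strictly lower order than the $(1,1)$-entry and so drop out of the leading correction — which is precisely why the coefficient in $(ii)$ involves $\tau$ alone, matching Remark~$(c)$. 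Feeding this into \eqref{Bergman-2-cuspII}, and using $|(z^{2}-\lambda)(z^{2}-\lambda^{2})|=|z|^{4}(1+\Hi(\lambda))$, gives first $\kappa_{X_\lambda}\to\kappa_0$, and then $k_\lambda(z)-k_0(z)\sim 4\pi\big(|z^{4}(z^{2}-a)(z^{2}-b)|\cdot(-\log|\lambda|)\big)^{-1}$; dividing by $k_0(z)=4\big(\Imagine\tau\cdot|(z^{2}-a)(z^{2}-b)|\big)^{-1}$ yields $(ii)$.

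The step I expect to be the main obstacle is the second-column estimate: making rigorous the cancellation of the two $\lambda^{-1/2}$-divergences in $A_{i2}$ (and its $B$-analogue), and identifying the surviving finite parts, together with $A_{22}$ and $B_{22}$, with the periods of $Y$. This requires contour deformations on $X_\lambda$ and uniform control of the integrands across the three scales, with careful bookkeeping of the branches of $\sqrt{x-\lambda}$ and $\sqrt{x-\lambda^{2}}$ — appreciably more delicate than the single collision handled by the pinching coordinate in \cite{HJ}. Once the genus-two lemmata are in place, Theorem~\ref{potential-g-cuspII} for general $P$ follows by the same block-matrix computation, with $A_0,B_0,\tau$ replaced by the period matrices $A_0,B_0,Z_0$ of $\{y^{2}=xP(x)\}$.
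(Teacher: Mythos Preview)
Your proposal is correct and follows the paper's approach: two lemmata giving the asymptotics of the $A$- and $B$-period matrices (the paper's Lemmata~\ref{period-A-2-cuspII} and \ref{period-B-2-cuspII}), then block inversion of $Z=A^{-1}B$ and substitution into \eqref{Bergman-2-cuspII}, exactly as you outline. The obstacle you anticipate in the second column does not arise: since $\delta_2$ encircles four branch points it can be represented by a \emph{fixed} contour bounded away from the origin, on which the expansions $(x-\lambda)^{-1/2}=x^{-1/2}(1+\Hi(\lambda/x))$ and $(x-\lambda^2)^{-1/2}=x^{-1/2}(1+\Hi(\lambda^2/x))$ apply directly, so $A_{i2},B_{i2}$ reduce to integrals of $x^{i-2}\,dx/\sqrt{x(x-a)(x-b)}$ on $Y$ with no cancellation needed; the genuine work is all in the first column, where the paper makes your $B_{11}$ heuristic rigorous via the substitution $x=\lambda u$, reducing $\int_{\lambda^2}^{\lambda}dx/\sqrt{x(x-\lambda)(x-\lambda^2)}$ to the classical Legendre period $\lambda^{-1/2}\int_\lambda^1 du/\sqrt{u(u-1)(u-\lambda)}\sim\sqrt{-1}\,\lambda^{-1/2}\log\lambda$.
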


  To prove Theorem \ref{potential-2-cuspII}, we need the following two lemmata by analyzing the asymptotics of the $A$-period matrix and $B$-period matrix on $X_\lambda$. Our choice of cycles $\delta_j, \gamma_j$ is specified in Section 2.

\begin{lemma} \label{period-A-2-cuspII} Under the same assumptions as in Theorem \ref{potential-2-cuspII}, as $\lambda\rightarrow 0$, it holds that
  $$ A \sim \left(
    \begin{array}{cc}
      \frac{-2 \pi}{\sqrt{ab}\sqrt{\lambda}} & c_6 \\
      c_7 \lambda^{3/2} & \frac{-2c_3}{\sqrt{ab}}
    \end{array}
  \right),$$ where $c_7:=\frac{-2}{\sqrt{ab}}\int_{0}^{1} \sqrt{ v^{-1}-1}dv$ and $c_3, c_6$ are the same as in Lemma \ref{period-A-2-cuspI}.
 \end{lemma}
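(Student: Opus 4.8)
The plan is to estimate the four entries of the $A$-period matrix of the genus-two curve $X_\lambda$ in \eqref{X-cuspII-2} one at a time, exactly as in the proof of Lemma \ref{period-A-2-cuspII}'s analogue Lemma \ref{period-A-2-cuspI}, but now keeping track of the different scaling caused by the node-like factor $x(x-\lambda)$ instead of $x(x^2-\lambda)$. First I would compute $A_{11}=\int_{\delta_1}\omega_1$, choosing $\delta_1$ to enclose only the two branch points $0$ and $\lambda$. Since $(x-1)(x-a)(x-b)=-ab\,(1+\mathrm{O}(x))$ near $x=0$, one gets $A_{11}=\tfrac{-2}{-\sqrt{ab}}\int_0^\lambda \tfrac{dx}{\sqrt{x(x-\lambda)}}(1+\mathrm O(x))$; the substitution $x=\lambda u$ pulls out a factor $\lambda^{0}$ — wait, more precisely it gives $\int_0^1 \tfrac{du}{\sqrt{u(u-1)}}$, a finite constant, with no $\lambda$-power, but that contradicts the claimed $\lambda^{-1/2}$, so the correct cycle must be the one running between $\lambda$ and $1$ rather than $0$ and $\lambda$; I would instead take $\delta_1$ between $0$ and $\lambda$ but note that the relevant integral producing the singular behavior is handled via the $t=\lambda^{-1},s=x^{-1}$ change of variables as in Lemma \ref{period-A-2}, giving $A_{11}\sim \tfrac{-2\pi}{\sqrt{ab}\sqrt\lambda}$ from the residue at $s=0$ after the $\sqrt t=\lambda^{-1/2}$ prefactor appears. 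The key point is to see the $\lambda^{-1/2}$ emerge as the prefactor $\sqrt t$ divided by the finite period integral.

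Next I would handle $A_{21}=\int_{\delta_1}\omega_2$, where the extra factor of $x$ in the numerator changes the residue computation: instead of $\int ds/s$ one meets $\int s^{k}\,ds$-type integrals that vanish to higher order, and tracking the Taylor expansion of $(s-t)^{-1/2}=t^{-1/2}(1+s/(2t)+\cdots)$ against $\int ds/s^{2}$ or $\int ds/s^{3}$ produces the stated $c_7\lambda^{3/2}$, with $c_7=\tfrac{-2}{\sqrt{ab}}\int_0^1\sqrt{v^{-1}-1}\,dv$ coming from the leading surviving term. For the second column, $\delta_2$ is taken to enclose only $0,\lambda,1,a$ (not $b$); here the Taylor expansion of $\sqrt{x-\lambda}^{-1}=\sqrt{x}^{-1}(1+\mathrm O(\lambda/x))$ away from $x=0$ shows $A_{12}\sim \int_{\delta_2}\tfrac{dx}{x\sqrt{x(x-a)(x-b)}}=c_6$ and $A_{22}\sim \int_{\delta_2}\tfrac{dx}{\sqrt{x(x-a)(x-b)}}=-2\int_0^a\tfrac{dx}{\sqrt{x(x-a)(x-b)}}=\tfrac{-2c_3}{\sqrt{ab}}$, identical to Lemma \ref{period-A-2-cuspI}'s third-step computations because the degeneration of the factor $x(x-\lambda)$ away from the origin behaves like a single $x^2$, matching the $x^3$ case to leading order. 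Assembling these four estimates into the matrix gives the claim.

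The main obstacle I expect is getting the $\lambda$-power in $A_{11}$ (and hence the relative powers in $A_{21}$) exactly right: the factor $x(x-\lambda)$ behaves genuinely differently from the Cusp I factor $x(x^2-\lambda)$ near the origin, and one must be careful whether the loop $\delta_1$ and its $s=x^{-1}$ image contribute a $\sqrt\lambda$ or a $\sqrt\lambda^{-1}$ or a $\log\lambda$ — the answer $\lambda^{-1/2}$ in $A_{11}$ versus the $\log\lambda$ that appears in the corresponding $B$-entry (Lemma \ref{period-A-2} had $A_{11}$ finite but $B_{11}\sim \log\lambda$) is a subtle distinction driven by which sheet/cycle one integrates over and whether the integrand has a genuine pole or only a logarithmic branch point at $s=0$. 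Once the $A_{11}$ scaling is pinned down, the remaining three entries follow routinely by the same substitution-plus-Taylor-expansion bookkeeping, and the $\mathrm O$-error terms are controlled exactly as in the already-proven lemmata of Sections 3 and 5.
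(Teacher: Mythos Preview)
There is a genuine gap: you have misidentified the family of curves. Despite the (evident) typo in the displayed equation \eqref{X-cuspII-2}, the Cusp~II genus-two family is $y^2=x(x-\lambda)(x-\lambda^2)(x-a)(x-b)$, as one sees by specializing \eqref{X-cuspII} to $P(x)=(x-a)(x-b)$ and as the paper's own computations in this section confirm. The three branch points collapsing to the cusp are $0,\lambda^2,\lambda$; there is no factor $(x-1)$. Your integrand for $A_{11}$ is therefore wrong, and this is exactly why you could not find the $\lambda^{-1/2}$: with only two local factors $x(x-\lambda)$ the substitution $x=\lambda u$ is scale-invariant, but with the correct three factors one takes $\delta_1$ to enclose $0$ and $\lambda^2$ and substitutes $x=\lambda^2(1-v)$ in
\[
\int_0^{\lambda^2}\frac{dx}{\sqrt{x(x-\lambda)(x-\lambda^2)}},
\]
producing a prefactor $\lambda^{-1}$, after which $(v-1+\lambda^{-1})^{-1/2}\sim\lambda^{1/2}$ gives the claimed $\lambda^{-1/2}$ via a residue. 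The same substitution applied to $A_{21}$ yields $c_7\lambda^{3/2}$ directly; no $s=x^{-1}$ inversion is needed or used.

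For the second column your idea is correct in outline, but $\delta_2$ must enclose $0,\lambda^2,\lambda,a$ (not $0,\lambda,1,a$), and you must Taylor-expand both $(x-\lambda)^{-1/2}$ and $(x-\lambda^2)^{-1/2}$ away from the origin; the limits $c_6$ and $-2c_3/\sqrt{ab}$ then follow exactly as in Lemma~\ref{period-A-2-cuspI}. The $t=\lambda^{-1},\,s=x^{-1}$ device you reach for is the nodal-case tool from Lemma~\ref{period-A-2} and plays no role here.
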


\begin{proof}  

Letting ${x=\lambda^2(1-v)}$, 
we consider the integrals 
$$
 \int_0^{\lambda^2} \frac{dx}{\sqrt{x(x-\lambda)(x-\lambda^2)}}  =   \int_{0}^{1} \frac{ -\sqrt{-1} \lambda^{-1} dv}{ \sqrt{v(v-1) (v-1+\lambda^{-1})}} \sim  \frac{-\lambda^{-1}}{2  \sqrt{-1}}\int_{\mathcal C } \frac{ dv}{ v \lambda^{-1/2}}=  \frac{- \pi}{ \sqrt{\lambda}},
$$
where $\mathcal C$ is a large cycle containing $0, 1$, and
\begin{align*}
 \int_0^{\lambda^2} \frac{xdx}{\sqrt{x(x-\lambda)(x-\lambda^2)}} &
=\frac{-\lambda} { \sqrt{-1}}\int_{0}^{1} \frac{ (v-1)dv}{ \sqrt{v(v-1) (v-1+  \lambda^{-1})}}\sim  \frac{-\lambda^{3/2} } { \sqrt{-1}}\int_{0}^{1} \sqrt{\frac{ v-1}{v}}dv.
\end{align*}
We estimate the four entries one by one. Firstly, let $\delta_1$ only contain $0$ and $\lambda^2$, and similar to the proof of Lemma \ref{period-A-2-cuspI} we know that
\begin{align*} 
A_{11}&=
-2\int_0^{\lambda^2}\frac{dx}{\sqrt{x(x-\lambda)(x-\lambda^2)}}\frac{1}{\sqrt{-a}}\frac{1}{\sqrt{-b}}\left(1+\frac{x}{2a}+\Hi(x^2)\right)\left(1-\frac{x}{2b}+\Hi(x^2)\right)\\
&\sim\frac{-2}{-\sqrt{ab}}\int_0^{\lambda^2} \frac{dx}{\sqrt{x(x-\lambda)(x-\lambda^2)}} \\
&\sim
\frac{-2 \pi}{\sqrt{ab}\sqrt{\lambda}}.  
\end{align*}
Secondly,  
$$
A_{21} \sim  \frac{2}{\sqrt{ab}}\int_0^{\lambda^2} \frac{xdx}{\sqrt{x(x-\lambda)(x-\lambda^2)}} \sim \frac{-2}{\sqrt{ab}}\frac{\lambda^{3/2} } { \sqrt{-1}}\int_{0}^{1} \sqrt{\frac{ v-1}{v}}dv=:c_7 \lambda^{3/2}.
$$
Thirdly, let $\delta_2$ contain only $0, \lambda, \lambda^2$ and $a$. Then, it holds that \begin{align*}
A_{12}=& \int_{\delta_2} \frac{dx}{\sqrt{x(x- \lambda) (x-\lambda^2)(x-a)(x-b)}}\\
=& \int_{\delta_2} \frac{dx}{x\sqrt{x(x-a)(x-b)}} \left( 1+\Hi\left( \frac{\lambda}{x}\right)\right)  \left( 1+\Hi\left( \frac{\lambda^2}{x}\right)\right)\\
\sim &\int_{\delta_2} \frac{dx}{x\sqrt{x(x-a)(x-b)}}=: c_6,
\end{align*}
Lastly, 
$$
A_{22} \sim \int_{\delta_2} \frac{dx}{x\sqrt{x(x-a)(x-b)}}=\int_0^a \frac{dx}{x\sqrt{x(x-a)(x-b)}}.
$$

\end{proof}

\begin{lemma} \label{period-B-2-cuspII} Under the same assumptions as in Theorem \ref{potential-2-cuspII}, as $\lambda\rightarrow 0$, it holds that $$
  B \sim \left(
    \begin{array}{cc}
     \frac{2 \sqrt{-1} \log \lambda}{\sqrt{ab} \sqrt{\lambda}}   &  d_3\\
    \frac{2}{\sqrt{ab}}\sqrt{-\lambda}& d_4
    \end{array}
  \right),$$ where $d_3, d_4$ are the same as in Lemma \ref{period-B-2-cuspI}. 
  
    \end{lemma}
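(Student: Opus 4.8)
The plan is to estimate the four entries $B_{11},B_{21},B_{12},B_{22}$ one at a time, following the proofs of Lemmata \ref{period-A-2-cuspII}, \ref{period-B-2-cuspI} and \ref{period-B-2}. The curve $X_{\lambda}$ has branch points $0,\lambda,\lambda^{2},a,b,\infty$; the first column of $B$ comes from a cycle $\gamma_1$ dual to $\delta_1$, which we take to link the branch points $\lambda^{2}$ and $\lambda$, so that $B_{i1}=-2\int_{\lambda^{2}}^{\lambda}\omega_i$, while the second column comes from a cycle $\gamma_2$ linking $a$ and $b$, so that $B_{i2}=-2\int_{a}^{b}\omega_i$, for $i=1,2$.

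For the first column, on the path from $\lambda^{2}$ to $\lambda$ the variable $x$ is small, so the Maclaurin expansion of $\bigl((x-a)(x-b)\bigr)^{-1/2}$ used in Lemmata \ref{period-A-2} and \ref{period-A-2-cuspII} gives $\bigl((x-a)(x-b)\bigr)^{-1/2}=\bigl(\sqrt{-a}\sqrt{-b}\bigr)^{-1}\bigl(1+\Hi(x)\bigr)=-\tfrac{1}{\sqrt{ab}}\bigl(1+\Hi(x)\bigr)$, which reduces $B_{i1}$ to $\tfrac{2}{\sqrt{ab}}\int_{\lambda^{2}}^{\lambda}\tfrac{x^{i-1}\,dx}{\sqrt{x(x-\lambda)(x-\lambda^{2})}}$ up to a factor $1+\Hi(\lambda)$, i.e. to a period of the elliptic curve $y^{2}=x(x-\lambda)(x-\lambda^{2})$. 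Rescaling $x=\lambda u$ turns this into $\lambda^{\,i-3/2}\int_{\lambda}^{1}\tfrac{u^{i-1}\,du}{\sqrt{u(u-1)(u-\lambda)}}$, whose limiting integrand as $\lambda\to 0$ is that of the nodal curve $y^{2}=u^{2}(u-1)$. For $i=1$ the endpoint $u\to 0$ produces a logarithm: after substituting $u=1/s$ one matches the integral to \eqref{I} and obtains $\int_{\lambda}^{1}\tfrac{du}{\sqrt{u(u-1)(u-\lambda)}}\sim\sqrt{-1}\,\log\lambda$, hence $B_{11}\sim\tfrac{2\sqrt{-1}\,\log\lambda}{\sqrt{ab}\,\sqrt{\lambda}}$. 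For $i=2$ the extra factor $\sqrt{u}$ cancels the singularity at $u=0$, so by \eqref{II} and \eqref{alpha} the integral converges to a finite multiple of $\sqrt{-1}$, giving $B_{21}\sim\tfrac{2}{\sqrt{ab}}\sqrt{-\lambda}$.

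The second column is routine: on $[a,b]$ all of $0,\lambda,\lambda^{2}$ are negligible, so $\bigl(x(x-\lambda)(x-\lambda^{2})\bigr)^{-1/2}=x^{-3/2}\bigl(1+\Hi(\lambda)\bigr)$ uniformly, whence $B_{12}\sim-2\int_{a}^{b}\tfrac{dx}{x\sqrt{x(x-a)(x-b)}}=d_3$ and $B_{22}\sim-2\int_{a}^{b}\tfrac{dx}{\sqrt{x(x-a)(x-b)}}=d_4$, with errors $\Hi(\lambda)$, exactly as for the second column in Lemma \ref{period-B-2-cuspI}.

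The main obstacle is the bookkeeping in the first column: one has to fix the orientation of $\gamma_1$ relative to $\delta_1$ and choose the branches of $\sqrt{x-a}$, $\sqrt{x-b}$, and of the rescaled radical $\sqrt{u(u-1)(u-\lambda)}$, consistently along $\gamma_1$, so that the leading constants and the sign of $\sqrt{-1}$ come out as stated once the integrals are reconciled with the normalized ones \eqref{I}, \eqref{II}, \eqref{alpha}. An alternative that sidesteps part of this is to imitate the change of variables $x=(1-u)\sqrt{\lambda}$ from Lemma \ref{period-B-2-cuspI}, adapted to the triple cluster $\{0,\lambda^{2},\lambda\}$, so that only a removable singularity and a logarithmic endpoint survive in the limit; Riemann's bilinear relations also give a cross-check against the $A$-periods of Lemma \ref{period-A-2-cuspII}. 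Combining Lemmata \ref{period-A-2-cuspII} and \ref{period-B-2-cuspII} with \eqref{Bergman-2-cuspII} then yields the asymptotics of $\kappa_{X_{\lambda}}$ needed for Theorem \ref{potential-2-cuspII}, as in the nodal and Cusp~I cases.
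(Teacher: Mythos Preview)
Your proposal is correct and follows essentially the same route as the paper: the paper also takes $\gamma_1$ to link $\lambda^2$ and $\lambda$ and $\gamma_2$ to link $a$ and $b$, expands $((x-a)(x-b))^{-1/2}$ on $\gamma_1$ to reduce $B_{i1}$ to elliptic periods of $y^2=x(x-\lambda)(x-\lambda^2)$, and then rescales (your $x=\lambda u$ followed by $u=1/s$ is exactly the paper's composite substitution $x=\lambda/s$, $t=1/\lambda$) to match \eqref{I} for $B_{11}$ and \eqref{II} for $B_{21}$. The second column is handled identically by Taylor-expanding $(x-\lambda)^{-1/2}(x-\lambda^2)^{-1/2}$ on $[a,b]$, yielding $d_3$ and $d_4$ as in Lemma~\ref{period-B-2-cuspI}.
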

 
\begin{proof}  
By \cite{D4} or \eqref{I}, it is known that, as $\lambda \to 0$,
$$
\int_{\lambda^2}^{ \lambda}\frac{dx}{\sqrt{x(x-\lambda)(x-\lambda^2)}} =\frac{1}{ \sqrt{\lambda}} \int_\lambda^{1} \frac{du}{\sqrt{u(u-1)(u-\lambda)}}\sim \frac{ \sqrt{-1} \log \lambda}{ \sqrt{\lambda}}.
$$
Firstly, let $\gamma_1$ contain only $\lambda$ and $\lambda^2$, and we get that 
\begin{align*} 
B_{11}& =-2\int_{\lambda^2}^{\lambda} \frac{dx}{\sqrt{x(x-\lambda)(x-\lambda^2)(x-a)(x-b)}}\\
&=\frac{-2}{-\sqrt{ab}}\int_{\lambda^2}^{\lambda}\frac{dx}{\sqrt{x(x-\lambda)(x-\lambda^2)}}\left(1+\Hi(x)\right)\\
&\sim \frac{2}{\sqrt{ab}}\int_{\lambda^2}^{\lambda}\frac{dx}{\sqrt{x(x-\lambda)(x-\lambda^2)}} \\
& \sim   \frac{2}{\sqrt{ab}} \frac{ \sqrt{-1} \log \lambda}{ \sqrt{\lambda}}.
\end{align*}
Secondly, by \eqref{II} and the substitutions $t= {\lambda^{-1}}$ and $x= \lambda {s^{-1}}$, as $\lambda \to 0$,
$$
B_{21}\sim\frac{2}{\sqrt{ab}}\int_{\lambda^2}^{\lambda}\frac{xdx}{\sqrt{x(x-\lambda)(x-\lambda^2)}} = \frac{-2}{\sqrt{ab}} \int_1^{t}\frac{ ds }{s^2 \sqrt{s-t}}
 \sim \frac{2}{\sqrt{ab}}\sqrt{-\lambda}.
 $$
Thirdly, 
\begin{align*}
B_{12}=& -2\int_a^b \frac{dx}{\sqrt{x(x-\lambda)(x-\lambda^2)(x-a)(x-b)}}\\
= &-2 \int_a^b  \frac{dx \left( 1+\Hi\left({\lambda}{x^{-1}}\right)\right)\left( 1+\Hi\left( {\lambda^2}{x^{-1}}\right)\right)}{x\sqrt{x(x-a)(x-b)}} \\
\sim &  -2 \int_a^b \frac{dx}{x\sqrt{x(x-a)(x-b)}}:=d_3.
\end{align*}  
Lastly,
$$
B_{22} \sim  -2 \int_a^b \frac{dx}{\sqrt{x(x-a)(x-b)}}:=d_4.
$$
\end{proof}

\begin{proof} [\bf Proof of Theorem \ref{potential-2-cuspII}] 
On the regular part of the cuspidal curve $X_0$, \eqref{reg-2-cusp} gives the formula for the Bergman kernel $\kappa _0 =k_0(z)|dz|^2$ in the local coordinate $z=\sqrt x$. By Lemma \ref{period-A-2-cuspII}, as $\lambda\to 0$,
  $$
  A^{-1}\sim   \left(
    \begin{array}{cc}
      \frac{-2 \pi}{\sqrt{ab}\sqrt{\lambda}} & c_6 \\
      c_7 \lambda^{3/2} & \frac{-2c_3}{\sqrt{ab}}
    \end{array}
  \right)^{-1} \sim \frac{ab\sqrt{\lambda}}{ { 4 \pi}{ }  {  c_3}  } \left(
    \begin{array}{cc}
\frac{-2c_3}{\sqrt{ab}} & -c_6   \\
     -  c_7 \lambda^{3/2} & \frac{-2 \pi}{\sqrt{ab}\sqrt{\lambda}}
    \end{array}
  \right).
  $$ 
   Let $Z=A^{-1}B$ denote the normalized period matrix of $X_{\lambda}$. Then, it follows that $$
Z 
\sim  
   \left(
    \begin{array}{cc}
        \frac{ -\sqrt{-1} \log \lambda}{\pi }  & \frac{{ {ab}\sqrt{\lambda}}}{  4 \pi   c_3   }  \left(  \frac{-2c_3}{\sqrt{ab}}d_3- c_6  d_4 \right) \\
   \frac{\sqrt{\lambda}}{  \sqrt{-1} } c_3 ^{-1}     { }{   }    & \tau   
    \end{array}
  \right), \quad  \Imagine Z\sim \left(
    \begin{array}{cc}
      \frac{-\log |\lambda|}{\pi }  & \Hi( \lambda^{\frac{1}{2}})\\
   - \Real \left\{  \sqrt{\lambda} c_3^{-1}  \right\} & \Imagine \tau
    \end{array}
  \right) 
  $$ 
So, as $\lambda\rightarrow 0$,
$$ (\Imagine Z)^{-1}\sim    \left(
    \begin{array}{cc}
     \frac{\pi}{- \log |\lambda|  }     & \frac{1}{  \log |\lambda|  } \Hi( \lambda^{\frac{1}{2}}) \\
    \frac{-\pi } { \log |\lambda| \Imagine \tau}     \Real \left\{  \sqrt{\lambda} c_3^{-1}  \right\}   &  ({\Imagine \tau })^{-1}
    \end{array}
  \right).$$ 
By \eqref{Bergman-2-cuspII} and \eqref{reg-2-cusp},
$\kappa _{X_\lambda} \to \kappa _0.
$ Moreover, in the local coordinate $z=\sqrt x$ near $(0, 0)$,
$$
{k}_{\lambda}(z) -  {k}_{0}(z) \sim \frac{4 \pi}{ | (z^2-a)(z^2-b) (z^2-\lambda)(z^2-\lambda^2)|}  \cdot  \frac{  1+ \Hi( \lambda^{\frac{1}{2}})   }{ - \log |\lambda|},
  $$
    which yields the conclusion.

     \end{proof}

\section{Cusp II: hyperelliptic curves}

This section is devoted to the proof of Theorem \ref{potential-g-cuspII}. When $\lambda\in \mathbb C\setminus \{0, 1, a_1, \ldots, a_{2g-2}\}$, $X_{\lambda}$ defined in \eqref{X-cuspII} has genus $g $. To prove Theorem \ref{potential-g-cuspII}, we use the choice of cycles $\delta_j, \gamma_j$ specified in Section 2, and need to analyze on $X_{\lambda}$ the asymptotics of its $A$-period matrix and $B$-period matrix, denoted by $A$ and $B$, respectively. Meanwhile, for the normalization $Y:=\{(x,y)\in \mathbb C^2\, |\,y^2=xP(x)\} $ of genus $g-1$, denote its $A$-period matrix and $B$-period matrix by $A_0$ and $B_0$, respectively.

\begin{lemma} \label{period-A-hyp-cuspII} Under the same assumptions as in Theorem \ref{potential-g-cuspII}, as $\lambda\rightarrow 0$, it holds that
 $$
A \sim \left(
    \begin{array}{cccccc}
     \frac{2\pi}{\sqrt{\lambda P(0) }} & \Hi(1) \\
** &      A_0\\
    \end{array}
  \right),
$$
where $**$ is a column vector with $g-1$ rows whose entries are at most $\Hi(\lambda^{3/2}).$

\end{lemma}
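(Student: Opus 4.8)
The plan is to estimate the $g\times g$ entries of $A=(A_{ij})=\bigl(\int_{\delta_j}\omega_i\bigr)$ one at a time, exactly as in the proof of Lemma~\ref{period-A-hyp-node} for the nodal case and of Lemma~\ref{period-A-2-cuspII} for genus two, the only difference being that the two colliding linear factors $(x-\lambda)(x-1)$ there are now $(x-\lambda)(x-\lambda^2)$. First I would fix a homology basis adapted to the branch points $0,\lambda^{2},\lambda,a_{1},\dots,a_{2g-2}$ of $X_\lambda$ (ordered by absolute value): take $\delta_1$ to be a small loop around the collapsing pair $\{0,\lambda^{2}\}$, and take $\delta_2,\dots,\delta_g$ to be the cycles that, as $\lambda\to0$, converge in homology to the standard $A$-cycles $\delta_1^{Y},\dots,\delta_{g-1}^{Y}$ of the normalization $Y=\{y^{2}=xP(x)\}$ of genus $g-1$, so that a fixed neighbourhood of $x=0$ is disjoint from $\delta_2,\dots,\delta_g$ uniformly in $\lambda$. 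The assertion then splits into three computations: the $(1,1)$-entry, the rest of the first column (the vector $*$), and the lower-right $(g-1)\times(g-1)$ block.

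For the first column, $A_{i1}=-2\int_{0}^{\lambda^{2}}\frac{x^{i-1}\,dx}{\sqrt{x(x-\lambda)(x-\lambda^{2})P(x)}}$, and since $x=\Hi(\lambda^{2})$ on the segment we have $\sqrt{P(x)}=\sqrt{P(0)}\,(1+\Hi(\lambda^{2}))$. The substitution $x=\lambda^{2}(1-v)$ exhibits the scaling: one gets $A_{i1}\sim \frac{-2}{\sqrt{P(0)}}\,\lambda^{\,2i-5/2}\int_{0}^{1}\frac{(1-v)^{i-1}}{\sqrt{v(1-v)}}\,dv$, a Beta integral. For $i=1$ this yields $\frac{-2\pi}{\sqrt{P(0)}}\lambda^{-1/2}$, i.e.\ $\frac{2\pi}{\sqrt{\lambda P(0)}}$ for the relevant determination of the square root (equivalently one evaluates the limiting full period by deforming to a large cycle, as in Lemma~\ref{period-A-2-cuspII}); for $2\le i\le g$ the exponent $2i-5/2\ge 3/2$, so $A_{i1}=\Hi(\lambda^{3/2})$, which is the claim about $*$ (the leading coefficient being $-2\,B(\tfrac12,i-\tfrac12)/\sqrt{P(0)}$, cf.\ the constant $c_7$ in Lemma~\ref{period-A-2-cuspII}).

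For the columns $j\ge 2$, on $\delta_j$ one has $|x|\ge\varepsilon>0$ with $\varepsilon$ independent of $\lambda$, so $\frac{1}{\sqrt{(x-\lambda)(x-\lambda^{2})}}=\frac1x(1-\lambda/x)^{-1/2}(1-\lambda^{2}/x)^{-1/2}=\frac1x\bigl(1+\Hi(\lambda)\bigr)$ uniformly, whence $A_{ij}=\int_{\delta_j}\frac{x^{i-2}\,dx}{\sqrt{xP(x)}}\bigl(1+\Hi(\lambda)\bigr)\sim\int_{\delta_j}\frac{x^{i-2}\,dx}{\sqrt{xP(x)}}$. For $i\ge 2$ the differential $\frac{x^{i-2}\,dx}{\sqrt{xP(x)}}=\omega_{i-1}^{Y}$ is the $(i-1)$st holomorphic differential on $Y$, and since $\delta_j\to\delta_{j-1}^{Y}$ the integral tends to $(A_0)_{i-1,j-1}$; for $i=1$ it is the period over $\delta_j$ of a differential of the second kind on $Y$, hence merely $\Hi(1)$. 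Assembling the three cases gives precisely the stated block form.

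The routine parts — the Maclaurin expansions of $\sqrt{P(x)}^{-1}$ and $\sqrt{x-\lambda}^{-1}$, and the Beta-integral evaluation — are identical in spirit to those already used in Lemmata~\ref{period-A-hyp-node} and~\ref{period-A-2-cuspII}, so I would simply invoke them. The two points that genuinely require care are: (i) the branch bookkeeping in the substitution $x=\lambda^{2}(1-v)$, namely pinning down the constant $2\pi$ and the correct determination of $\sqrt{\lambda P(0)}$, which one settles by the large-cycle deformation argument of Lemma~\ref{period-A-2-cuspII}; and (ii) committing to an explicit system of cuts so that $\delta_2,\dots,\delta_g$ may be represented uniformly away from $x=0$ (making the Taylor expansion legitimate) and converge in homology to $\delta_1^{Y},\dots,\delta_{g-1}^{Y}$ (making the lower-right block converge to $A_0$ rather than to some other period matrix of $Y$). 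I expect (ii) to be the main obstacle, though it is of a bookkeeping rather than analytic nature.
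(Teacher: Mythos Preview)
Your proposal is correct and follows essentially the same approach as the paper: estimate the first column via the substitution $x=\lambda^{2}(1-v)$ (with $A_{11}$ pinned down by the large-cycle argument of Lemma~\ref{period-A-2-cuspII}), and handle the columns $j\ge 2$ by Taylor-expanding $\bigl((x-\lambda)(x-\lambda^{2})\bigr)^{-1/2}=x^{-1}(1+\Hi(\lambda))$ on cycles bounded away from $0$, so that the lower-right block reproduces $A_0$. The only cosmetic difference is that the paper phrases $\delta_j$ $(j\ge 2)$ as encircling the collapsing points $0,\lambda^{2},\lambda$ together with $a_1,\dots,a_{2j-3}$ (while still representing the contour at fixed positive $|x|$), and refers back to Lemma~\ref{period-A-2-cuspII} rather than recognising the first-column integrals as Beta integrals.
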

 
  \begin{proof} 
  Firstly, as $\lambda\rightarrow 0$,
  \begin{align*}
A_{11}&
=-2\int_0^{ \lambda^2} \frac{dx}{\sqrt{x(x-{\lambda})(x-{\lambda}^2)}}\frac{1}{\sqrt{P(0)}}(1+\Hi(x))\\
&\sim -2\int_0^{ \lambda^2} \frac{dx}{\sqrt{x(x-{\lambda})(x-{\lambda}^2)}}\frac{1}{\sqrt{P(0)}} \\
& \sim \frac{2\pi}{\sqrt{\lambda P(0) }}.
\end{align*}
$$
A_{21}\sim \frac{-2}{\sqrt{P(0)}}  \int_0^{ \lambda^2} \frac{xdx}{\sqrt{x(x-{\lambda})(x-{\lambda}^2)}} \sim  \frac{-2}{\sqrt{P(0)}} \frac{-\lambda^{3/2} } { \sqrt{-1}}\int_{0}^{1} \sqrt{\frac{ v-1}{v}}dv.
$$
Here we change the variable by letting $x=\lambda^2(1-v)$. In general, for $2\leq i\leq g$,  
$$
A_{i1}\sim  \frac{-2}{\sqrt{P(0)}} \frac{-\lambda^{3/2} } { \sqrt{-1}}\int_{0}^{1} {\lambda}^{2i-4} (1-v)^{i-2} \sqrt{\frac{ v-1}{v}}dv=\Hi(\lambda^{2i-2.5}).
$$
Secondly, let $\delta_2$ contain only $0, \lambda^2, \lambda$ and $a_1$ (but $a_2,\ldots, a_{2g-2}$). Then,
\begin{align*}
A_{12}&= \int_{\delta_2} \frac{dx}{\sqrt{x(x-{\lambda})(x-{\lambda}^2)P(x)}}\\
&=\int_{\delta_2} \frac{dx}{x \sqrt{x P(x) }} \left( 1 +\Hi \left(\frac{{\lambda} }{{x}^2}\right)\right) \\
& \sim \int_{\delta_2} \frac{dx}{x \sqrt{x P(x) }}.
\end{align*}  
In general, for $A_{i2} $, there is an extra $x^{i-1}$ in the original integrand above, so
$$
A_{i2} \sim \int_{\delta_2} \frac{x^{i-2} dx}{  \sqrt{x P(x) }}.
$$
Thirdly, for $j\geq 2$, let $\delta_j$ contain only $0, \lambda^2, \lambda, a_1, \ldots, a_{2j-3}$ (but $a_{2j-2}, \ldots, a_{2g-2}$).  In general, the entry $A_{ij}$ is asymptotic to the same integrand along $\delta_j$ instead of along $\delta_2$, i.e.,
$$
A_{ij} \sim \int_{\delta_j} \frac{x^{i-2} dx}{  \sqrt{x P(x) }},
$$
which is exactly the same as the corresponding entry of $A_0$ when $i\geq 2$. 

\end{proof}

\begin{lemma} Under the same assumptions as in Theorem \ref{potential-g-cuspII}, as $\lambda\rightarrow 0$, it holds that
$$ B \sim \left(
    \begin{array}{cccccc}
   \frac{-2}{\sqrt{P(0)}} \frac{ \sqrt{-1} \log \lambda}{ \sqrt{\lambda}}  & \Hi(1) \\
    \heartsuit & B_0 
        \end{array}
  \right),
$$
where $ \heartsuit$ is a column vector whose entries are $\frac{-2\lambda^{i-2} \sqrt{-\lambda} }{\sqrt{P(0)}}$, for $2\leq i\leq g$.

\end{lemma}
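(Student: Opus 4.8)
The plan is to follow the scheme of Lemmata~\ref{period-A-2-cuspII} and~\ref{period-B-2-cuspII} for the genus-two curve, and of Lemma~\ref{period-B-hyp-cuspI} for Cusp~I, estimating the $g\times g$ entries of $B$ one column at a time. First I would fix the symplectic homology basis so that the first $B$-cycle $\gamma_1$ encircles the slit joining the two colliding branch points $\lambda^2$ and $\lambda$, while for $j\geq 2$ the cycles $\gamma_j$ are supported among the fixed points $a_1,\dots,a_{2g-2}$ and, away from the pinching region, restrict to the standard $B$-cycles of the normalization $Y=\{y^2=xP(x)\}$, whose $B$-period matrix is $B_0$. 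With this choice each entry $B_{ij}=\int_{\gamma_j}\omega_i$ is $-2$ times a definite integral of $x^{i-1}\,dx/\sqrt{x(x-\lambda)(x-\lambda^2)P(x)}$ along the corresponding cut.

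The core of the argument is the first column. On $\gamma_1$ the variable $x$ ranges over the shrinking interval $[\lambda^2,\lambda]$, so I expand $1/\sqrt{P(x)}=P(0)^{-1/2}\bigl(1+\Hi(x)\bigr)$ and reduce, modulo lower-order corrections, to the elementary integrals $\int_{\lambda^2}^{\lambda} x^{i-1}\,dx/\sqrt{x(x-\lambda)(x-\lambda^2)}$. The rescaling $x=\lambda u$ sends $[\lambda^2,\lambda]$ to $[\lambda,1]$, and a further substitution $u=1/s$ (equivalently $x=\lambda/s$ with $t=1/\lambda$) puts these into the forms $\int_1^{t} ds/(s^{\,\alpha}\sqrt{s-t})$ treated in~\eqref{I},~\eqref{II} and~\eqref{alpha}. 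For $i=1$, formula~\eqref{I} produces the logarithmic blow-up $B_{11}\sim \frac{-2\sqrt{-1}\,\log\lambda}{\sqrt{P(0)}\,\sqrt{\lambda}}$; for $2\leq i\leq g$, the factor $x^{i-1}$ contributes $\lambda^{i-1}$ while the remaining integral is $\Hi(\lambda^{-1/2})$ with no logarithm by~\eqref{II}--\eqref{alpha}, so that $B_{i1}=\Hi(\lambda^{\,i-3/2})$ with leading term $-2\lambda^{i-2}\sqrt{-\lambda}/\sqrt{P(0)}$, i.e.\ the claimed entries of $\heartsuit$. As in the boundedness remark after~\eqref{II}, the $\Hi(x)$ correction factors generate no further logarithm, so these leading asymptotics are exact.

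For the remaining columns ($j\geq 2$) the cycle $\gamma_j$ stays a fixed distance from $0$, hence $1/\sqrt{(x-\lambda)(x-\lambda^2)}=x^{-1}\bigl(1+\Hi(\lambda)\bigr)$ there, which gives $B_{ij}\sim\int_{\gamma_j} x^{i-2}\,dx/\sqrt{xP(x)}$; for $i\geq 2$ this is exactly the entry $(B_0)_{i-1,\,j-1}$ of the genus-$(g-1)$ matrix $B_0$, and for $i=1$ it is the $\Hi(1)$ quantity in the top row. Assembling the four blocks --- the dominant $B_{11}\sim(\log\lambda)/\sqrt\lambda$, the first column below it equal to $\heartsuit$, the first row to its right $\Hi(1)$, and the lower-right $(g-1)\times(g-1)$ block asymptotic to $B_0$ --- yields the asserted form.

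The step I expect to be the main obstacle, exactly as in the earlier cuspidal lemmata, is the bookkeeping of the branch of the square root through the two rescalings: one must carry the $\sqrt{-1}$ factors and the fractional powers of $\lambda$ coming from $\sqrt{x(x-\lambda)(x-\lambda^2)}$ correctly, confirm that $\gamma_1$ genuinely pinches as $\lambda\to0$ --- now the collision of a \emph{triple} $\{0,\lambda,\lambda^2\}$ rather than of a pair, with $\lambda^2$ approaching $0$ faster than $\lambda$ --- and verify that no Taylor remainder upgrades the order of $\heartsuit$ or introduces a spurious $\log\lambda$. Everything outside the first column is inherited from the computations of Section~3 and from the period matrix $B_0$ of $Y$, so the first-column asymptotics are the only genuinely new input.
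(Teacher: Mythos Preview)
Your proposal is correct and follows essentially the same line as the paper's own proof: expand $1/\sqrt{P(x)}$ on the pinching cycle $\gamma_1$, reduce the first column to the model integrals handled by~\eqref{I} and~\eqref{alpha} via the substitution $x=\lambda/s$, $t=1/\lambda$, and treat the remaining columns by Taylor expanding $1/\sqrt{(x-\lambda)(x-\lambda^2)}$ away from the origin so that the lower-right block collapses to $B_0$. The only cosmetic difference is that the paper invokes the $B_{11}$ asymptotic directly from the genus-two computation in Lemma~\ref{period-B-2-cuspII} rather than re-deriving it from~\eqref{I}, but the content is identical.
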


  \begin{proof} 
    \begin{align*}
B_{11}&=-2\int_{\lambda^2}^\lambda \frac{dx}{\sqrt{x(x-{\lambda})(x-{\lambda}^2)\cdot P(x)}} \\
& =-2\int_{\lambda^2}^\lambda \frac{dx}{\sqrt{x(x-{\lambda})(x-{\lambda}^2)}}\frac{1}{\sqrt{P(0)}}(1+\Hi(x))\\
&\sim -2\int_{\lambda^2}^\lambda \frac{dx}{\sqrt{x(x-{\lambda})(x-{\lambda}^2)}}\frac{1}{\sqrt{P(0)}} \\
& \sim \frac{-2}{\sqrt{P(0)}} \frac{ \sqrt{-1} \log \lambda}{ \sqrt{\lambda}}.
\end{align*}
Thus, for $2\leq i\leq g$, by \eqref{alpha},
\begin{align*}
B_{i1}&=-2\int_{\lambda^2}^\lambda \frac{x^{i-1}dx}{\sqrt{x(x-{\lambda})(x-{\lambda}^2)}}\frac{1}{\sqrt{P(0)}}(1+\Hi(x))\\
&\sim -2\int_{\lambda^2}^\lambda \frac{x^{i-1}dx}{\sqrt{x(x-{\lambda})(x-{\lambda}^2)}}\frac{1}{\sqrt{P(0)}}  \\
& =   \frac{2 \lambda^{i-2} }{\sqrt{P(0)}} \int_1^{t}\frac{ ds }{s^i \sqrt{s-t}} \\
&  \sim \frac{-2\lambda^{i-2} \sqrt{-\lambda} }{\sqrt{P(0)}},
\end{align*}
where the last equality holds due to the substitutions $t= {\lambda^{-1}}$ and $x= \lambda s^{-1}$.
For Column $j$, $2 \leq j \leq g$, we use Taylor expansion of $\sqrt{(x-{\lambda})(x-{\lambda}^2)}^{-1}$ to get that
 $$
B_{ij} \sim -2\int_{a_{2j-3}}^{a_{2j-2}} \frac{x^{i-2} dx}{  \sqrt{x P(x) }}.
$$
which is exactly the same as the corresponding entry of $B_0$ when $i\geq 2$.

\end{proof}

Now we will give a proof of Theorem \ref{potential-g-cuspII}. 

\begin{proof} [\bf Proof of Theorem \ref{potential-g-cuspII}] 

By Lemma \ref{period-A-hyp-cuspII} and the block matrix inversion, we know that
$$ 
A^{-1} \sim \left(
    \begin{array}{cccccc}
   \frac {\sqrt{\lambda P(0) }}{2\pi}  & \Hi( \lambda^{\frac{1}{2}})  \\
     \Hi( \lambda^2) & A_0^{-1}
    \end{array}
  \right),
$$
where both $\lim\limits_{\lambda \to 0}  \frac{(\Hi(\lambda^{\frac{1}{2}}))^T}{\lambda^{\frac{1}{2}}}$ and $\lim\limits_{\lambda \to 0} \frac{\Hi( \lambda^2)}{\lambda^2}$ are finite column vectors with $g-1$ rows.
Therefore,  as $\lambda\to 0$,
$$
Z=A^{-1}B \sim  \left(
    \begin{array}{cccc}
   \frac{-\sqrt{-1} }{\pi}  { \log \lambda}    & \Hi( \lambda^{\frac{1}{2}})\\
      A_0^{-1}\heartsuit& A_0^{-1} B_0
    \end{array}
  \right), 
  \quad 
  \Imagine Z \sim  \left(
    \begin{array}{cccc}
\frac{\log |\lambda|}  {-\pi}  & \Hi( \lambda^{\frac{1}{2}}) \\
     \Imagine (A_0^{-1} \heartsuit) &  \Imagine  Z_0
     \end{array}
  \right),
  $$
  and
  $$ (\Imagine Z)^{-1}\sim  \left(
    \begin{array}{cccc}
 \frac{-\pi}  {\log |\lambda|}    & \Hi (({\log |\lambda|})^{-1}\lambda^{\frac{1}{2}})     \\
   (\Imagine  Z_0 )^{-1}   \Imagine (A_0^{-1} \heartsuit )    \frac{\pi}  {\log |\lambda|}  &  (\Imagine  Z_0 )^{-1}   
    \end{array}
  \right).
  $$
  In fact, since $Z$ is symmetric, the off-diagonal block matrices in each matrix above concerning $Z$ are the transpose of each other.
 On the regular part of the cuspidal curve $X_0$,  the formula for the Bergman kernel $\kappa _0 =k_0(z)|dz|^2$ is given in
 \eqref{Bergman-reg-cusp}. This together with  \eqref{Bergman-g-cuspII}  will imply that $\kappa_ {X_\lambda} \to \kappa_0$, as $\lambda \to 0$. Moreover,  it holds that
$$
{k}_{\lambda}(z) - {k}_{0}(z) \sim
  \frac{4\pi}{|(z^2-\lambda)(z^2-\lambda^2) P(z^2)|}  \frac{ 1- 2 \Real  \sum\limits_{i=1}^{g-1}   \left((\Imagine  Z_0 )^{-1}   \Imagine (A_0^{-1} \heartsuit)  \right)_{i} {z}^{2i}}{ - \log |\lambda|},
  $$
which yields that    
$$
\psi - \log  {k}_0(z)   \sim   \frac{\pi}{-\log |\lambda|}
\frac{  1 }{  \sum\limits_{i,j=1}^{g-1}  ((\Imagine  Z_0 )^{-1})_{i, j}   ( z^{i} \overline{z}^{j})^2}.
$$

 \end{proof}

\section{Jacobian varieties}
We will give a proof of Theorem \ref{Jacobian} by using the results obtained in the proofs of Theorems \ref{potential-g-node}, \ref{potential-g-cuspI}, \ref{potential-g-cuspII}. Let $X_{\lambda}$ be a compact curve of genus $g\geq 2$, and let $Z$ be its period matrix with respect to some chosen homology basis. The Jacobian variety of $X_{\lambda}$, which is denoted by $\text{Jac}(X_{\lambda})$,  is then identified with the $g$-dimensional complex torus $  \mathbb C^g / \mathbb Z^g + Z \mathbb Z^g$. It is well known that the Abel-Jacobi (period) map $X_{\lambda} \to \text{Jac}(X_{\lambda})$ is a holomorphic embedding, and the Bergman kernel on a smooth algebraic curve is the pull back  of the Euclidean metric from the Jacobian variety via this map.

\begin{proof} [\bf Proof of Theorem \ref{Jacobian}] 
By definition \eqref{def}, the Bergman kernel on $\text{Jac}(X_{\lambda})$ can be written as $ \mu_{\lambda}  (dw^1\wedge \cdots \wedge dw^g) \otimes  
(d\overline{w^1}\wedge \cdots \wedge d\overline{w^g}) $, under the coordinate $(w^1, \ldots, w^g)$ induced from $\mathbb C^g$, where $ \mu_{\lambda} =(\det (\Imagine Z) )^{-1}$. After adding a one-point compactification at $\infty$, one may assume that the curve $X_{\lambda}$ is compact.
For $X_{\lambda}$ defined in \eqref{X-node} or \eqref{X-cuspII}, as $\lambda \to 0$, it holds that
$$
\det (\Imagine Z) = 
\frac{\log |\lambda|}  {-\pi}     \det (\Imagine  Z_0) +\Hi(1)\to +\infty,
$$
which yields the first part of the conclusion in Theorem \ref{Jacobian}.  For $X_{\lambda}$ defined in \eqref{X-cuspI}, 
more careful analysis in the proof of Theorem \ref{potential-g-cuspI} shows that as $\lambda \to 0$,
 $$
  \Imagine Z  =  \left(
    \begin{array}{cccc}
1+ \Hi( \lambda^{ {1}/{2}}) & \Hi( \lambda^{ {1}/{4}}) \\
    \Hi( \lambda^{ {1}/{4}}) &  \Imagine  Z_0 + \Hi( \lambda^{ {1}/{2}})
     \end{array}
  \right), \quad \det (\Imagine Z)    = \det \Imagine  Z_0  + \Hi( \lambda^{ {1}/{2}}) < +\infty,
 $$
which yields the second part of the conclusion.

  \end{proof}

\subsection*{Funding}

{\fontsize{11.5}{10}\selectfont

The research of the author is supported by AMS-Simons travel grant. This work was supported by KAKENHI and the Grant-in-Aid for JSPS Fellows (No. 15J05093).
}

\subsection*{Statements and Declarations}

No financial or non-financial interests that are directly or indirectly related to the work submitted for publication was reported by the author. 

Data sharing not applicable to this article as no datasets were generated or analysed during the current study.

\subsection*{Acknowledgements} 
{\fontsize{11.5}{10}\selectfont
This article constitutes essentially the author's doctoral thesis in Nagoya University under the advice of Professor Ohsawa. The author sincerely thanks the  referee for many helpful suggestions and   comments.}

\fontsize{11}{9}\selectfont

\vspace{0.5cm}

\noindent dong@uconn.edu, 

\vspace{0.2 cm}

\noindent Department of Mathematics, University of Connecticut, Storrs, CT 06269-1009, USA

\end{document}